      \theoremstyle{plain}
      \newtheorem{theorem}{Theorem}[section]
      \newtheorem{lemma}[theorem]{Lemma}
      \newtheorem{corollary}[theorem]{Corollary}
      \theoremstyle{definition}
      \newtheorem{definition}[theorem]{Definition}   
      \theoremstyle{remark}
      \newtheorem{remark}[theorem]{Remark}		
	  \theoremstyle{proposition}
       \newtheorem{assumption}[theorem]{Assumption}
      \def\@setcopyright{}
      \def\serieslogo@{}
\begin{document}

   \author{Kristina Schubert}
   \address{Institute of Mathematical Statistics,  University of M\"unster, 
             Orl\'{e}ans-Ring 10, 48149 Münster, Germany}
   \email{kristina.schubert@uni-muenster.de}

   \title[Spacings in Orthogonal and Symplectic Random Matrix Ensembles]{Spacings in Orthogonal and Symplectic Random Matrix Ensembles}

   \begin{abstract}
We consider the universality of the nearest neighbour eigenvalue spacing distribution in invariant random matrix ensembles. 
Focussing on orthogonal and symplectic invariant ensembles, we show that the  empirical  spacing distribution converges in a  uniform way. 
 More precisely, the main result states  that the expected Kolmogorov distance of the empirical spacing distribution from its universal limit converges to zero as the matrix size tends to infinity.
  \end{abstract}

   \subjclass[2010]{15B52}

   \keywords{random matrices, invariant ensembles, local eigenvalue statistics, nearest neighbour spacings, universality}

   \maketitle

\setcounter{equation}{0}
\section{Introduction}

The study of random matrices, starting with the works of Wishart \cite{Wish} in the 1920s, began to thrive in the 1950s due to the pioneering  ideas
of Wigner \cite{wigner1}, who suggested  to model energy levels in quantum systems using random matrices.
Ever since, a number of applications was discovered and the field revived in the late 1990s. This was initiated by the major discovery of Baik, Deift and Johansson \cite{TBaDeJo} that the limiting distributions of the length of the longest increasing subsequence of a
random permutation on $N$ letters and the largest eigenvalue of a random $N  \times N$ matrix in a certain model coincide.

In the theory of random matrices a considerable number of \textit{ensembles} has been studied over the last decades, where an ensemble is given by a set of matrices together with a probability measure. The most prominent and well-studied ensembles are the Gaussian ensembles (see e.g.~\cite{AGZ,mehta}), consisting of either real symmetric, complex Hermitian or quaternionic self-dual matrices with independent Gaussian entries (as far as the symmetry permits). These ensembles are characterised by a parameter $\beta$ that takes a  value in $\{1,2,4\}$ for each class of matrices.
 The classical Gaussian ensembles have been generalised in several ways leading e.g.~to the famous Wigner ensembles, where other distributions  than the normal distribution are allowed for the matrix entries. In the past years, local properties of Wigner ensembles were  rigorously studied  by Erd\H{o}s et al.~(see e.g.~\cite{Erdos2} for a survey) and Tao and Vu \cite{TaoVu} in a series of papers.
For a comprehensive overview on current developments in Random Matrix Theory the reader is referred to the recent books \cite{ABF,Forrester_log_gases}. 
 
Another generalisation of the Gaussian ensembles is motivated by the fact that in these ensembles the probability measure on the matrices is invariant under orthogonal, unitary and symplectic transformations respectively, justifying the full names Gaussian Orthogonal, Unitary and  Symplectic Ensembles. In fact, the Gaussian ensembles are, up to scaling, the only ensembles with both this invariance property and   independent matrix entries. Abandoning the requirement of independent matrix entries and only considering invariant measures on the symmetric, Hermitian or self-dual matrices leads to ensembles called  \textit{orthogonal ($\beta=1$), unitary ($\beta=2$)} and \textit{symplectic ($\beta=4$) invariant ensembles}, which we will study here (see e.g.~\cite{deift2} for a general introduction on invariant ensembles).

In Random Matrix Theory, besides global features of the spectrum, one is often interested in  local eigenvalue statistics such as the distribution of the largest eigenvalue or, as considered here, the distribution of spacings of adjacent eigenvalues. 
Of particular interest is the asymptotic behaviour of these quantities as the matrix size tends to infinity.  
The study of local eigenvalue statistics has been  driven by a phenomenon called \textit{universality}, that describes the fact that the limiting distributions of  several of these  statistics do not depend on the details of the ensemble but   on the symmetry class ($\beta=1,2,4$) only.
To obtain such universality results, typically, the first step in the analysis is to express the quantity of interest in terms of $k$-point correlations. 
One of the characteristic features of invariant ensembles is the fact that these $k$-point correlations can in turn be expressed in terms of orthogonal polynomials. Asymptotic results for the correlation functions can then be obtained  by the Fokas-Its-Kitaev  representation of orthogonal polynomials as a Riemann-Hilbert problem \cite{FIK} followed by the Deift-Zhou steepest descent method \cite{DeiftZhou}.
Hence, convergence of the $k$-point correlation functions has been shown  for a number of invariant ensembles. 

While in the literature this convergence is usually shown on compact sets, starting point for the proof of the Main Theorem of this paper is a slightly more refined universality assumption, stating the uniform convergence of the correlation functions on certain sets that grow with the matrix size. As this is what one would expect from the known techniques,  we do not derive such a refined universality result  here, but state it  as an assumption for the Main Theorem. 
Verifying the assumption for invariant ensembles even beyond the Hermite-type ensembles introduced here, makes our  Main Theorem applicable to other ensembles such as  Laguerre and Jacobi-type ensembles as well (see Remark \ref{Remark_Assumption}). 
We note that in this paper convergence of the correlation functions and all derived quantities is always considered in the bulk of the spectrum, i.e.~away from spectral edges, where the limiting correlation functions are determinants of the famous sine kernel. 

Our quantity of interest is the empirical distribution function of spacings of adjacent eigenvalues, often  denoted by \textit{spacing distribution}. 
Here, we consider localized and rescaled eigenvalues $\lambda_1(H) \leq \ldots \leq \lambda_N(H)$ of a $N\times N$ matrix $H$ (we may omit the $H$ in the notation and write $\lambda_i$ instead of $\lambda_i(H)$). This means that before rescaling we restrict our attention to eigenvalues $\widetilde{\lambda_i}(H)$ in intervals $I_N=[a-\delta_N,a+\delta_N]$, $\delta_N \to 0$ for some point $a$ in the bulk of the spectrum. Then we rescale the eigenvalues (afterwards denoted by $\lambda_i(H)$) such that  they have spacings that are close to one on average (details on the rescaling are given in section \ref{sec:2}). 
For finite $N$ the spacing distribution is then given by 
\begin{equation}\label{def_spacing_distr}
F_{N,\beta} (s, H) \coloneqq  \frac{1}{2 N  \delta_N} \# \{ \tilde{\lambda}_{i}(H), \tilde{\lambda}_{i+1}(H) \in I_N : \lambda_{i+1}-\lambda_i \leq s\}.
\end{equation}
Although $F_{N,\beta} 
$ can  be expressed in terms of $k$-point correlation functions, the assumed uniform convergence of the  correlation functions does not directly imply the kind of uniform convergence of the spacing distribution we aim at in our Main Theorem.  More precisely, we show that the expected Kolmogorov distance of the empirical spacing distribution from its universal limit $F_{\beta}$
converges to zero as the matrix size tends to infinity, i.e.
$$\lim_{N \to \infty }\mathbb{E}_{N,\beta} \left(\sup_{s \geq 0} \left|F_{N,\beta}(s) - F_{\beta}(s)\right|\right) =0, \quad \beta=1,2,4.$$ 

An essential step in the proof of the Main Theorem is the point-wise convergence of the expected empirical spacing distribution $\mathbb E_{N,\beta}(F_{N,\beta}(s)) \to F_{\beta}(s)$ for $N \to \infty$ (see Theorem \ref{pointwise} (ii)). For unitary invariant ensembles this was shown in \cite{deift1} and the result was extended to orthogonal and symplectic invariant ensembles in \cite{Alsm_Loewe}. We revisit the arguments of  \cite{Alsm_Loewe} in order to add some error estimates that are used to prove the uniform convergence stated in the Main Theorem.
Both our proof of this point-wise convergence and the rest of the proof of the Main Theorem are inspired by a proof of Katz and Sarnak in \cite{katzsarnak} for an ensemble related to the unitary invariant ensemble ($\beta=2$), called  
circular unitary ensemble. In particular, we can adapt their methods to express the empirical spacing distribution in terms of correlation functions and finally in terms of (matrix) kernel functions $K_{N,\beta}$. The uniform convergence of these  kernels $K_{N,\beta}$ in the region of interest is then assumed for the Main Theorem. 

 In order to prove the Main Theorem, we extend the methods  of \cite{katzsarnak}
 in two ways. On the one hand,  we need to
 introduce a localisation that  is not needed for the ensembles in
 \cite{katzsarnak}. On the other hand, 
  the relation between the kernel functions and the spacing distribution is
 way more involved for orthogonal and symplectic ensembles ($\beta=1$ and
 $\beta=4$) than it is for unitary ensembles. 
 Beyond that, one of the main obstacles in the proof of the Main Theorem are integrability issues concerning the matrix kernels $K_{N,\beta}$ (respectively their large $N$ limits) in the symplectic  case ($\beta=4$).
 
 Similar integrability issues were already encountered by Soshnikov  in \cite{soshnikov} for the circular $\beta=4$ case. 
The proofs in \cite{soshnikov} concentrate on circular unitary ensembles ($\beta=2$) and some of the results are extended to the circular $\beta=1$ case. 
The main result is that for the number of nearest neighbor  spacings smaller than $s$, denoted by $\eta_N(s)$, the process $(\eta(s) - \mathbb E \eta(s))N^{-1/2}$ weakly converges to some Gaussian random process on $[0,\infty).$ As a corollary the results of Katz and Sarnak  \cite{katzsarnak} are sharpened by improving the error estimate and by showing that the Kolmogorov distance of the spacing distribution   converges to zero with probability one while in \cite{katzsarnak}  the convergence of the expected Kolmogorov distance to zero is shown. The main ingredient of the proofs in \cite{soshnikov} is the consideration of the moments $\mathbb E (N^{-1/2}(\eta(s) - \mathbb E \eta(s)))^k$.  
However, as noted by Soshnikov this method does not seem to be applicable in the  $\beta=4$ case because of  the aforementioned integrability issues (see Section 7C in \cite{soshnikov} for an insightful comment on this problem).   
The   method introduced in this work to overcome these integrability issues in the current invariant setting might also be used to calculate $\mathbb E (N^{-1/2}(\eta(s) - \mathbb E \eta(s)))^2$ for circular $\beta=4$ ensembles even though its applicability to higher moments remains an open question.

 We proceed with a presentation of the setting and introduce the empirical
 spacing distribution in the next section. In Section 3 we study the $k$-point correlation functions and the  related matrix kernel functions $K_{N,\beta}$
 and present the Main Theorem together with the corresponding assumption on the kernels concerning their convergence. In
 Section 4 we introduce an auxiliary counting measure $\gamma_N$, which is used to show the pointwise convergence of the expected empirical spacing distribution. Section 5 is devoted to the most challenging estimate of the proof concerning the variance of the auxiliary measure $\gamma_N$. Finally, in Section 6 we combine the results of the previous sections  and complete the proof of the Main Theorem.
%
\section{Random Matrix Ensembles and the  Spacing Distribution}\label{sec:2}
We consider invariant random matrix ensembles with  $\beta =1,2,4$, which are
given by a set of $N \times N$ matrices, either real symmetric, complex Hermitian or quaternionic self-dual, together with a probability measure
$\mathbb{P}_{N,\beta}$. Depending on the set of matrices this probability measure is either invariant under orthogonal, unitary or
symplectic transformations. We focus on invariant Hermite-type ensembles with 
\begin{equation} \label{wmassinv}
\mathbb{P}_{N,\beta}(M)\,  dM = \frac{1}{Z_{N,\beta}}e^{- N \text{tr} (V(M))} \, dM, \quad \beta=1,2,4,
\end{equation}
where $dM$ denotes the Lebesgue measure on the considered matrices defined  as the product of Lebesgue-measures on algebraically independent entries of $M$. 
Moreover,  $Z_{N, \beta}$ denotes a normalisation constant and $V$ grows sufficiently fast such that all moments of $e^{-NV(x)}dx$ are finite. 
It is well known (see e.g.~\cite{deift2}) that with $w_\beta^{(N)}(x)=e^{- N V(x)}$ for $\beta=1,2$ and $ w_{\beta}^{(N)}(x) =e^{-2N  V(x)}$ for $\beta=4$
the joint probability density of the eigenvalues on the Weyl chamber $\mathcal{W}_N \coloneqq \{(\lambda_1,\ldots,\lambda_N) \in \mathbb R^N | \lambda_1 \leq \ldots \leq \lambda_N\}$ is given by 
\begin{equation}\label{jevinv}
d\mathbb{P}_{N,\beta}(\lambda_1, \ldots , \lambda_N)=\frac{1}{Z_{N,\beta}} \prod_{j<k}\left|\lambda_k-\lambda_j     \right|^\beta \, \, \prod_{j=1}^N w_{\beta}^{(N)}(\lambda_j) \,\, d\lambda_1 \ldots d\lambda_N,\quad \beta=1,2,4.
\end{equation}
In order to observe a limiting behaviour of the empirical spacing distribution, we need to introduce a proper rescaling,  which uses the notion of spectral density (see also \cite{Alsm_Loewe} for some further comments on the setting and the rescaling). Throughout this paper let  $\widetilde{\lambda}_1^{(N)}(H) \leq \widetilde{\lambda}_2^{(N)}(H) \leq \ldots \leq \widetilde{\lambda}_N^{(N)}(H)$ denote the ordered eigenvalues of a $N \times N$ matrix H. We may omit the superscript $N$ and the matrix $H$ in the notation $\widetilde{\lambda}_i^{(N)}(H) $ if those can be recovered from the context.
By the \textit{limiting spectral density} we denote a function $\psi$ such that for $s\geq 0$
\begin{equation}
\label{spec_density}
  \frac{1}{N}\mathbb{E}_{N,\beta}\left(\# \{j\colon \widetilde{\lambda}_j^{(N)} \leq s \} \right) \to \int_{-\infty}^s \psi(t) dt, \quad N \to \infty, 
\end{equation}
where $\mathbb{E}_{N,\beta}$ is the expectation with respect to $d\mathbb{P}_{N,\beta}$.
The  existence of the spectral density was shown for large classes of invariant ensembles and hence we also assume its existence for the ensembles considered here.
Let $a$ denote a point of continuity of $\psi$ in the interior of the support of $\psi$ with $\psi(a)>0$.  
In a small neighbourhood of $a$, where $\psi$ is close to $\psi(a)$, the expected distance of adjacent eigenvalues $\widetilde{\lambda}_i$ is to leading order given by $(N \psi(a))^{-1}$. Hence, we introduce the rescaled and centred eigenvalues
\begin{equation}\label{rescale}
		\lambda_i\coloneqq  ( \widetilde{\lambda}_i -a) N\psi(a).
\end{equation}
We restrict our attention to eigenvalues $\widetilde{\lambda}_i$ that lie in a  $N$-dependent interval $I_N$  with vanishing length
$|I_N| \to 0$ for $N \to \infty$ and centre $a$. By (\ref{spec_density}) the spacings of the  rescaled eigenvalues $\lambda_i$ are close to one on average.
We further denote the rescaled counterparts of the intervals $I_N$ by
	\begin{equation*} 
		A_N\coloneqq  N \psi(a)(I_N-a)=\{N \psi(a)(t-a) \mid t \in I_N \},
	\end{equation*}
i.e.~$\widetilde{\lambda}_i \in I_N$ if and only if $\lambda_i \in A_N$. 
Moreover, we assume   $N|I_N| \to \infty$ for $N \to \infty$, which ensures that $|A_N| \to \infty$ as $N \to \infty$ and hence a growing number of eigenvalues is taken into account as the matrix size tends to infinity. 
\begin{remark}\label{remark_setting}
In the remainder of this paper we consider $V$ in (\ref{wmassinv}), $a$ and $I_N$ as fixed and we assume that the spectral density $\psi$ exists.
Throughout the proofs $C$ denotes a generic constant, which may differ from line to
line, but does not depend on any parameters of the proof except  $V$, $a$ and
$I_N$.
Whenever we use the $\mathcal{O}$-notation, the constant implicit in this notation is also uniform in all parameters of the proof.
In some estimates we will use the notation
\begin{equation*}
\overline{\alpha} \coloneqq \max(1,\alpha), \quad \alpha>0.
\end{equation*} 
\end{remark}
With this notation we can now define the counting measure of the spacings of adjacent eigenvalues $\sigma_N$ and hence  the empirical spacing distribution  $\int_0^s d\sigma_N$. 
\begin{definition}
For a matrix H with eigenvalues $\widetilde{\lambda}_1 \leq \ldots \leq
\widetilde{\lambda}_N$  and rescaled eigenvalues  $\lambda_1 \leq \ldots \leq
\lambda_N$ (see (\ref{rescale})) we set
\begin{equation*}
\sigma_N(H) \coloneqq \frac{1}{|A_N|} \sum_{\widetilde{\lambda}_{i+1},\widetilde{\lambda}_i \in I_N} \delta_{\lambda_{i+1}-\lambda_{i}}.
\end{equation*}
\end{definition}
We observe that in the definition of $\sigma_N$  instead of normalizing by the actual number of spacings or eigenvalues in $I_N$, which depends on $H$ and is hence random, we rather normalise by the deterministic factor $|A_N|$  since we expect about $|A_N|$ eigenvalues   in $I_N$.
Our Main Theorem then states that there exist   functions  $F_{\beta}$ with $\beta \in \{1,2,4\}$
such that for large classes of potentials $V$ the expected Kolmogorov distance
of the empirical spacing distribution from $F_{\beta}$ converges
to zero as the matrix size tends to infinity, i.e. 
\begin{equation*} 
\lim_{N \to \infty }\mathbb{E}_{N,\beta} \left(\sup_{s \geq 0} \left|\int_{0}^s \, 
   d\sigma_N(H) - F_{\beta}(s)\right|\right) =0, \quad \beta=1,2,4.
\end{equation*}
Later (see Assumption \ref{main_asssumption}), the hypothesis of the theorem will not be formulated as an explicit assumption on $V$, but rather on certain  matrix kernels $K_{N,\beta}$ . These matrix kernels appear in the context of the correlation functions $  R_{N,k}^{(\beta)}$, which are an important tool to analyse several local eigenvalue statistics.
We note that the Main Theorem and all intermediate lemmas are valid for $\beta=1,2,4$, but we focus on the more complicated cases $\beta=1$ and $\beta=4$. 
%
\section{Correlation Functions and the Main Theorem}
Before we can give a precise statement of our Main Theorem, and in particular the required assumption, we start with the definition of the correlation functions  and their rescaled counterparts, which leads to the introduction of the (matrix) kernels $K_{N,\beta}$.
\begin{definition}\label{defR}
	Let  $R_{N,N}^{(\beta)}$ denote the density function on the right hand side of (\ref{jevinv}), i.e.~$d\mathbb{P}_{N,\beta}(\widetilde{\lambda}_1, \ldots , \widetilde{\lambda}_N)=R_{N,N}^{(\beta)} (\widetilde{\lambda}_1, \ldots , \widetilde{\lambda}_N) \,\, d\widetilde{\lambda}_1 \ldots d\widetilde{\lambda}_N$, and  consider $R_{N,N}^{(\beta)}$ as a function on $\mathbb R^N$.
For $k \in \mathbb{N}, k\leq N$ and $\beta\in \{1,2,4\}$ we set 
\begin{align*} 
 R_{N,k}^{(\beta)}(\widetilde{\lambda}_1, \ldots, \widetilde{\lambda}_k)&\coloneqq\frac{1}{(N-k)!}   
\int_{\mathbb{R}^{N-k}} 
R_{N,N}^{(\beta)}(\widetilde{\lambda}_1,\ldots, \widetilde{\lambda}_N) \, 
d\widetilde{\lambda}_{k+1}\ldots d\widetilde{\lambda}_N,
\\
 B_{N,k}^{(\beta)}(\lambda_1, \ldots, \lambda_k)& \coloneqq \left( N \psi(a) \right)^{-k} 
 R_{N,k}^{(\beta)} \left(\widetilde{\lambda}_1, \ldots,\widetilde{\lambda}_k \right). \nonumber
\end{align*}
\end{definition}
\begin{remark}\label{intout}
		In the later proofs we will often use the fact that     $R_{N,k}^{(\beta)}(t_1,
		\ldots,t_k)$ and  $B_{N,k}^{(\beta)}(t_1,
		\ldots,t_k)$  are  invariant under permutations of the 
 		indices $\{1, \ldots, k \}$, which can easily be seen from the definitions.	
\end{remark}
The rescaled correlation functions $B_{N,k}^{(\beta)}$ can be expressed in terms of a kernel function $\hat{K}_{N,2}: \mathbb R^2 \to \mathbb R$ for $\beta=2$ and in terms of matrix kernel functions $\hat{K}_{N,\beta}: \mathbb R^2  \to \mathbb R^{2 \times 2}$ for $\beta=1,4$ as follows (see \cite{mehta_mahoux}, \cite{TracyWidom1}): 
\begin{equation}
\label{DEt_Darst_B}
B_{N,k}^{(2)}(t_1,
		\ldots,t_k)=\det \left( \hat{K}_{N,2}(\tilde{t_i}, \tilde{t_j})\right)_{1\leq i,j\leq k},
\end{equation}
\begin{equation*}
B_{N,k}^{(\beta)}(t_1,
		\ldots,t_k)=\text{Pf}(\hat{K}_{N,\beta}(\tilde{t_i},\tilde{t_j})_{1\leq i,j\leq k}  J), \quad \beta=1,4,
\end{equation*}
\begin{equation}	
\label{Def_J}	
J\coloneqq\text{diag}(\sigma,\ldots,\sigma) \in \mathbb R^{2k \times 2k}, \quad \sigma\coloneqq\begin{pmatrix*}[r] 0 & \phantom{-}1 \\ -1 & 0\end{pmatrix*},
\end{equation}
where Pf denotes the Pfaffian of a matrix.
These kernel functions $\hat{K}_{N,\beta}$ are related to orthogonal
polynomials, permitting an asymptotic analysis as the matrix size tends to
infinity. For many invariant ensembles  we have the convergence to the sine kernel for $\beta=2$ (see e.g.~\cite{deift1}, \cite{DeiftKriecherbauer}, \cite{Lev_Lub})
\begin{equation}
\label{sine_kernel}
\lim_{N \to \infty}\hat{K}_{N,2}(\tilde{x}, \tilde{y}) = \frac{\sin(\pi (x-y))}{\pi (x-y)} \eqqcolon K_2(x,y)
\end{equation}
and for $\beta=1,4$ (see \cite{deift2}, \cite{Scherbina})
\begin{equation*}
\hat{K}_{N,\beta}(\tilde{x},\tilde{y}) \xrightarrow{N\to\infty} \begin{pmatrix} \mathsf{S}_{\beta}(x,y) & \mathsf{D}_{\beta}(x,y) \\ \mathsf{I}_{\beta}(x,y) & \mathsf{S}_{\beta}(x,y) \end{pmatrix}=: K_{\beta}(x,y)
\end{equation*} 
with 
\begin{align}
\mathsf{S}_{1}(x,y) &\coloneqq K_{2}(x,y) , \quad  \mathsf{S}_{4}(x,y) \coloneqq K_{2}(2x,2y),	 
\label{notation_S}
\\
\mathsf{D}_{1}(x,y)& \coloneqq \frac{\partial }{\partial x}K_{2}(x,y), \quad    
\mathsf{D}_{4}(x,y)\coloneqq \frac{\partial }{\partial x}K_{2}(2x,2y),
\label{notation_D}
\\
\mathsf{I}_{1}(x,y) &\coloneqq \int_0^{x-y} K_{2}(t,0)dt -\frac{1}{2}\text{sgn}(x-y),\quad   
 \mathsf{I}_{4}(x,y) \coloneqq \int_0^{x-y} K_{2}(2t,0)\, dt.
\label{notation_I}
\end{align}
Although the convergence $\hat{K}_{N,\beta} \to K_{\beta}$ for $N \to \infty$ has been established in
quite some generality, we need a slightly more refined result for the formulation of the Main Theorem. 
More precisely, we need the convergence to be uniform on the growing sets $A_N$, while in the literature it is commonly shown for compact sets.
 We formulate this  uniform
 convergence of $\hat{K}_{N,\beta}$, which is what one would
 expect from known universality results,  as an assumption.

\begin{assumption}\label{main_asssumption}
In addition to the setting described in the Introduction (see Remark
\ref{remark_setting}), we assume that $A_N$, $a$ and $V$ are given such that the following statement is true:
There exists a sequence $(\kappa_N)_{N\in \mathbb{N}}$ with $\lim_{N \to \infty} \kappa_N=0$
 such that  for $\beta=1,2,4$ and $x,y \in A_N,$ $\tilde{x}=a+\frac{x}{N\psi(a)},$ $  \tilde{y}= a+\frac{y}{N\psi(a)}$  we have 
\begin{equation}
\label{conv_kernel}
\hat{K}_{N,\beta}\left(\tilde{x},\tilde{y}\right)=K_{\beta}(x,y)+ \mathcal{O}(\kappa_N), 
\end{equation}
where the constant implicit in the error term is uniform for all $x,y \in A_N$.
\end{assumption} 
 We comment on the scope of Assumption \ref{main_asssumption} after stating the Main Theorem. 
%
\begin{theorem}[Main Theorem]
\label{MAINTHEO}
There exist probability measures $\mu_{\beta}, \beta=1,2,4$ such that under Assumption \ref{main_asssumption} we have
\begin{equation*}
\lim_{N \to \infty }\mathbb{E}_{N,\beta} \left(\sup_{s \geq 0} \left|\int_{0}^s \, 
   d\sigma_N(H) -\int_0^s d\mu_{\beta}\right|\right) =0.
\end{equation*}
\end{theorem}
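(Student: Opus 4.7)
The plan is to follow a Glivenko--Cantelli/Katz--Sarnak-type strategy: first reduce the expected Kolmogorov distance to $L^{1}$-convergence at finitely many points of $s$ by exploiting monotonicity, and then prove $L^{1}$-convergence at each fixed $s$ by a separate bias/variance decomposition. More precisely, both $s\mapsto\int_{0}^{s}d\sigma_{N}(H)$ and $s\mapsto\int_{0}^{s}d\mu_{\beta}$ are non-decreasing in $s$, so for any finite grid $0=s_{0}<s_{1}<\cdots<s_{M}$ chosen so that $\int_{s_{j}}^{s_{j+1}}d\mu_{\beta}<\varepsilon$ for all $j$, a sandwich argument yields, for every $s\in[s_{j},s_{j+1}]$, the two-sided bound
\begin{equation*}
\Bigl|\int_{0}^{s}d\sigma_{N}-\int_{0}^{s}d\mu_{\beta}\Bigr|\leq \max_{0\leq j\leq M}\Bigl|\int_{0}^{s_{j}}d\sigma_{N}-\int_{0}^{s_{j}}d\mu_{\beta}\Bigr|+\varepsilon +\Bigl|\sigma_{N}([0,\infty))-1\Bigr|,
\end{equation*}
where the last term accounts for the fact that $\sigma_{N}$ is not a priori a probability measure (its total mass being the random ratio of eigenvalue pairs in $I_{N}$ to $|A_{N}|$). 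Taking expectation, the task reduces to showing that $\mathbb{E}_{N,\beta}\bigl|\int_{0}^{s}d\sigma_{N}-\int_{0}^{s}d\mu_{\beta}\bigr|\to 0$ for each fixed $s\in[0,\infty]$, together with $\mathbb{E}_{N,\beta}|\sigma_{N}([0,\infty))-1|\to 0$; then letting $\varepsilon\to 0$ after $N\to\infty$ (or along a slowly decaying sequence $\varepsilon_{N}$) concludes.

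For the $L^{1}$-convergence at a fixed $s$, I decompose into bias and fluctuation. The bias $\mathbb{E}_{N,\beta}\int_{0}^{s}d\sigma_{N}-F_{\beta}(s)$, where $F_{\beta}(s):=\int_{0}^{s}d\mu_{\beta}$, is exactly the content of the pointwise convergence Theorem \ref{pointwise}(ii), which is obtained by relating $\sigma_{N}$ to the auxiliary counting measure $\gamma_{N}$ of Section~4, expressing $\mathbb{E}_{N,\beta}\gamma_{N}$ via the correlation functions $R_{N,k}^{(\beta)}$ and hence via the matrix kernels $\hat{K}_{N,\beta}$, and then substituting $\hat{K}_{N,\beta}=K_{\beta}+\mathcal{O}(\kappa_{N})$ from Assumption \ref{main_asssumption}. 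The fluctuation term is controlled, via Jensen's inequality, by $\operatorname{Var}_{N,\beta}(\int_{0}^{s}d\sigma_{N})^{1/2}$. Again passing to $\gamma_{N}$, this variance is expressible in terms of $R_{N,k}^{(\beta)}$ for $k\leq 4$, and Assumption \ref{main_asssumption} plus the determinantal/Pfaffian structure reduces the estimate to an integral of kernel entries over $A_{N}\times A_{N}$ divided by $|A_{N}|^{2}$. For $\beta=2$ the rapid decay of the sine kernel makes this routine, and for $\beta=1$ the additional entries $\mathsf{D}_{1}$, $\mathsf{I}_{1}$ are still manageable.

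The main obstacle, as flagged in the introduction and in \cite{soshnikov}, is the variance estimate in the symplectic case $\beta=4$. The entry $\mathsf{I}_{4}(x,y)=\int_{0}^{x-y}K_{2}(2t,0)\,dt$ of $K_{4}$ does not decay in $|x-y|$ and in fact behaves like a non-integrable constant minus oscillations, so the naive bound on the variance integral diverges with $|A_{N}|$. To overcome this, I would exploit the skew-symmetric Pfaffian structure: the $\mathsf{I}_{\beta}$ entries enter variance expressions always paired with $\mathsf{D}_{\beta}$ entries through the matrix $J$ in (\ref{Def_J}), so integration by parts moves a derivative from $\mathsf{D}_{4}$ onto $\mathsf{I}_{4}$ (producing $K_{2}(2t,0)$, which \emph{is} integrable) and onto boundary terms on $\partial A_{N}$. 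The localisation $|I_{N}|\to 0$, combined with the effective vanishing of these boundary contributions at rescaled distance $|A_{N}|$, is then what turns the offending $\mathsf{I}_{4}$-contributions into an $o(1)$ bound on $\operatorname{Var}_{N,\beta}(\int_{0}^{s}d\sigma_{N})$. This is the technical heart of Section~5.

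Combining the three steps, the monotonicity reduction of Step~1 and the pointwise $L^{1}$ control of Steps~2--3 at finitely many grid points (plus the tail estimate on the total mass) give the desired $\mathbb{E}_{N,\beta}\sup_{s\geq 0}|\int_{0}^{s}d\sigma_{N}-F_{\beta}(s)|\to 0$, completing the proof.
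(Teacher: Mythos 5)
Your overall skeleton---a Glivenko--Cantelli-type grid reduction using monotonicity of both distribution functions, followed by pointwise $L^{1}$-control via a bias/fluctuation split, with a correction for $\sigma_N$ not being a probability measure---does mirror the paper's strategy (nodes $s_i$ with $\int_0^{s_i}d\mu_\beta=i/M(N)$, Lemma \ref{est_delta_s_H}, Lemma \ref{est_sup}, and Corollary \ref{koro_ew}). However, there are two substantive gaps in the technical core.

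First, the claim that the variance of $\int_0^s d\sigma_N$ is ``expressible in terms of $R_{N,k}^{(\beta)}$ for $k\leq 4$'' is not correct. The event that $\lambda_i$ and $\lambda_{i+1}$ are \emph{adjacent} eigenvalues with spacing at most $s$ depends on the whole configuration, not just on a pair of points; this is precisely why already the first moment of $\int_0^s d\sigma_N$ requires correlation functions of every order and why the inclusion--exclusion identity $\int_0^s d\sigma_N=\sum_{k\geq 2}(-1)^k\int_0^s d\gamma_N(k,H)$ (Lemma \ref{lemma_combinatoric}) is introduced. The paper never bounds $\mathbb{V}_{N,\beta}(\int_0^s d\sigma_N)$ directly: it bounds $\mathbb{V}_{N,\beta}(\int_0^\alpha d\gamma_N(k,H))$ separately for each $k$ (Theorem \ref{lemmavar.1}, which already involves $R_{N,2k}^{(\beta)}$), then truncates the alternating series at a slowly growing level $L(N)$ and balances $L(N)$, $M(N)$, $\kappa_N$ and $|A_N|$ against each other in Section \ref{sec:completing}. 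Your proposal omits this truncation machinery entirely, and without it the reduction to finitely many correlation functions has no basis.

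Second, the $\beta=4$ mechanism you sketch would not go through. In the Pfaffian expansion (\ref{Def_F_S})--(\ref{expansion_corr_limit}) the $(2,1)$-entry $\mathsf{I}_4$ is \emph{not} always adjacent to a $\mathsf{D}_4$-entry sharing an integration variable: between two $\mathsf{I}_4$-factors in a cyclic product there is in general a chain of $\mathsf{S}_4$-factors, so there is no common variable on which to integrate by parts against a $\mathsf{D}_4$. Even when they are adjacent, integration by parts transfers the derivative to $\mathsf{I}_4$, producing $\mathsf{S}_4$, which is bounded and square-integrable but still not integrable over $A_N$, so the divergence in $|A_N|$ is not removed. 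The paper's resolution is of a different nature: decompose $\mathsf{I}_4=\mathcal{P}^{(\text{int})}+\mathcal{P}^{(\text{sing})}$ with $\mathcal{P}^{(\text{sing})}\in\{0,\pm\tfrac14\}$ a sign-step (see (\ref{Def_P_int}) and (\ref{Def_R})), control the integrable parts via (\ref{int_R}), and then show that the term in which \emph{all} type-(ii) factors are singular vanishes exactly after summing over the index vectors $(d_1,\dots,d_{2k})$, by the reflection/parity symmetry of $K_4$ established in Lemma \ref{lemma_G}. It is this combinatorial cancellation, not integration by parts, that kills the otherwise divergent contribution of $\mathsf{I}_4$, and your argument would need to be replaced by something of this kind.
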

In the following remark we argue that the assumption of the Main Theorem is actually satisfied by a certain class of potentials $V$ for Hermite-type ensembles and discuss its validity beyond these ensembles.
\begin{remark}
\label{Remark_Assumption}
A general  proof of Assumption \ref{main_asssumption} is not readily available in the literature, especially not for $\beta=1$ and $\beta=4$.  For Hermite-type ensembles with varying weights it can in fact easily, though technically,  be deduced from known results for a certain class of potentials $V$. The assumption can further be expected to hold for a number of invariant random matrix ensembles.
\begin{enumerate}
\item[(a)]\textit{Hermite-type ensembles with varying weights:} Assumption \ref{main_asssumption} can be verified for the ensembles given in  (\ref{wmassinv}) with  $|A_N|/\sqrt{N} \to 0$ as $N \to \infty$ and  $\kappa_N=\mathcal{O}(|A_N|/\sqrt{N})$ if the potential $V$ satisfies: 
\begin{itemize}
\item $V$ is a polynomial of even degree with positive leading coefficient and
\item $V$ is regular in the sense of \cite[(1.12), (1.13)]{DeiftKriecherbauer}. In particular, each convex potential $V$ satisfies this regularity condition.
\end{itemize}

The first step in the proof is to show the following result for the $\beta=2$ kernel $\hat{K}_{N,2}$: 
For $V$   real analytic, regular in the sense of \cite[(1.12), (1.13)]{DeiftKriecherbauer}, $\lim_{|x| \to \infty} \frac{V(x)}{\ln (x^2+1)}=\infty$
and 
$d_N(x,y)\coloneqq \hat{K}_{N,2}\left( \tilde{x},\tilde{y}\right)  - K_2(x,y)$
we have
$$\sup_{x,y \in A_N} |d_N(x,y)|=\mathcal{O}\left(|A_N|N^{-1}\right), \quad \sup_{x,y \in A_N} \left|\frac{\partial}{ \partial y}d_N(x,y)\right|=\mathcal{O}\left(|A_N|N^{-1}\right) $$
and 
$$\sup_{x,y \in A_N} \int_x^y |d_N(s,y)| ds=\mathcal{O}\left(|A_N|^2N^{-1}\right). $$
This result can basically be obtained by  adjusting the Riemann-Hilbert analysis presented in \cite{DeiftKriecherbauer} together with some ideas of \cite{Vanlessen} (c.f.~\cite[Theorem 3.4]{diss} for a sketch of the proof).

The main ingredient for the next step of the proof is the well-known fact  that the entries of the matrix $\hat{K}_{N,\beta}\left(\tilde{x},\tilde{y}\right)$ can all be expressed in terms of its $(1,1)$-entry as follows: 
For $\beta=1$ and $\beta=4$ we have 
\begin{align*}
(\hat{K}_{N,\beta}\left(\tilde{x},\tilde{y}\right))_{2,2} 
&= 
(\hat{K}_{N,\beta}\left(\tilde{y},\tilde{x}\right))_{1,1}, \quad 
(\hat{K}_{N,\beta}\left(\tilde{x},\tilde{y}\right))_{1,2}
=
\frac{\partial}{\partial y}(\hat{K}_{N,\beta}\left(\tilde{x},\tilde{y}\right))_{1,1}
\\
(\hat{K}_{N,4}\left(\tilde{x},\tilde{y}\right))_{2,1} 
&=
 - \int_{\tilde{x}}^{\tilde{y}} (\hat{K}_{N,4}\left(t,\tilde{y}\right))_{1,1} \, dt,
 \\ 
 (\hat{K}_{N,1}\left(\tilde{x},\tilde{y}\right))_{2,1} &= - \int_{\tilde{x}}^{\tilde{y}} (\hat{K}_{N,1}\left(t,\tilde{y}\right))_{1,1} \, dt - \frac{1}{2} \text{sgn}(\tilde{x} - \tilde{y}).
 \end{align*} 
One can then show that for  monomials $V(x)=x^{2m}$ that are regular in the sense of \cite[(1.12), (1.13)]{DeiftKriecherbauer} and $c_N (x,y)\coloneqq(\hat{K}_{N,\beta}\left(\tilde{x},\tilde{y}\right))_{1,1} -\hat{K}_{N,2}\left( \tilde{x},\tilde{y}\right)$ we have
$$\sup_{x,y \in A_N} |c_N(x,y)|=\mathcal{O}\left(N^{-\frac{1}{2}} \right), \quad \sup_{x,y \in A_N} \left|\frac{\partial}{ \partial y}c_N(x,y)\right|=\mathcal{O}\left(N^{-\frac{1}{2}} \right)  
$$
and 
$$
\sup_{x,y \in A_N} \int_x^y |c_N(s,y)| ds=\mathcal{O}\left(|A_N|N^{-\frac{1}{2}} \right). $$
We can derive these estimates by extending those obtained from the Riemann-Hilbert analysis in \cite{deift2} (relying on the methods introduced by Widom \cite{Widom}) to uniform estimates in the growing sets $A_N$. A generalization from monomials $V$ to polynomials of even degree can then be obtained from \cite{Scherbina}.

In the special case of $\beta=2$ Assumption \ref{main_asssumption} is verified for real analytic $V$ with a strictly increasing derivative $V'$ and $\lim_{|x| \to \infty} V(x)=\infty$ in  \cite{KSSV}.
The work of McLaughlin and Miller \cite{McL_M} suggests that for $\beta=2$ it is even sufficient that $V$
possesses two Lipschitz continuous derivatives (together with some assumptions on the equilibrium measure) instead of requiring $V$ to be analytic.
\item[(b)]\textit{Other ensembles:}
Besides Hermite-type invariant ensembles, a number of other invariant ensembles exists, for which convergence of the rescaled kernel $\hat{K}_{N,\beta}$ to $K_{\beta}$  was obtained by Riemann-Hilbert analysis. Due to the 	resemblance of the Riemann-Hilbert analysis to the case of varying Hermite weights, we can expect Assumption \ref{main_asssumption} to hold for such ensembles. 
Among those ensembles are e.g.~
\begin{itemize}
\item Laguerre-type ensembles  \cite{DGKV}, where $w_{\beta}^{(N)}$ in (\ref{jevinv}) is given by 
$$w_{\beta}(x)= w_{\beta}^{(N)}(x)= \begin{cases} x^\gamma e^{-Q(x)}, & \beta=1,2 \\  x^{2\gamma} e^{-2Q(x)}, & \beta=4  \end{cases}$$
with $\gamma >0$ and $Q$
denotes a polynomial of positive degree and with positive leading coefficient. 
\item modified Jacobi unitary ensembles ($\beta=2$) \cite{KuV2}, where
$$w_2(x)=(1-x)^{\alpha_1} (1+x)^{\alpha_2} h(x), \quad x \in (-1,1)$$
with $\alpha_1,\alpha_2 > -1$ and $h$   real valued, analytic and taking only positive values on $[-1,1]$. 
\item  non-varying Hermite-type ensembles \cite{DKMVZ1,deift2}, where 
$$w_{\beta}(x)=e^{-Q(x)},\quad \beta=1,2,4$$
with a polynomial $Q$ of even degree with positive leading coefficient.
\end{itemize}
\end{enumerate}
\end{remark}
Before  we start to prove the Main Theorem, we further consider the (rescaled) correlation functions and their limits. 
In order to state some results about $B_{N,k}^{(\beta)}$, we denote their large $N$ limits, which exist under Assumption \ref{main_asssumption}, by
\begin{equation*}
W_k^{(\beta)} (t_1,\ldots, t_k) \coloneqq \lim_{N \to \infty }B_{N,k}^{(\beta)}(t_1,
		\ldots,t_k), \quad \beta=1,2,4.
\end{equation*}
In order to expand the correlation functions and their limits in terms of the kernel functions $\hat{K}_{N,\beta}$ and $K_{\beta}$ for $\beta=1$ and $\beta=4$,
we introduce some more notation. 

Let  $S=\{i_1,\ldots,i_k\} \subset \{1,\ldots, n\}, i_1 < \ldots < i_k$ denote a set of cardinality $k$. For a vector $x=(x_1,\ldots,x_n) \in\mathbb R^n$  we set 
\begin{equation}
\label{notation_vector}
x_S\coloneqq (x_{i_1}, \ldots, x_{i_k})
\end{equation}
and we denote the set of bijections on $S$ by
\begin{equation*}
\mathfrak{S}(S) \coloneqq \{  \sigma \colon S  \to S \mid  \sigma \text{ is a bijection}\}.
\end{equation*}
For a matrix kernel $K: \mathbb R^2 \to \mathbb R^{2 \times 2}$, $\sigma \in \mathfrak{S}(S)$, $d=(d_1,\ldots,d_k)\in \mathbb R^k, t=(t_1,\ldots,t_N) \in \mathbb R^N$ we set 
\begin{equation}
\label{Def_F_S}
F_{S,\sigma}^K (d, t_S) 
\coloneqq \frac{1}{2k} (K(t_{\sigma(i_1)},t_{\sigma(i_2)}))_{d_1 d_2} (K(t_{\sigma(i_2)},t_{\sigma(i_3)}))_{d_2 d_3} \ldots (K(t_{\sigma(i_k)},t_{\sigma(i_1)}))_{d_kd_1}.
\end{equation}
With this notation the correlation functions can be expressed by 
(see \cite{TracyWidom1})
\begin{equation}
\label{expansion_corr}
B_{N,k}^{(\beta)} (t_1,\ldots,t_k) = \sum_{m=1}^k \sum_{S_1, \ldots, S_m}
(-1)^{k-m}  \sum_{\substack{\sigma_1 \in \mathfrak S(S_1),\\\ldots,\\\sigma_m \in \mathfrak S(S_m)}}\ \sum_{d_1,\ldots,d_k=1}^2 \prod_{i=1}^m  F_{S_i,\sigma_i}^{\hat{K}_{N,\beta}}(d_{S_i},\tilde{t}_{S_i}), \quad \beta=1,4,
\end{equation}
and hence
\begin{equation}
\label{expansion_corr_limit}
W_k^{(\beta)} (t_1,\ldots,t_k) = \sum_{m=1}^k \sum_{S_1, \ldots, S_m}
(-1)^{k-m}  \sum_{\substack{\sigma_1 \in \mathfrak S(S_1),\\\ldots,\\\sigma_m \in \mathfrak S(S_m)}}\ \sum_{d_1,\ldots,d_k=1}^2 \prod_{i=1}^m  F_{S_i,\sigma_i}^{K_{\beta}}(d_{S_i},t_{S_i}), \quad \beta=1,4,
\end{equation}
where for each $m$ we sum over all partitions of $\{1,\ldots, k\}$ into
non-empty subsets $S_1,\ldots, S_m$. Recall that  $S_1,\ldots,S_m$ and $S_{\pi(1)},\ldots,S_{\pi(m)}$ denote the same partition for any permutation $\pi$.

 We observe further that each entry of the limiting kernels $K_{\beta}$ in (\ref{notation_S}) to (\ref{notation_I}) is a bounded function and by the uniform convergence required by Assumption \ref{main_asssumption} we have for some $C>0$
 \begin{equation*}
 \sup_{N \in \mathbb N} \sup_{x,y \in A_N} \left| (\hat{K}_{N,\beta}(\widetilde{x},\widetilde{y}))_{ij} \right| \leq C, \quad i,j=1,2,\quad \beta=1,4.
 \end{equation*}
For later reference, we note that  the general identity 
\begin{equation*}
\prod_{i=1}^k a_i - \prod_{i=1}^k b_i=  \sum_{j=1}^k \left(\prod_{i=1}^{j-1} a_i \right)(a_j-b_j) \left(\prod_{i=j+1}^k b_i\right)
\end{equation*} 
implies (under Assumption \ref{main_asssumption}) 
\begin{equation}
\label{est_F.1}
\left|\prod_{i=1}^m  F_{S_i,\sigma_i}^{\hat{K}_{N,\beta}}(d_{S_i},\tilde{t}_{S_i})-\prod_{i=1}^m  F_{S_i,\sigma_i}^{K_{\beta}}(d_{S_i},t_{S_i})\right| \leq C^m \mathcal{O}(\kappa_N).
\end{equation}
With these estimates and notation we can prove the following results about the correlation functions and their convergence under Assumption \ref{main_asssumption}. 
\begin{lemma}
\label{result_corr_func}
 For $k,N
\in\mathbb N$, $k\leq N$ and $\beta=1,2,4$ we have under  Assumption \ref{main_asssumption}:
\begin{enumerate}
\item[(i)]
$ \displaystyle B_{N,k}^{(\beta)}(t_1,\ldots,t_k)=W_k^{(\beta)}(t_1,\ldots,t_k)
+ k!  C^k \mathcal{O}(\kappa_N), \quad t_1,\ldots, t_k \in A_N. $
\\
\item[(ii)]
$W_k^{(\beta)}$ is   symmetric and additive translational invariant   on $\mathbb R^k$.
\\
\item[(iii)]
 $ \displaystyle |B_{N,k}^{(\beta)}(t_1,\ldots,t_k)| \leq C^k k^{k/2}, \quad |W_k^{(\beta)}(t_1,\ldots,t_k)| \leq C^k k^{k/2}, \quad t_1,\ldots,t_k \in A_N. 
 $
\end{enumerate}
\end{lemma}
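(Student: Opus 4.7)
My plan is to prove the three statements in the order (ii), (iii), (i), since (i) is the most delicate and (iii) provides uniform bounds that simplify the manipulations throughout.

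For (ii), I would inherit the symmetry of $W_k^{(\beta)}$ from that of $B_{N,k}^{(\beta)}$ stated in Remark~\ref{intout}, by passing to the limit $N\to\infty$ that exists under Assumption~\ref{main_asssumption}. Translation invariance is then read off from inspection of (\ref{sine_kernel}) and (\ref{notation_S})--(\ref{notation_I}): each entry of $K_\beta(x,y)$ is a function of the difference $x-y$ alone, so every factor in the product defining $F_{S,\sigma}^{K_\beta}(d,t_S)$ in (\ref{Def_F_S}) depends only on differences $t_{\sigma(i_j)} - t_{\sigma(i_{j+1})}$. Hence so does every term in the expansion (\ref{expansion_corr_limit}).

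For (iii), I would use the determinantal and Pfaffian representations. From the explicit form of $K_\beta$, all its entries are bounded on $\mathbb{R}^2$, and Assumption~\ref{main_asssumption} promotes this to a uniform bound on the entries of $\hat K_{N,\beta}(\tilde x, \tilde y)$ for $x,y \in A_N$. For $\beta=2$, Hadamard's inequality applied to (\ref{DEt_Darst_B}) gives at once $|B_{N,k}^{(2)}| \leq C^k k^{k/2}$. For $\beta \in \{1,4\}$, I would combine the identity $|\text{Pf}(M)|^2 = |\det(M)|$ with Hadamard's inequality on the relevant $2k \times 2k$ antisymmetric matrix: this yields $|B_{N,k}^{(\beta)}| \leq ((2k)^{k} C^{2k})^{1/2}$, and absorbing $2^{k/2}$ into $C$ produces the bound of the claimed form. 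The same estimate for $W_k^{(\beta)}$ is obtained either by passing to the limit or by repeating the argument directly for $K_\beta$.

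For (i), I would subtract (\ref{expansion_corr_limit}) from (\ref{expansion_corr}) and bound each pair of corresponding summands via (\ref{est_F.1}), obtaining a per-term error of $C^m \mathcal O(\kappa_N) \leq \overline{C}^{k}\, \mathcal O(\kappa_N)$. The remaining task is purely combinatorial: counting the summands weighted by the cyclic normalization $\frac{1}{2|S_i|}$ built into (\ref{Def_F_S}), together with the $2^k$ choices of $d_1, \ldots, d_k$ and the bijections $\sigma_i \in \mathfrak{S}(S_i)$. The crucial collapse is via the classical identities $\sum_{|P|=m}\prod_i(|S_i|-1)! = c(k,m)$ (unsigned Stirling numbers of the first kind) and $\sum_m c(k,m)=k!$, which together yield the required total bound $k!\,C^k\,\mathcal O(\kappa_N)$. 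This bookkeeping is the principal obstacle: a naive count based on ``number of partitions times $\prod_i |S_i|!$'' overshoots by a factor of $k!$, and one must exploit the $\frac{1}{2|S_i|}$ factors in (\ref{Def_F_S}) to recover the correct dependence on $k$.
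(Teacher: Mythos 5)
Your proofs of (ii) and (iii) are essentially the same as the paper's. For (ii) the paper reads off both symmetry and translation invariance directly from the explicit form of $K_\beta$ in (\ref{sine_kernel}) and (\ref{notation_S})--(\ref{notation_I}); inheriting symmetry from $B_{N,k}^{(\beta)}$ via the limit is a harmless alternative. For (iii) the paper uses an AM--GM argument applied to $\det(KK^\mathrm{T})$ rather than Hadamard's inequality, but both give the same bound $n^{n/2}C^n$ on the determinant of an $n\times n$ matrix with entries bounded by $C$, so your route is fine; you correctly handle the Pfaffian via $|\text{Pf}(M)|^2 = |\det(M)|$ and absorb the spare factor $(2k)^{k/2}/k^{k/2}=2^{k/2}$ into $C^k$.

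For (i), however, your diagnosis of the combinatorial obstacle is mistaken, and the machinery you invoke is not needed. You claim that a count of the form ``number of partitions times $\prod_i |S_i|!$ overshoots by a factor of $k!$.'' It does not. The relevant quantity is
\[
\sum_{m=1}^{k}\ \sum_{S_1,\ldots, S_m} \prod_{i=1}^m |S_i|!,
\]
where the inner sum ranges over unordered partitions of $\{1,\ldots,k\}$ into $m$ nonempty blocks. This is exactly what the paper bounds in (\ref{absch_kombi_1}), and it is of order $C^k k!$ on the nose (indeed one can compute it to be $\sum_m \binom{k-1}{m-1} k!/m! \leq 2^k k!$). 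Each summand in (\ref{expansion_corr}) minus the corresponding summand in (\ref{expansion_corr_limit}) is $\mathcal O(\kappa_N) C^m \leq \mathcal O(\kappa_N)\overline{C}^k$ by (\ref{est_F.1}), the $\sigma_i$-sum contributes $\prod_i |S_i|!$, and the $d$-sum contributes $2^k$; multiplying these together and applying (\ref{absch_kombi_1}) gives the stated bound $k!\,C^k\,\mathcal O(\kappa_N)$ with no appeal to the $\frac{1}{2|S_i|}$ normalisations or to Stirling numbers of the first kind. Your Stirling-number route is correct as a computation---it simply keeps the harmless factors $\prod_i \frac{1}{2|S_i|}\leq 1$ and yields the slightly sharper $\sum_m c(k,m)/2^m \leq k!$---but presenting it as necessary to ``recover the correct dependence on $k$'' misrepresents the difficulty: the $k!$ is allowed in the target bound and the straightforward count already lands there. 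You also omit the $\beta=2$ case of (i), which the paper dispatches separately via the Leibniz expansion of the determinant.
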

\begin{proof}
We use the basic estimate 
\begin{equation} \label{absch_kombi_1}
\sum_{m=1}^{k}
		\sum_{S_1,\ldots, S_m} \left( \prod_{i=1}^m |S_i|! \right) \leq 4^{k} k!,
\end{equation}
where $S_1,\ldots, S_m$ denotes a partition of $\{1,\ldots,k\}$.
The proof of (i) for $\beta=1$ and $\beta=4$ is then obvious from (\ref{est_F.1}). For $\beta=2$ statement (i) follows from the  Leibniz representation of $B_{N,k}^{(\beta)}$ and
$W_k^{(\beta)}$. 
 Statement (ii) is obvious from  (\ref{sine_kernel}) and (\ref{notation_S})  to (\ref{notation_I}). It remains to show the upper bound in (iii) for $W_k^{(\beta)}$ as the respective bound for $B_{N,k}^{(\beta)}$ follows from the uniform convergence.
We observe that  for an arbitrary positive-semidefinite matrix $K \in \mathbb R^{n \times n}$ the inequality of the geometric and arithmetic mean, applied to the eigenvalues, gives  
\begin{equation}
\det(K) \leq   \frac{1}{n^n} \left( \text{tr}(K) \right)^n \leq \frac{1}{n^n} (Cn)^n=C^n
\end{equation}
if all entries of $K$ are bounded by $C$, i.e.~$|K_{ij}|\leq C$ for $i,j=1,\ldots,n$.
Hence, we have for an arbitrary $n\times n$ matrix $K$ with entries bounded by $C$
\begin{equation}
\label{bound_K}
\det(KK^\text{T}) \leq  n^n C^{2n}.
\end{equation}
To prove the bound on $W_k^{(\beta)}$ we set for $t=(t_1,\ldots,t_k)$
$$ \mathcal{K}_{\beta}(t):= \left( K_{\beta}(t_i, t_j)\right)_{1\leq i,j\leq k},$$
where $ \mathcal{K}_{\beta}(t) \in\mathbb R^{k \times k}$ for $\beta=2$ and $ \mathcal{K}_{\beta}(t) \in\mathbb R^{2k \times 2k}$ for $\beta=1,4$.
The boundedness of $K_{\beta}(x,y)$ and (\ref{bound_K}) then imply for all $t_1,\ldots,t_k \in \mathbb R$
\begin{equation*}
\left|W_{k}^{(2)}(t_1,
		\ldots,t_k)\right|=\left|\det \left(  \mathcal{K}_{2}(t)\right)\right|= \sqrt{\det(\mathcal{K}_{2}(t)\mathcal{K}_{2}(t)^\text{T})} \leq k^{k/2} C^k 
\end{equation*}
and for $\beta=1,4$ (recall $n=2k$ in (\ref{bound_K}))
\begin{equation*}
 \left|W_{k}^{(\beta)}(t_1,
		\ldots,t_k) \right|= \text{Pf}(\mathcal{K}_{\beta}(t)  J)=\sqrt{\det \left(\mathcal{K}_{\beta}(t)  J \right)}= \det \left(\mathcal{K}_{\beta}(t)  \mathcal{K}_{\beta}(t)^\text{T}   \right)^{\frac{1}{4}}\leq k^{k/2} C^k.
\end{equation*}
\end{proof}
Another important feature of the entries of the limiting kernels $K_{\beta}$, which we need to consider, is their  integrability. 
Since, by some easy calculation, we have $\mathsf{I}_1(x,y)= \mathcal{O}(1/|x-y|)$, 
the square integrability 
\begin{equation}
\label{eq_square_int.1}
\int_{\mathbb R} (f(x,y))^2 \, dx \leq C \quad \forall y\in \mathbb R
\end{equation}
is satisfied for $f\in\{\mathsf{S}_1,\mathsf{D}_1,\mathsf{I}_1,\mathsf{S}_4,\mathsf{D}_4\}$. It is the lack of square integrability for $\mathsf{I}_4$ that complicates the later proof in the case $\beta=4$. 
This will be compensated by the following two observations:
\begin{enumerate}
\item[(i)] There exists some $C>0$ such that  for any interval $\mathcal{A} \subset \mathbb R$ we have:  
\begin{equation}
\label{eq_int.1}
\left|  \int_{\mathcal{A}} f(x,y) dx\right| \leq C \quad \forall y \in \mathbb R, \quad f \in \{ \mathsf{S}_4,\mathsf{D}_4\}. 
\end{equation} 
This can be shown by direct integration and the boundedness of $\mathsf{S}_4, \mathsf{I}_4$.
\item[(ii)] There exists some $C>0$ such that  for any $\alpha >0$, $u,v \in [-\alpha,\alpha]$, $y \in \mathbb R$ and 
 \begin{equation}
\label{Def_R}	       
\mathcal{R}_{u,v}(x,y)\coloneqq     
	       \begin{cases}  \mathsf{I}_4(x+u,y+v)- \frac{1}{4}, &\text{if }  y<x\\\mathsf{I}_4(x+u,y+v)+ \frac{1}{4}, & \text{if }y 
	       \geq x
	       \end{cases}
\end{equation}
we have
\begin{equation}
\label{int_R}
\int_{\mathbb R} \left(\mathcal{R}_{u,v} (x,y)\right)^2\, dx \leq  C  \overline{\alpha}, \quad \quad  \left| \int_\mathcal{A}   \mathcal{R}_{u,v} (x,y)\, dx \right| \leq C \overline{\alpha}.
\end{equation}
These estimates can be verified for $u=v=0$  by some easy calculation, where  for the second inequality one can use integration by parts. The
auxiliary dependence on $\alpha$ is caused by the fact that the $x$- and $y$-arguments of $\mathsf{I}_4$ in (\ref{Def_R}) are shifted by at most $\pm\alpha$.
\end{enumerate}
%
%
\section{The Expected Empirical Spacing Distribution and its Limit}\label{Sec:pointwise}
In this section we consider the expected empirical spacing distribution, denoted by $ \mathbb E_{N,\beta} \left(  \int_0^s d\sigma_N(H)\right)$, and show the  existence of its large $N$ limit. 
Moreover, we derive a representation of this limit. For the convenience of the reader, we present some of the calculations in \cite{Alsm_Loewe}, adding more refined error estimates. Moreover, we comment on the connection between spacings and gap probabilities and present a further estimate for the proof of the Main Theorem in Corollary \ref{koro_ew}.

We observe that with  
$$ f(\lambda_1^{(N)}(H), \ldots, \lambda_N^{(N)}(H)) \coloneqq \int_0^s d\sigma_N(H), \quad \quad (\lambda_1^{(N)}(H), \ldots, \lambda_N^{(N)}(H)) \in \mathcal{W}_N$$
we can calculate the expected empirical spacing distribution  by 
$$ \mathbb E_{N,\beta} \left(  \int_0^s d\sigma_N(H)\right) = \int_{\mathcal{W}_N} f(t_1,\ldots,t_N) B_{N,N}^{(\beta)}(t_1,\ldots,t_N)dt.$$
Unfortunately, in this representation we cannot use the invariance property of the correlation functions mentioned in Remark \ref{intout}, as $f$ can only be defined for ordered $N$-tuples. 
Therefore, we 
follow the ideas of \cite{katzsarnak} and introduce  counting measures $\gamma_N(k,H)$ for $k \geq 2$ that are
closely related to $\sigma_N$.
For a random matrix $H$ with eigenvalues $\widetilde{\lambda}_1 \leq  \ldots
\leq \widetilde{\lambda}_N$ and rescaled eigenvalues $\lambda_1 \leq  \ldots
\leq \lambda_N$ we set for $k\geq 2$
\begin{equation*}
	\gamma_N(k,H)\coloneqq  \frac{1}{|A_N|}
	\sum_{\substack{i_1 < \ldots < i_k,\\ \lambda_{i_1},\lambda_{i_k}\in 
	A_N}}\delta_{(\lambda_{i_{k}}-
	\lambda_{i_1})}
	=  \frac{1}{|A_N|}
	\sum_{\substack{i_1 < \ldots < i_k,\\ \lambda_{i_1},\lambda_{i_k}\in 
	A_N}}\delta_{(\max_{1 \leq j \leq k} \lambda_{i_j} - \min_{1 \leq j \leq k} \lambda_{i_j})}.
\end{equation*}
The measures $\gamma_N(k,H)$ are related to $\sigma_N(H)$ as presented in Lemma
\ref{lemma_combinatoric} below. We will use that 
$\int_0^s
d\gamma_N(k,H)$  is a symmetric function of the eigenvalues of $H$, i.e.~with 
$$ g(\lambda_1^{(N)}(H), \ldots, \lambda_N^{(N)}(H)) \coloneqq \int_0^s d\gamma_N(k,H)$$
we have (see Remark \ref{intout})
\begin{align*}
\mathbb E_{N,\beta} \left(  \int_0^s d\gamma_N(k,H)\right) &= \int_{\mathcal{W}_N} g(t_1,\ldots,t_N) B_{N,N}^{(\beta)}(t_1,\ldots,t_N)dt
\\
&= \frac{1}{N!}\int_{\mathbb{R}^N} g(t_1,\ldots,t_N) B_{N,N}^{(\beta)}(t_1,\ldots,t_N)dt.
\end{align*}
With the notation of (\ref{notation_vector}) and 
\begin{equation*}
	\chi_s(t_t,\ldots,t_k)\coloneqq \chi_{(0,s)} \left(\max_{i=1,\ldots,k} t_i 	-\min_{i=1,\ldots,k} t_i \right) \prod_{i=1}^k \chi_{A_N} (t_i),
	\end{equation*}
where $\chi_{(0,s)}$ and $\chi_{A_N}$ denote the characteristic functions on $(0,s)$ and on $A_N$ respectively
we can further calculate
\begin{align}
		\mathbb E_{N,\beta} \left(\int_0^s d\gamma_N(k,H) \right)
		=& \frac{1}{N!} \int_{\mathbb R^N} \left( \frac{1}{|A_N|} \sum_{T 
		\subset 
		\{1,\ldots, N\}, |T|=k} \chi_s (t_T) \right)
		B_{N,N}^{(\beta)}(t) dt 
		\nonumber
		\\
		=& \frac{1}{|A_N|} \frac{1}{N!} \binom{N}{k} \int_{\mathbb R^N}    
		\chi_s (t_1,\ldots,t_k) 			 
		B_{N,N}^{(\beta)}(t) dt
		\nonumber
		\\
		=&  \frac{1}{|A_N|}\int_{\mathcal{W}_k \cap A_N^k}  \chi_s (t_1,
		\ldots,t_k)B_{N,k}^{(\beta)}(t_1,\ldots,t_k) dt_1 \ldots dt_k.
		\label{Darst_E_gamma}
\end{align} 
In order to exploit (\ref{Darst_E_gamma}) we need the following connection 
 between $\sigma_N(H)$ and $\gamma_N(k,H)$, that can be established by combinatorial arguments (see   Corollary 2.4.11, Lemma 2.4.9 and Lemma 2.4.12 in \cite{katzsarnak}). 
\begin{lemma}[chap. 2 in \cite{katzsarnak}]\label{lemma_combinatoric}
For $N \in \mathbb N$, $m \leq N$ and $s \geq 0$ we have
\begin{enumerate} 
\item[(i)] 
\begin{equation}
\label{combinatorics}
 \displaystyle
\int_{0}^s  \, d\sigma_N(H)= \sum_{k=2}^N (-1)^k \int_0^s
\, d \gamma_N(k,H),
\end{equation}
\item[(ii)]  
\begin{equation*}
(-1)^m\int_0^s d\sigma_N(H) \leq (-1)^m\sum_{k=2}^m (-1)^k \int_0^s d\gamma_N(k,H).
\end{equation*}
\end{enumerate}
\end{lemma}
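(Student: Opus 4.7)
The plan is to reduce both statements to elementary binomial identities, applied separately at each eigenvalue. Fix $H$ and $s\ge 0$, and pass to the sorted sublist $\mu_1\le\cdots\le\mu_M$ of the rescaled eigenvalues of $H$ that lie in $A_N$. Because $A_N$ is an interval and the eigenvalues are monotone in their index, any ascending tuple whose smallest and largest entries lie in $A_N$ is contained entirely in $A_N$, so the definition of $\gamma_N(k,H)$ simplifies to a sum over $k$-subsets of $\{\mu_1,\dots,\mu_M\}$ of range at most $s$. Introducing the per-point count
\begin{equation*}
r_i := \#\{\,j : i<j\le M,\ \mu_j-\mu_i\le s\,\},\qquad i=1,\dots,M,
\end{equation*}
one sees that $r_i\ge 1$ is equivalent to $\mu_{i+1}\in A_N$ together with $\mu_{i+1}-\mu_i\le s$, while choosing a $k$-subset with minimum $\mu_i$ and range $\le s$ amounts to choosing $k-1$ of the $r_i$ right-neighbours of $\mu_i$. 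Hence
\begin{equation*}
|A_N|\int_0^s d\sigma_N(H) \;=\; \sum_{i=1}^M\mathbf 1[r_i\ge 1],
\qquad
|A_N|\int_0^s d\gamma_N(k,H) \;=\; \sum_{i=1}^M\binom{r_i}{k-1}.
\end{equation*}

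Both parts of the lemma then reduce to pointwise identities in a single integer $r=r_i\ge 0$. For $(i)$, the binomial theorem gives, using $\binom{r}{k-1}=0$ for $k-1>r$,
\begin{equation*}
\sum_{k=2}^N(-1)^k\binom{r}{k-1} \;=\; 1-\sum_{j=0}^{r}(-1)^j\binom{r}{j} \;=\; \mathbf 1[r\ge 1],
\end{equation*}
since the last sum equals $(1-1)^r=0$ for $r\ge 1$ and equals $1$ for $r=0$. Summing over $i$ and dividing by $|A_N|$ yields $(i)$.

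For $(ii)$, I would invoke the standard partial-sum identity $\sum_{j=0}^{m-1}(-1)^j\binom{r}{j}=(-1)^{m-1}\binom{r-1}{m-1}$ (valid for $r\ge 1$, provable by induction on $m$ via Pascal's rule, and trivially true for $r=0$), which after the substitution $j=k-1$ produces
\begin{equation*}
\mathbf 1[r\ge 1]-\sum_{k=2}^m(-1)^k\binom{r}{k-1} \;=\; (-1)^{m+1}\binom{r-1}{m-1}.
\end{equation*}
Multiplying by $(-1)^m$ leaves $-\binom{r-1}{m-1}\le 0$, and summing over $i$ and dividing by $|A_N|$ gives $(ii)$. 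I do not anticipate a genuine obstacle: the only step requiring some care is matching the definition of $\gamma_N$ to the counting formula above, and this rests entirely on the convexity of $A_N$.
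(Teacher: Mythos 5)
Your proposal is correct; note, though, that the paper does not actually prove this lemma but simply cites Katz--Sarnak (Corollary 2.4.11, Lemma 2.4.9, Lemma 2.4.12 of \cite{katzsarnak}), so you have supplied a self-contained argument in the same combinatorial spirit. The reduction is sound: because $A_N$ is an interval and the eigenvalues are ordered, a $k$-tuple with smallest and largest entries in $A_N$ lies entirely in $A_N$, so $\gamma_N(k,H)$ and $\sigma_N(H)$ are both determined by the sorted sublist $\mu_1\le\cdots\le\mu_M$ of eigenvalues in $A_N$; grouping $k$-subsets by their minimal index $i$ gives $|A_N|\int_0^s d\gamma_N(k,H)=\sum_i\binom{r_i}{k-1}$ and $|A_N|\int_0^s d\sigma_N(H)=\sum_i\mathbf 1[r_i\ge 1]$, and the rest is the alternating-sum and alternating-partial-sum binomial identities, i.e.\ the Bonferroni argument clumped at each point. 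This is exactly what Katz--Sarnak do, and your write-up is a clean, direct rendering.

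One minor imprecision: after the substitution you display
\begin{equation*}
\mathbf 1[r\ge 1]-\sum_{k=2}^m(-1)^k\binom{r}{k-1}=(-1)^{m+1}\binom{r-1}{m-1},
\end{equation*}
but this does not hold at $r=0$ under the generalized-binomial convention $\binom{-1}{m-1}=(-1)^{m-1}$ (the left side is $0$, the right side would be $1$); it only holds there if you declare $\binom{-1}{m-1}=0$, which is in tension with calling the underlying identity ``trivially true for $r=0$'' via induction. The fix is immediate: for $r=0$ both $\mathbf 1[r\ge 1]$ and $\sum_{k=2}^m(-1)^k\binom{0}{k-1}$ vanish, so the inequality in (ii) reads $0\le 0$ and needs no identity at all. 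With that caveat the argument is complete.
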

We can now derive the  point-wise convergence of the expected empirical spacing
distribution from  Lemma \ref{lemma_combinatoric}.
This point-wise convergence was derived in  \cite{deift1} for unitary invariant ensembles and in \cite{Alsm_Loewe} for orthogonal and symplectic invariant ensembles.
We specify the error estimates of \cite{Alsm_Loewe} in the proof of the subsequent theorem, where we follow the ideas of \cite{katzsarnak} for unitary circular ensembles.  
\begin{theorem}[c.f.~\cite{katzsarnak}, \cite{deift1}, \cite{Alsm_Loewe}]\label{pointwise}
For $\beta=1,2,4$ we have under
Assumption \ref{main_asssumption}
\begin{enumerate}
\item[(i)]
 \begin{align} \mathbb E_{N,\beta} \left(\int_0^s d\gamma_N(k,H) \right) = &\int_{0 \leq z_2 \leq \ldots \leq z_k \leq s}  W_k^{(\beta)}(0, z_2,\ldots,z_k) 
		dz_2 \ldots dz_k \nonumber \\
		+ &\frac{1}{(k-1)!} s^k \frac{1}{|A_N|} C^k k^{k/2} + s^{k-1}   C^k \mathcal{O}(\kappa_N),\label{absch_lim_gamma}
\end{align}
\item[(ii)]
\begin{equation*} 
		\lim_{N \to \infty} \mathbb E_{N,\beta} \left(  \int_0^s d\sigma_N(H)\right) = \sum_{k\geq 			2} (-1)^k \int_{0 \leq z_2 \leq \ldots \leq z_k \leq s}  W_k^{(\beta)}(0, z_2,		
		\ldots,z_k) dz_2 \ldots dz_k.
	\end{equation*}
In particular, we claim that the series on the right hand side of the equation converges. 
\end{enumerate}
\end{theorem}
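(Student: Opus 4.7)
My plan is to prove part (i) by a direct calculation starting from the symmetrised representation (\ref{Darst_E_gamma}) of $\mathbb{E}_{N,\beta}(\int_0^s d\gamma_N(k,H))$ as $\frac{1}{|A_N|}\int_{\mathcal{W}_k\cap A_N^k}\chi_s(t)\,B_{N,k}^{(\beta)}(t)\,dt$. First I would invoke Lemma \ref{result_corr_func}(i) to replace $B_{N,k}^{(\beta)}$ by its limit $W_k^{(\beta)}$ at the cost of a uniform pointwise error $k!\,C^k\,\mathcal{O}(\kappa_N)$. Since $\chi_s$ forces the spread $t_k-t_1$ to lie in $[0,s]$, the integration region has volume at most $|A_N|\cdot s^{k-1}/(k-1)!$, so after dividing by $|A_N|$ this replacement contributes an error of size $s^{k-1}\,k\,C^k\,\mathcal{O}(\kappa_N)$, which, after absorbing $k$ into $C^k$, matches the $\kappa_N$-term in (\ref{absch_lim_gamma}).

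For the main term with $W_k^{(\beta)}$ I would use the additive translational invariance from Lemma \ref{result_corr_func}(ii) and change variables $z_i=t_i-t_1$ for $i=2,\ldots,k$, decoupling the ordered simplex $\{0\leq z_2\leq\ldots\leq z_k\leq s\}$ from the free parameter $t_1$. Since $A_N$ is an interval, the $t_1$-integral over $\{t_1\in A_N:\,t_1+z_k\in A_N\}$ contributes the length $|A_N|-z_k$, so the main term rewrites as
$$\int_{0\leq z_2\leq\ldots\leq z_k\leq s}W_k^{(\beta)}(0,z_2,\ldots,z_k)\,dz\;-\;\frac{1}{|A_N|}\int_{0\leq z_2\leq\ldots\leq z_k\leq s}z_k\,W_k^{(\beta)}(0,z_2,\ldots,z_k)\,dz.$$
Bounding $|W_k^{(\beta)}|$ by $C^k k^{k/2}$ via Lemma \ref{result_corr_func}(iii), the boundary term is dominated by $s^k\,C^k k^{k/2}/((k-1)!\,|A_N|)$, which is precisely the first error term of (\ref{absch_lim_gamma}).

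For part (ii), I would combine Lemma \ref{lemma_combinatoric}(ii) with part (i). The lemma yields, for every $m\leq N$, the sandwich
$$(-1)^m\,\mathbb{E}_{N,\beta}\!\left(\int_0^s d\sigma_N(H)\right)\;\leq\;(-1)^m\sum_{k=2}^m(-1)^k\,\mathbb{E}_{N,\beta}\!\left(\int_0^s d\gamma_N(k,H)\right).$$
Setting $I_k(s):=\int_{0\leq z_2\leq\ldots\leq z_k\leq s}W_k^{(\beta)}(0,z_2,\ldots,z_k)\,dz$ and substituting part (i) on the right, the $\kappa_N$ and $1/|A_N|$ errors vanish as $N\to\infty$ for each fixed $m$, giving $(-1)^m\limsup_N\mathbb{E}_{N,\beta}\int_0^s d\sigma_N\leq(-1)^m\sum_{k=2}^m(-1)^k I_k(s)$. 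The trivial bound $|I_k(s)|\leq s^{k-1}\,C^k k^{k/2}/(k-1)!$ from Lemma \ref{result_corr_func}(iii), together with Stirling's formula, shows $\sum_k|I_k(s)|<\infty$, so the partial sums $S_m(s):=\sum_{k=2}^m(-1)^k I_k(s)$ converge to some $S_\infty(s)$. Letting $m\to\infty$ through even and odd values pinches both $\limsup_N$ and $\liminf_N$ to $S_\infty(s)$, establishing both the convergence of the series and the claimed limit identity.

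The main obstacle is the combinatorial bookkeeping: Lemma \ref{result_corr_func}(i) produces a factor $k!$, Lemma \ref{result_corr_func}(iii) a factor $k^{k/2}$, and the inclusion-exclusion (\ref{combinatorics}) runs over $k$ up to $N$, so one might fear a blow-up when these are combined. The rescue comes from the factor $1/(k-1)!$ generated by integrating over the ordered simplex of spread at most $s$; by Stirling, this dominates both $k!$ and $k^{k/2}$ super-exponentially, so each error series is summable in $k$ uniformly in $N$, and the limit in $N$ can be taken inside the $m$-truncated inclusion-exclusion before $m\to\infty$ without loss.
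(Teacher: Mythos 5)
Your proposal is correct and follows essentially the same route as the paper's proof: the same use of \eqref{Darst_E_gamma}, the replacement $B_{N,k}^{(\beta)}\to W_k^{(\beta)}$ via Lemma \ref{result_corr_func}(i) with the simplex-volume bound controlling the $\kappa_N$-error, the same change of variables exploiting translational invariance (your explicit $|A_N|-z_k$ evaluation of the $t_1$-integral is a slightly more concrete rendering of the paper's factor $1-\prod_j\chi_{A_N}(z_1+z_j)$), and in part (ii) the same sandwich from Lemma \ref{lemma_combinatoric}(ii) combined with the summability in $k$ coming from $k^{k/2}/(k-1)!$ and Stirling.
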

\begin{proof} 
The proof is in the spirit of \cite[chap. 5]{katzsarnak}. 
We take expectations in (\ref{combinatorics}) and consider the representation of the expectation of integrals with respect to $\gamma_N$ in (\ref{Darst_E_gamma}).
It is straightforward to prove the following bound
	\begin{equation*} 
		\frac{1}{|A_N|}\int_{\mathcal{W}_k \cap A_N^k}  \chi_s (t_1,\ldots,t_k) 
		dt_1 \ldots dt_k \leq \frac{s^{k-1}}{(k-1)!},
	\end{equation*}
which, together with the uniform convergence of the correlation functions $B_{N,k}^{(\beta)}(t_1,\ldots,t_k)=W_k^{(\beta)}(t_1,\ldots,t_k) + k!   C^k \mathcal{O}(\kappa_N), \, \, t_1,\ldots, t_k \in A_N$ (Lemma \ref{result_corr_func}), leads to 
	\begin{align*}
		 &\frac{1}{|A_N|}\int_{\mathcal{W}_k \cap A_N^k}  \chi_s 				(t_1,
		 \ldots,t_k)B_{N,k}^{(\beta)}(t_1,\ldots,t_k) dt_1 \ldots 		dt_k 
		 \\
		 =&  \frac{1}{|A_N|}\int_{\mathcal{W}_k \cap A_N^k}  \chi_s 			(t_1,
		 \ldots,t_k)W_k^{(\beta)}(t_1,\ldots,t_k) dt_1 \ldots 			dt_k +  
		\frac{s^{k-1}}{(k-1)!}k!   C^k \mathcal{O}(\kappa_N).
	\end{align*}
Using the translational invariance of $W_k^{(\beta)}$ (see Lemma
\ref{result_corr_func}) together with the change of variables $z_1=t_1, z_i=t_i-t_1, i=2,\ldots, k$ and the definition of $\chi_s$, we have
	\begin{align*}
		&\frac{1}{|A_N|}\int_{\mathcal{W}_k \cap A_N^k}  \chi_s (t_1,
		\ldots,t_k)W_k^{(\beta)}(t_1,\ldots,t_k) dt_1 \ldots 			dt_k 
		\\
		=&  \int_{0 \leq z_2 \leq \ldots \leq z_k \leq s}  W_k^{(\beta)}(0, z_2,
		\ldots,z_k) dz_2 \ldots 			dz_k 
		\\
		-&\frac{1}{|A_N|}\int_{A_N} \left(
		\int_{0 \leq z_2 \leq \ldots \leq z_k \leq s}    W_k^{(\beta)}(0, z_2,\ldots,z_k) 
		\left( 1-\prod_{j=2}^k \chi_{A_N} (z_1+z_j)\right) dz_2 \ldots 			dz_k 
		\right)dz_1
		\\ 
		=& \int_{0 \leq z_2 \leq \ldots \leq z_k \leq s}  W_k^{(\beta)}(0, z_2,\ldots,z_k) 
		dz_2 \ldots dz_k + \frac{1}{(k-1)!} s^k C^k k^{k/2}\mathcal{O}\left(\frac{1}{|A_N|}\right) . 
	\end{align*}
This completes the proof of (i).
In order to prove (ii), we note that by 
the upper bounds on $W_k^{(\beta)}$ (Lemma
\ref{result_corr_func}) and  Stirling's formula we have
  \begin{equation} 
  \label{schranke_W}
 \lim_{N \to \infty}  \mathbb E_{N,\beta} \left(\int_0^s d\gamma_N(k,H) \right) \leq 
 C^k s^{k-1} \left( \frac{1}{\sqrt{k-1}} \right)^{k-1} \xrightarrow{k \to \infty} 0.
  \end{equation} 
To verify that we may interchange the limit $N \to \infty$ with the infinite summation over $k$ in (\ref{combinatorics}) after taking expectations, we consider for $m$ odd (Lemma~\ref{lemma_combinatoric}~$(ii)$)
 \begin{equation}
\sum_{k=2}^m (-1)^k    \mathbb E_{N,\beta} \left(\int_0^s d\gamma_N(k,H) \right) \leq       \mathbb E_{N,\beta} \left(  \int_0^s d\sigma_N(H)\right) 
  \leq \sum_{k=2}^{m+1} (-1)^k    \mathbb E_{N,\beta} \left(\int_0^s d\gamma_N(k,H) \right). 
  \label{theorem3:eq2}
  \end{equation} 
On the one hand we take the   limit inferior and on the other hand the  limit superior 
for $N \to \infty$ in (\ref{theorem3:eq2}). Statement (ii) is then a consequence of the convergence for $N \to \infty$ implied by (i) and (\ref{schranke_W}).
\end{proof}

\begin{remark}\label{univ_mu_beta}
So far, we showed that the limit 
\begin{equation*}
F_{\beta}(s) \coloneqq\lim_{N \to \infty}  \mathbb E_{N,\beta} \left(  \int_0^s d\sigma_N(H)\right)
\end{equation*}
exists and is given by (ii) in Theorem \ref{pointwise}. The continuity of $F_{\beta}$ is also obvious from (\ref{schranke_W}) and (\ref{theorem3:eq2}).
Moreover, we have (see (\ref{DEt_Darst_B}) to (\ref{Def_J}))
\begin{equation*}
W_{k}^{(2)}(t_1,
		\ldots,t_k)=\det \left( K_{2}(t_i, t_j)\right)_{1\leq i,j\leq k},
\end{equation*}
\begin{equation*}
W_{k}^{(\beta)}(t_1,
		\ldots,t_k)=\text{Pf}(K_{\beta}(t_i,t_j)_{1\leq i,j\leq k}  J), \quad \beta=1,4.
\end{equation*}
Hence, the limiting expected
spacing distribution $F_{\beta}$ does neither depend on $V$ nor on the point $a$ nor on  the interval $I_N$ and is  hence universal.
  We will prove in Lemma \ref{lemma_limiting_spa} below that there exists a probability measure $\mu_{\beta}$ such that $F_{\beta}(s) = \int_0^s d\mu_{\beta}$.
\end{remark}
We can now derive an important estimate for the proof of the Main Theorem.
\begin{corollary} 
\label{koro_ew}
For any $\alpha>0, N\in\mathbb N$ and $L=L(N) \in \mathbb N$ we have
\begin{align} 
 			  \left| F_{\beta}(\alpha) - \int_{0}^\alpha  \,
 			d\sigma_N(H) \right| 
 			&\leq \sum_{k=2}^L \left|
 			\mathbb{E}_{N,\beta}\left( \int_{0}^\alpha  \, d\gamma_N(k,H) \right) -
 			\int_{0}^\alpha  \, d\gamma_N(k,H) \right|
 			\label{coro_eq1} 
 			\\ 
 			+&\left( \frac{1}{|A_N|}+ \mathcal{O}(\kappa_N)+  \left( \frac{1}{\sqrt{L-1}} \right)^{L-1} \right) (C 
 			\overline{\alpha})^{L+1}.  
 			\label{coro_eq2}
\end{align}			
\end{corollary}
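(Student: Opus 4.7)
The plan is to decompose the difference $F_\beta(\alpha) - \int_0^\alpha d\sigma_N(H)$ via three intermediate partial sums obtained by truncating the expansions from Lemma \ref{lemma_combinatoric}(i) and Theorem \ref{pointwise}(ii) at level $L$. Setting $G_k := \int_0^\alpha d\gamma_N(k,H)$, $E_k := \mathbb{E}_{N,\beta}(G_k)$, $W_k := \int_{0 \le z_2 \le \cdots \le z_k \le \alpha} W_k^{(\beta)}(0,z_2,\ldots,z_k)\,dz_2 \cdots dz_k$, and $T_L^W := \sum_{k=2}^L (-1)^k W_k$, $T_L^E := \sum_{k=2}^L (-1)^k E_k$, $T_L^G := \sum_{k=2}^L (-1)^k G_k$, I would write
$$F_\beta(\alpha) - \int_0^\alpha d\sigma_N(H) = (F_\beta(\alpha) - T_L^W) + (T_L^W - T_L^E) + (T_L^E - T_L^G) + \left(T_L^G - \int_0^\alpha d\sigma_N(H)\right)$$
and bound the four pieces separately. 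The third piece is $\le \sum_{k=2}^L |E_k - G_k|$ by the triangle inequality and contributes exactly the random sum on the right-hand side of the Corollary.

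For the first piece, taking expectations in Lemma \ref{lemma_combinatoric}(ii) and then letting $N \to \infty$ (using both parts of Theorem \ref{pointwise}) shows that $F_\beta(\alpha)$ is sandwiched between the consecutive partial sums $T_{L-1}^W$ and $T_L^W$, so $|F_\beta(\alpha) - T_L^W| \le W_L$; by the Stirling-based bound (\ref{schranke_W}), $W_L \le C^L\alpha^{L-1}(1/\sqrt{L-1})^{L-1} \le (C\overline{\alpha})^{L+1}(1/\sqrt{L-1})^{L-1}$, which accounts for the $(1/\sqrt{L-1})^{L-1}$ summand in the deterministic parenthesis. For the second piece, the explicit error estimate in Theorem \ref{pointwise}(i) gives
$$|T_L^W - T_L^E| \le \sum_{k=2}^L |W_k - E_k| \le \sum_{k=2}^L \left(\frac{\alpha^k C^k k^{k/2}}{(k-1)!\,|A_N|} + \alpha^{k-1} C^k \,\mathcal{O}(\kappa_N)\right),$$
and the uniform Stirling-bound $k^{k/2}/(k-1)! = \mathcal{O}(1)$ lets the sums be collected into $(C\overline{\alpha})^{L+1}/|A_N|$ and $\mathcal{O}(\kappa_N)(C\overline{\alpha})^{L+1}$ after absorbing factors using $\overline{\alpha} \ge 1$.

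The main obstacle is the fourth piece, which must be bounded \emph{pathwise} even though it is the tail of a random alternating series. Here I would invoke Lemma \ref{lemma_combinatoric}(ii) a second time, now applied pathwise, to sandwich $\int_0^\alpha d\sigma_N(H)$ between $T_{L-1}^G$ and $T_L^G$, yielding the pathwise bound $|T_L^G - \int_0^\alpha d\sigma_N(H)| \le G_L$. I then split the random tail as $G_L \le E_L + |E_L - G_L|$: the fluctuation $|E_L - G_L|$ duplicates the $k = L$ summand already appearing in the random sum from the third piece (the extra copy is absorbed into the generic constant of $(C\overline{\alpha})^{L+1}$), while the deterministic part is handled via $E_L \le W_L + |E_L - W_L|$, which reduces back to precisely the same ingredients controlled in the first two pieces. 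Collecting the four estimates and merging the resulting constants into $C$ yields the announced bound.
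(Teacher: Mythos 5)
Your four-piece additive decomposition is a cosmetic rearrangement of the paper's argument, and the first three pieces are handled correctly. The flaw is in how you close the fourth piece. You bound $\bigl|T_L^G - \int_0^\alpha d\sigma_N(H)\bigr| \leq G_L$ pathwise via Lemma~\ref{lemma_combinatoric}(ii), then split $G_L \leq E_L + |E_L - G_L|$ and assert that the resulting extra copy of $|E_L - G_L|$ ``is absorbed into the generic constant of $(C\overline{\alpha})^{L+1}$.'' That step fails: $|E_L - G_L|$ is a random variable depending on $H$, whereas the bracketed factor multiplying $(C\overline{\alpha})^{L+1}$ in the Corollary is deterministic, and a random quantity cannot be hidden inside a deterministic constant. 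What your argument actually delivers is a bound of the form $\sum_{k=2}^{L}|E_k - G_k| + |E_L - G_L| + (\text{deterministic})$, i.e.\ the $k = L$ summand of the random sum carries coefficient $2$, which is strictly weaker than the Corollary's coefficient $1$. (The weaker bound would still suffice for the downstream Main Theorem, but it does not prove the statement as written.)

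The paper sidesteps this by never isolating $T_L^G - \int_0^\alpha d\sigma_N(H)$ on its own: it feeds Lemma~\ref{lemma_combinatoric}(ii) into a simultaneous two-sided sandwich in which the truncation index on the deterministic $E_\beta(\alpha,\cdot)$ side is always one higher than on the random $\gamma_N$ side (the choices $m_1 = m_2 = L$ for $L$ odd and $m_1 = L+1$, $m_2 = L-1$ for $L$ even achieve exactly this offset). The residual tail terms are then $E_\beta(\alpha,L)$ and $E_\beta(\alpha,L+1)$, both deterministic, and the random sum appears with coefficient exactly $1$. You can repair your proof by merging your third and fourth pieces and estimating $\bigl|T_L^E - \int_0^\alpha d\sigma_N(H)\bigr|$ in a single step: from the $\gamma$-sandwich, $\int_0^\alpha d\sigma_N(H)$ lies between $T_{L-1}^G$ and $T_L^G$, so $T_L^E - \int_0^\alpha d\sigma_N(H)$ lies between $T_L^E - T_L^G$ and $T_L^E - T_{L-1}^G$, both of which have absolute value at most $\sum_{k=2}^{L}|E_k - G_k| + E_L$; the residual $E_L$ is deterministic and you never generate a duplicate $|E_L - G_L|$.
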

\begin{proof}
We set $ \displaystyle 
E_{\beta}(\alpha,k)\coloneqq  \lim_{N \to \infty}  \mathbb E_{N,\beta} \left(\int_0^{\alpha} d\gamma_N(k,H) \right) $
and observe that we  
have (see   (\ref{theorem3:eq2}) and Lemma \ref{lemma_combinatoric})
 for any odd integers $m_1, m_2$    
	\begin{equation}
	\label{ev7:eq3}
	\sum_{k=2}^{m_1} (-1)^k E_{\beta}(\alpha,k) \leq F_{\beta}(\alpha)\leq \sum_{k=2}^{m_2+1} (-1)^k E_{\beta}(\alpha,k), 
	\end{equation}
	\begin{equation}
	\label{ev7:eq4}
	-\sum_{n=2}^{m_1-1} (-1)^n \int_{0}^{\alpha}  \, d \gamma_N(n,H) \leq
	-\int_{0}^\alpha \, d \sigma_N(H) \leq
	  	-\sum_{n=2}^{m_2} (-1)^n \int_{0}^\alpha  \, d \gamma_N(n,H).
	\end{equation}
	For $L\in \mathbb{N}$ we set $m_1=L$, $m_2=L$ if $L$ is odd and 
	   $m_1=L+1$, $m_2=L-1$ if $L$ is even. 
Adding inequalities (\ref{ev7:eq3}) and (\ref{ev7:eq4}) and using the triangle inequality
 yields
 \begin{align*} 
 			&  \left|  F_{\beta}(\alpha) - \int_{0}^\alpha  \,
 			d\sigma_N(H) \right| \nonumber  
 			\leq \sum_{k=2}^L \left|
 			\mathbb{E}_{N,\beta}\left( \int_{0}^\alpha  \, d\gamma_N(k,H) \right) -
 			\int_{0}^\alpha  \, d\gamma_N(k,H) \right| \nonumber 
 			\\ 
 			+& \sum_{k=2}^L
 			\left| \mathbb{E}_{N,\beta} \left( \int_{0}^\alpha  \, d\gamma_N(k,H) \right)
 			- E_{\beta}(\alpha,k) \right| + E_{\beta}(\alpha,L)+E_{\beta}(\alpha,L+1). 
 		\end{align*}
We can further estimate the right hand side of the above inequality using (\ref{absch_lim_gamma}) and (\ref{schranke_W}) together with the geometric series and the boundedness of $k^{k/2}/{(k-1)!}$ (by Stirling's formula).  
\end{proof}
\begin{remark}\label{hinweis_var}
We recall that in order to prove the Main Theorem (Theorem \ref{MAINTHEO}) we
need to provide an estimate on $\mathbb{E}_{N,\beta} \left(\sup_{s \in
\mathbb{R}} \left|\int_{0}^s \, d\sigma_N(H) - F_{\beta}(s)\right|\right)$. Except for the supremum, we can take
expectations in Corollary \ref{koro_ew}. Then the Cauchy-Schwarz inequality suggests 
to provide an appropriate estimate on the variance of $\int_0^{s} d\gamma_N(k,H)$, which is at the heart of the proof of the Main Theorem and is carried out in Section \ref{Main_est}. \end{remark}
We end this section by some results about the limiting spacing distributions $F_{\beta}$, which will be needed in Section \ref{sec:completing}. 

\begin{lemma}[for $\beta=2$ c.f.~\cite{katzsarnak}]\label{lemma_limiting_spa}
Let Assumption  \ref{main_asssumption} hold for $\beta=1,2,4$.
Then there exist  probability  measures $\mu_{\beta}$ on $\mathbb R$  such that 
	\begin{equation}\label{pointwise.1}
	F_{\beta}(s) \coloneqq \lim_{N \to \infty} \mathbb E_{N,\beta} \left(  \int_0^s d\sigma_N(H)\right) = \int_0^s d\mu_{\beta}.
\end{equation}
Moreover, 
there exist constants $A_{\beta} \geq 1, B_{\beta} \geq 0$ such that 
\begin{equation} 
\label{tail_est}
1-\int_0^s d\mu_{\beta} \leq A_{\beta} e^{-B_{\beta}s^2}, \quad s\geq 0.
\end{equation}
\end{lemma}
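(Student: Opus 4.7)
The plan is to verify that $F_\beta$ defines a probability distribution on $[0,\infty)$, and then to establish the Gaussian tail bound. Three easy properties of $F_\beta$ come first: $F_\beta(0)=0$ and $F_\beta$ is non-decreasing and right-continuous, since it is the pointwise limit of non-decreasing right-continuous functions vanishing at $0$ (with continuity already noted in Remark~\ref{univ_mu_beta}); and $F_\beta(s)\leq 1$, since $\sigma_N([0,\infty))$ equals $|A_N|^{-1}$ times the number of consecutive pairs $\tilde\lambda_i, \tilde\lambda_{i+1}\in I_N$, which is at most $|A_N|^{-1}\#\{i:\tilde\lambda_i\in I_N\}$, whose expectation converges to $1$ by (\ref{spec_density}) and the definition of $|A_N|$.

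For the tail estimate, I would relate $1-F_\beta(s)$ to the hole probability of the limiting bulk point process. A standard inclusion-exclusion rearrangement of the series in Theorem~\ref{pointwise}(ii) yields the identity
\begin{equation*}
1-F_\beta(s) = -Q_\beta'(s), \qquad Q_\beta(s) := \sum_{k\geq 0}\frac{(-1)^k}{k!}\int_{[0,s]^k}W_k^{(\beta)}(t_1,\ldots,t_k)\,dt_1\cdots dt_k,
\end{equation*}
where $Q_\beta(s)$ represents the probability that the limiting bulk process places no points in $[0,s]$. For $\beta=2$ this is the Fredholm determinant $Q_2(s)=\det(I-K_2|_{[0,s]})$ of the sine kernel, whose Gaussian decay is classical (des Cloizeaux--Mehta, Dyson, Widom); for $\beta=1,4$ it is a Fredholm Pfaffian with matrix kernel $K_\beta$, and analogous Gaussian bounds are known. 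Differentiating gives (\ref{tail_est}), whence $F_\beta(\infty)=1$, so $\mu_\beta$ defined by $\mu_\beta([0,s]):=F_\beta(s)$ is a probability measure.

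The principal difficulty lies in justifying the Fredholm--Pfaffian representation of $Q_\beta$ for $\beta=4$ and in proving its Gaussian decay. As emphasised around (\ref{eq_square_int.1}), the entry $\mathsf{I}_4$ of $K_4$ is not square-integrable, so the naive expansion of $Q_4$ does not converge absolutely; one must absorb the $\pm\tfrac{1}{4}$ constant into a shifted kernel of the type (\ref{Def_R}) before applying Hadamard-type bounds to the Pfaffian summands. Once this regularisation is in place, the Gaussian decay of $Q_\beta$ reduces to standard trace-norm estimates for $K_\beta|_{[0,s]}$ on intervals of length $s$, yielding the constants $A_\beta$ and $B_\beta$ in (\ref{tail_est}).
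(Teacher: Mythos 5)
Your overall strategy is the same as the paper's: identify $1-F_\beta(s)$ with the negative derivative of the bulk gap probability, recognise the $\mathsf{I}_4$ integrability obstruction for $\beta=4$, and obtain (\ref{tail_est}) from Gaussian decay of that gap probability. The preliminary step is argued a bit differently — you verify directly that $F_\beta$ is a sub-distribution function ($F_\beta(0)=0$, monotone, $\leq 1$, continuous by Remark~\ref{univ_mu_beta}) and then promote it to a full distribution once the tail bound is in hand, whereas the paper normalises the expected spacing measure and invokes Feller's selection theorem to extract $\mu_\beta$, proving continuity of the limit by a separate sequence argument. Both routes work and yours is arguably more elementary.

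However, there are two genuine gaps in what you have written. First, the "standard inclusion-exclusion rearrangement" producing the identity $1-F_\beta(s)=-Q_\beta'(s)$, and the underlying Fredholm/Pfaffian expansion $Q_\beta(s)=1+\sum_{k\geq 1}(-1)^k\int_{0\leq x_1\leq\cdots\leq x_k\leq s}W_k^{(\beta)}$, are not merely formal for $\beta=1,4$: the paper explicitly flags that passing to the limit $N\to\infty$ in the finite-$N$ identity requires delicate control of the matrix kernels and defers to an analytical proof elsewhere. Your remark about absorbing the $\pm\tfrac14$ into a shifted kernel of the type (\ref{Def_R}) is the right idea, but it needs to be carried out, not just gestured at. Second, and more significantly, you assert that Gaussian decay of the $\beta=1,4$ gap probabilities is "known", but do not cite a result or give an argument, and the paper in fact proves it. The paper's derivation hinges on the Painlev\'e V/Hastings--McLeod asymptotics of $v(s)=\sigma(s)/s$, and the passage to $\beta=1,4$ requires more than the $\beta=2$ bound: one needs the auxiliary inequality $\sqrt{-v'(2\pi s)}\leq C(-v(2\pi s))$, which the paper extracts from the sign condition $G_4'(s)\leq 0$ (i.e.~from (\ref{mubeta3})) rather than from the expansion of $v$ alone. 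Without that inequality you cannot control $G_1'$ and $G_4'$ via (\ref{G_1})--(\ref{G_4}), so the $\beta=1,4$ tail bound is not actually established by your sketch.
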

\begin{proof}
We observe that the total mass of $\sigma_N(H)$ is the number of eigenvalues that lie in $I_N$ reduced by one and normalised by $1/|A_N|$. Hence, we can calculate (by Remark \ref{intout}) that under Assumption \ref{main_asssumption}
\begin{equation}\label{total_mass}
\mathbb E_{N,\beta}\left(   \int_{\mathbb R} \, d\sigma_N(H)\right)=1-\frac{1}{|A_N|}+ \mathcal{O}\left( \kappa_N\right).
\end{equation}
Similar arguments as those that led to the existence of $  \lim_{N\to \infty} \mathbb E_{N,\beta} \left(  \int_0^s d\sigma_N(H)\right)$
then lead to the existence of
\begin{equation*}
\lim_{N \to \infty}\int_{\mathbb R} f \, d\nu_N, \quad \quad  \nu_{N} (A) \coloneqq \frac{1}{\mathbb E_{N,\beta}\left(   \int_{\mathbb R} \, d\sigma_N(H)\right)} \mathbb E_{N,\beta}\left(   \int_A \, d\sigma_N(H)\right)
\end{equation*}
for all continuous functions $f$ that vanish at infinity.
Applying  \cite[Section VIII.1, Theorem 2]{feller} and then \cite[Section VIII.1, Theorem 1]{feller}  to the probability measures $\nu_N$ shows the existence of a limiting measure $\mu_{\beta}$ with $\mu_{\beta} (\mathbb R) \leq 1$
and 
	\begin{equation} \label{pointwise.2}
	F_{\beta}(s)  = \int_0^s d\mu_{\beta} \eqqcolon H_{\beta}(s)
\end{equation}
 for every point of continuity   of $H_{\beta}$.
 To show the continuity of $H_{\beta}$, suppose that   $H_{\beta}$ has a discontinuity at $x_0$  and  let $(y_n)_{n \in \mathbb N}, (z_n)_{n \in \mathbb N}$ denote sequences with $y_n \nearrow x_0$, $z_n \searrow x_0$ as $n$ tends to infinity. Without loss of generality $z_n,y_n$ can be assumed to be points of continuity of $H_{\beta}$, since the  
finite total mass of $\mu_{\beta}$  implies that $H_{\beta}$ may have a countable number of  discontinuities only. By (\ref{pointwise.2}) and the continuity of $F_{\beta}$ we derive that the large $n$ limits of $H_{\beta}(z_n)$ and $H_{\beta}(y_n)$ coincide. By the monotonicity of $H_{\beta}$, these limits have to be equal to $H_{\beta}(x_0)$, proving the continuity of 
$H_{\beta}$. Hence, (\ref{pointwise.2}) is valid for all $s>0.$

In order to show (\ref{tail_est}), we use the connection between the limiting spacing distribution and  gap probabilities, which are given by
\begin{equation*} 
G_{\beta} (s) := \begin{cases} 
\det(1-K_2|_{L^2(0,s)}), & \text{for } \beta =2 
\\ 
\sqrt{\det(1-K_4|_{L^2(0,s) \times L^2(0,s)})}, & \text{for } \beta=4 \\
\sqrt{{\det}_2(1-K_1|_{L^2(0,s) \times L^2(0,s)})}, & \text{for } \beta=1
 \end{cases},
\end{equation*} 
 where ${\det}_2$ denotes the regularised 2-determinant   (see \cite[(4.50)]{deift2}),  
 $K_2|_{L^2(0,s)}$ denotes the integral operator on $L^2|_{(0,s)}$ with kernel $K_{2}$      and  for $\beta=1,4$ 
 the term  $K_{\beta}|_{L^2(0,s) \times L^2(0,s)}$ denotes the operator on $L^2|_{(0,s)} \times L^2 |_{(0,s)}$ with matrix kernel $K_{\beta}$.
The first step towards (\ref{tail_est}) is to show  
\begin{equation} \label{mubeta2}
		G_{\beta}(s) =1+ \sum_{k=1}^{\infty} (-1)^k \int_{0 \leq x_1\leq  \ldots \leq x_{k} \leq s} W_k^{(\beta)}(x_1,\ldots,x_k) \, dx_1 \ldots dx_k.
	\end{equation} 
	For $\beta=2$ equation (\ref{mubeta2}) is obvious from the standard definition of Fredholm determinants. For $\beta=1$ and $\beta=4$, 
heuristically, equation (\ref{mubeta2}) is  obtained as the   limit of the finite $N$ version, which reads  (combining \cite[p. 108/109]{deift1} and \cite[(4.66)]{deift2}) 
\begin{align*}
\mathbb P_{N,\beta} \left( \{\lambda_1,\ldots,\lambda_N\} \cap (0,s) =\emptyset \right)
&= \sum_{k=0}^N (-1)^k \int_{0 \leq x_1\leq  \ldots \leq x_{k} \leq s} B_{N,k}^{(\beta)}(x_1,\ldots,x_k) \, dx_1 \ldots dx_k
\\&=  \sqrt{\det(1-\hat{K}_{N,\beta}|_{L^2(0,s) \times L^2(0,s)})}.
\end{align*}
 This finite $N$ version is well known (see e.g.~\cite{deift1}, \cite{deift2}), but in order to make the  approximation arguments required for (\ref{mubeta2}) rigorous one would need subtle information on the convergence of the matrix kernels. Relation  (\ref{mubeta2}) can instead be proven by analytical arguments (see \cite[Section~7]{diss}). 
 
From (\ref{mubeta2}) we can derive the following relation by a straightforward calculation (see \cite[p.\ 126]{deift1}):
\begin{equation}\label{mubeta3}
-G'_{\beta}(s) =1- \int_{0}^{s} \, d\mu_{\beta}.
\end{equation}
Hence, we will prove (\ref{tail_est}) by deriving estimates on $|G_{\beta}'|$. 
The main ingredient for these estimates is the fact that $G_{\beta}$ can be represented in terms  of a Painlev\'e transcendent as follows:
Let 
 $\sigma(s)$ be given as the solution of the differential equation 
      \begin{equation*}
    			(s \sigma '')^2  + 4 (s \sigma' - \sigma) \, (s \sigma' -\sigma + (\sigma')^2)=0
   	\end{equation*}
with $   
   			\sigma(s) \sim -\frac{s}{\pi}-\left( \frac{s}{\pi} \right)^2 - \frac{s^3}{\pi^3} + \mathcal{O}(s^4)  \text{ for } s \to 0.  $
   					Then, with  the notation 
 $v(s)\coloneqq \frac{\sigma(s)}{s}, v(0)\coloneqq \frac{1}{\pi}$,
 it is well-known (see  \cite{JMMS},   \cite{AGZ},  \cite{forrester_witte}) that 
   	\begin{align*}
			G_2(s)&=\exp \left( \int_0^{\pi s }v(t) dt \right),  \quad 
		G_1(s)= \exp \left(- \frac{1}{2} \int_0^{\pi s } \sqrt{ -v'(t) } dt \right)  \sqrt{G_2(s)}, 		
	 \\
   		G_4(s/2)&=\frac{1}{2} \left( G_1(s) + \frac{G_2(s)}{G_1(s)} \right).  
   \end{align*}
   For large $s$ the following asymptotic expansion of $v$ is available in the literature   (see e.g.~\cite[(1.38)]{deift3}, \cite[Remark 3.6.5]{AGZ})
  \begin{equation}
\label{asymp_v_1}
v(s)= -\frac{s}{4}- \frac{1}{4s} + \mathcal{O}\left(  \frac{1}{s^2}\right), \quad s\to \infty.
\end{equation}     
We write the derivatives of the gap probabilities as 
\begin{align}
G_2'(s)& = G_2(s) v(\pi s) \pi,
\\
G_1'(s) & = G_1(s) \frac{\pi}{2} \left( v(\pi s)  - \sqrt{-v'(\pi s)}\right),
\label{G_1}
\\
G_4'(s) & = G_1'(2s) + \frac{\pi G_2(2s)}{2 G_1(2s)} \left( v(2\pi s) + 
\sqrt{-v'(2\pi s)} \right).
\label{G_4}
\end{align}  
  For $\beta=2$, (\ref{asymp_v_1}) leads to 
  \begin{equation}
 \label{est_G2} 
  G_2(s) \leq C e^{-\frac{\pi^2}{8} s^2} s^{-\frac{1}{4}}, \quad \quad  |G_2'(s)|\leq C s^{\frac{3}{4}}e^{-\frac{\pi^2}{8}s^2},
 \end{equation} 
 completing the proof of (\ref{tail_est}) for $\beta=2$.

To derive estimates on $G_1'$ and $G_4'$, we obtain a relation between $v$ and its derivative first. We insert (\ref{G_1}) into (\ref{G_4}) to obtain an expression for $G_4'$ and then  rearrange  $G_4' \leq 0$ (see (\ref{mubeta3})) to
 \begin{equation*}
\sqrt{-v'(2 \pi s )} \leq \frac{1+ H(2s)^{-2}}{1-H(2s)^{-2}} (-v(2 \pi s))
 \end{equation*} 
 with 
 \begin{equation*}
  H(s) \coloneqq \exp \left( \frac{1}{2} \int_0^{\pi s} \sqrt{-v'(t)} dt \right)=\frac{\sqrt{G_2(s)}}{G_1(s)}.
 \end{equation*}
 We obtain the following two estimates:
 \begin{enumerate}
 \item[(i)]
 By the behaviour of $v(s)$ for small $s$, we have  $v'(0)=-\frac{1}{\pi^2}$ and hence $H(s)>1$ for $s>0$. Moreover, $H$ is increasing by definition. Thus, $ \frac{1+ H(2s)^{-2}}{1-H(2s)^{-2}} $ is bounded away from $s=0$ and we have 
 \begin{equation}
\label{ineq_v}
 \sqrt{-v'(2 \pi s )} \leq C(-v(2 \pi s)) \leq Cs, \quad s \geq 1. 
 \end{equation} 
 \item[(ii)]
 We have by the Cauchy-Schwarz inequality:
 \begin{equation}
\label{est_H} H(s) \leq \exp \left(  \frac{1}{2} \sqrt{\pi s} \sqrt{v(0) - v(\pi s)} \right)\leq e^{Cs}.
\end{equation}
 \end{enumerate}
  For $\beta=1$, we use $ G_1(s)=\frac{\sqrt{G_2(s)}}{H(s)}$. Then  $H >1$ together with (\ref{est_G2}) implies 
 $
  G_1(s) \leq C e^{- \frac{\pi^2}{16} s^2}.
$
Hence,  for $s \geq 1$ we have
 \begin{equation}
\label{est_G1} 
  |G_1'(s)| \leq Cs e^{- \frac{\pi^2}{16} s^2}.
  \end{equation}
  For $\beta=4$, we consider (\ref{G_4}) and then use (\ref{est_G1}), (\ref{est_G2}), (\ref{est_H}) and (\ref{ineq_v}) to obtain the desired result. This completes the proof of (\ref{tail_est}), which also implies that  $\mu_{\beta}$ is a probability measure.
\end{proof}
%
%
\section{The Main Estimate}\label{Main_est}
		As suggested in Remark \ref{hinweis_var}, the main result of this section is
		an estimate on the variance of $\gamma_N(k,H)$ (see Theorem \ref{lemmavar.1}). The proof of Theorem  \ref{lemmavar.1} is divided into the proofs of some intermediate results (Lemma \ref{VarHilfslemma} and Lemma \ref{var_zwischenerg}) and finally reduced to the proof of the Main Estimate (Lemma \ref{fundamental_est}).
		\begin{theorem} \label{lemmavar.1}
		Under Assumption \ref{main_asssumption} there exists a constant $C\in \mathbb{R}$ such
		that for $\beta \in \{1,2,4 \}$, $2 \leq k \leq N$ 
		and $\alpha>0$ we have $$ \mathbb{V}_{N,\beta}\left( \int_0^{\alpha} d\gamma_N(k,H) \right)\leq 
				\overline{\alpha}^{2k} C^{k} \left( k^k\, \mathcal{O}\left(\frac{1}{|A_N|}\right)
				+\mathcal{O}\left(\kappa_N\right) \right).
				$$			
		\end{theorem}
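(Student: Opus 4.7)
The plan is to compute $\mathbb{E}_{N,\beta}\bigl[(\int_0^\alpha d\gamma_N(k,H))^2\bigr]$ explicitly as a sum of integrals against correlation functions and then extract cancellations against $(\mathbb{E}_{N,\beta}[\int_0^\alpha d\gamma_N(k,H)])^2$. Squaring the defining sum for $\gamma_N(k,H)$, one obtains a double sum over ordered pairs of $k$-element index sets $(I,J)$, which I group by $m := |I \cap J| \in \{0,1,\ldots,k\}$. Using the symmetry of $R_{N,N}^{(\beta)}$ (Remark \ref{intout}) and Definition \ref{defR} in the spirit of the derivation leading to (\ref{Darst_E_gamma}), each class $m$ produces an integral of $\chi_\alpha(t_I)\chi_\alpha(t_J)$ against $B_{N,2k-m}^{(\beta)}$ over the $2k-m$ distinct variables, weighted by a combinatorial factor of order $\binom{k}{m}^2 m!$.

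For the contact terms $m\geq 1$, the diameter constraints imposed by the two $\chi_\alpha$-factors confine $2k-m-1$ of the free coordinates to an $\overline{\alpha}$-neighbourhood of a single reference coordinate ranging over $A_N$. Combined with the crude bound $|B_{N,2k-m}^{(\beta)}| \leq C^{2k-m}(2k-m)^{(2k-m)/2}$ from Lemma \ref{result_corr_func}(iii), these terms contribute at most $\overline{\alpha}^{2k-m}\, C^k k^k / |A_N|$ after dividing by the prefactor $|A_N|^2$ and summing over $m\geq 1$, fitting inside the claimed bound.

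For $m=0$ I have to isolate the cancellation against $(\mathbb{E}_{N,\beta}[\int_0^\alpha d\gamma_N(k,H)])^2$. First I replace each $B_{N,\cdot}^{(\beta)}$ by its limit $W_\cdot^{(\beta)}$ using Lemma \ref{result_corr_func}(i), absorbing a total error of the claimed form $\overline{\alpha}^{2k} C^k \mathcal{O}(\kappa_N)$. Then I expand $W_{2k}^{(\beta)}(t_I,t_J) - W_k^{(\beta)}(t_I)\,W_k^{(\beta)}(t_J)$ via (\ref{expansion_corr_limit}): partitions of $\{1,\ldots,2k\}$ in which every block lies entirely inside $I$ or entirely inside $J$ reproduce exactly the product $W_k^{(\beta)}\cdot W_k^{(\beta)}$, while the surviving partitions feature at least one \emph{connecting} block $S$ visiting both halves. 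In such a block the cyclic product $F_{S,\sigma}^{K_\beta}$ contains at least two kernel factors of mixed type (one $I$-argument and one $J$-argument); integrating one $I$-variable and one $J$-variable against these two factors and using the square integrability (\ref{eq_square_int.1}) of the relevant kernel entries together with translation invariance of $W_k^{(\beta)}$ (Lemma \ref{result_corr_func}(ii)) supplies the additional factor $1/|A_N|$ needed to match the stated estimate.

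The chief obstacle is the $\beta=4$ case, in which $\mathsf{I}_4$ is not square-integrable, so a naive Cauchy--Schwarz on a connecting block containing $\mathsf{I}_4$-factors diverges. The strategy is to exploit the sign alternation $\pm\tfrac{1}{4}$ in the definition (\ref{Def_R}) of $\mathcal{R}_{u,v}$: by the cyclic structure of $F_{S,\sigma}^{K_4}$ the signs of the differences $t_{\sigma(i)}-t_{\sigma(i+1)}$ around each cycle are correlated, so after a suitable telescoping/rearrangement each $\mathsf{I}_4$-factor can be replaced by a corresponding $\mathcal{R}_{u,v}$-factor to which the uniform bounds (\ref{int_R}) apply. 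The shifts $u,v$ produced by the change of variables to a single reference point on each cycle are bounded by $C\alpha$, and the resulting $\overline{\alpha}$-factors in (\ref{int_R}) are what gives rise to the overall $\overline{\alpha}^{2k}$ envelope. Carrying out this cycle-by-cycle rearrangement and then applying Cauchy--Schwarz on the modified kernels is the content of the Main Estimate (Lemma \ref{fundamental_est}), and is where almost all the technical work of this section lies.
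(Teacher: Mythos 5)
Your decomposition of the second moment into overlap classes $m=|I\cap J|$, the crude bound on the $m\geq 1$ terms, the replacement $B\to W$ at cost $\mathcal{O}(\kappa_N)$, and the expansion of $V_k^{(\beta)}=W_{2k}^{(\beta)}-W_k^{(\beta)}W_k^{(\beta)}$ into partitions with at least one connecting block all follow the paper's route (Lemmas \ref{VarHilfslemma}, \ref{var_zwischenerg}, Remark \ref{Rep_D}). Your treatment of $\beta=1$ and of partitions with two square-integrable mixed-type factors is also correct.

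There is, however, a genuine gap in the $\beta=4$ argument. You write that "each $\mathsf{I}_4$-factor can be replaced by a corresponding $\mathcal{R}_{u,v}$-factor" after "a suitable telescoping/rearrangement", but the relation $\mathsf{I}_4=\mathcal{R}_{u,v}\pm\tfrac14$ is a decomposition, not a replacement; the sign parts do not vanish. Expanding the product of the $2\theta$ mixed-type factors yields $2^{2\theta}$ cross terms. The terms with at least two $\mathcal{R}$-factors go by Cauchy--Schwarz exactly as for $\beta=1$, and the terms with exactly one $\mathcal{R}$-factor can be handled because, after the change of variables to a single reference pair $(t_{\mu_1},t_{\nu_1})$, \emph{all} the sign parts are fixed by $\mathrm{sgn}(t_{\mu_1}-t_{\nu_1})$, so they collapse to a single overall sign and the one remaining $\mathcal{R}$-factor can be integrated via (\ref{int_R}). (Note: this is what correlates the signs --- a fixed reference pair, not the cyclic structure as you suggest.) But the term with \emph{no} $\mathcal{R}$-factors survives: since all type-(iia) sign parts agree and all type-(iib) sign parts are their negatives, the product is the nonzero constant $(-1)^{\theta}(1/4)^{2\theta}$, leaving the integral of the product of type-(i) factors over $A_N^{2k}$, which is generically of size $|A_N|^2$ --- far too large. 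The paper's way out is a separate cancellation argument: Lemma \ref{lemma_G} shows, by pairing $d$-configurations under the involution $(d_1,\dots,d_{r-2})\mapsto(h(d_{r-2}),\dots,h(d_1))$ and using the (anti)symmetry of the entries of $K_4$, that after summing over the $d$-indices this remainder integral vanishes identically (equation (\ref{vanish_R})). Your proposal contains no counterpart of this cancellation, and "telescoping" will not supply it, since the sign factors neither telescope within the product nor vanish upon integration by themselves.
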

For later reference, we note the following corollary which can directly be obtained from Theorem \ref{lemmavar.1}.
\begin{corollary}
For $\alpha>0$ and $L>2$ we can calculate (under Assumption \ref{main_asssumption}) for some $C>1$ 
\begin{align*}
\sum_{k=2}^L \sqrt{\mathbb{V}_{N,\beta}\left( \int_0^{\alpha} d\gamma_N(k,H) \right)} 
&\leq\mathcal{O}\left(\frac{1}{\sqrt{|A_N|}}\right) \sum_{k=2}^L 
  \overline{\alpha}^{k} C^{k}\sqrt{(2k)^k}\, 	+\mathcal{O}\left(\sqrt{\kappa_N}\right)\sum_{k=2}^L\overline{\alpha}^{k} C^{k}  
  \\&= \left(\mathcal{O}\left(\frac{1}{\sqrt{|A_N|}}\right)L^{L/2}	+\mathcal{O}\left(\sqrt{\kappa_N}\right)\right)(C\overline{\alpha})^{L+1}.
\end{align*}
\end{corollary}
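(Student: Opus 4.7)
The plan is to derive the corollary as a direct consequence of Theorem \ref{lemmavar.1} by taking square roots term-by-term and then summing in $k$, the only subtlety being the careful tracking of the generic constant $C$ (which may be enlarged line by line in accordance with Remark \ref{remark_setting}).

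First, I would apply Theorem \ref{lemmavar.1} to obtain, for each $k$ with $2\leq k\leq L$,
\begin{equation*}
\mathbb{V}_{N,\beta}\Bigl( \int_0^{\alpha} d\gamma_N(k,H) \Bigr)\;\leq\;\overline{\alpha}^{2k} C^{k} k^{k}\,\mathcal{O}\bigl(1/|A_N|\bigr) \;+\; \overline{\alpha}^{2k} C^{k}\,\mathcal{O}(\kappa_N).
\end{equation*}
Using $\sqrt{a+b}\leq\sqrt{a}+\sqrt{b}$ for $a,b\geq 0$ and then absorbing the factor $\sqrt{C^k}$ together with a harmless factor $2^{k/2}$ into a new generic constant $C$ (still denoted $C>1$), I obtain
\begin{equation*}
\sqrt{\mathbb{V}_{N,\beta}\Bigl( \int_0^{\alpha} d\gamma_N(k,H) \Bigr)}\;\leq\; \overline{\alpha}^{k} C^{k}\sqrt{(2k)^{k}}\,\mathcal{O}\bigl(1/\sqrt{|A_N|}\bigr)\;+\;\overline{\alpha}^{k} C^{k}\,\mathcal{O}\bigl(\sqrt{\kappa_N}\bigr).
\end{equation*}
Summing over $k=2,\dots,L$ and pulling the $N$-dependent error factors out of the sum (they are independent of $k$) yields exactly the first displayed inequality of the corollary.

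Next I need to simplify each of the two resulting $k$-sums to the closed form claimed in the second line. For the first sum, the function $k\mapsto(2k)^{k}$ is increasing on $k\geq 1$ (since its logarithm has derivative $\log(2k)+1>0$), so $\sqrt{(2k)^{k}}\leq\sqrt{(2L)^{L}}=2^{L/2}L^{L/2}$ for every $k\leq L$. Thus
\begin{equation*}
\sum_{k=2}^{L}\overline{\alpha}^{k}C^{k}\sqrt{(2k)^{k}}\;\leq\;2^{L/2}L^{L/2}\sum_{k=2}^{L}(C\overline{\alpha})^{k}.
\end{equation*}
After enlarging $C$ once more if necessary so that $C\overline{\alpha}\geq 2$ (possible since $\overline{\alpha}\geq 1$), the geometric series on the right is dominated by $(C\overline{\alpha})^{L+1}$. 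Absorbing the remaining factor $2^{L/2}$ into the base of the exponential (so $2^{L/2}(C\overline{\alpha})^{L+1}\leq(C'\overline{\alpha})^{L+1}$ with $C':=\sqrt{2}\,C$, again renamed $C$) gives $L^{L/2}(C\overline{\alpha})^{L+1}$. The same geometric-series argument applied without the extra $\sqrt{(2k)^{k}}$ factor treats the second sum, producing $(C\overline{\alpha})^{L+1}$. Adding the two contributions and factoring out $(C\overline{\alpha})^{L+1}$ delivers the claimed bound.

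There is no genuine obstacle here: the estimate is a purely algebraic consequence of Theorem \ref{lemmavar.1}. The only item requiring care is bookkeeping, namely making sure that each successive enlargement of the generic constant $C$ is valid (it is, by Remark \ref{remark_setting}, since these redefinitions depend only on universal combinatorial factors like $2$ and $\sqrt{2}$, not on $N$, $\alpha$, or $L$), and that $C\overline{\alpha}\geq 2$ so that the tail geometric-series bound $\sum_{k=2}^{L}x^{k}\leq x^{L+1}$ is available.
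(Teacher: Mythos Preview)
Your proof is correct and follows exactly the approach implied by the paper, which states only that the corollary ``can directly be obtained from Theorem \ref{lemmavar.1}'' without spelling out details. You have simply filled in the routine steps (square root, $\sqrt{a+b}\le\sqrt a+\sqrt b$, monotonicity of $(2k)^k$, geometric-series bound with $C\overline\alpha\ge 2$, and absorbing combinatorial factors into the generic constant $C$), all of which are unproblematic.
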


We start with the introduction of some notation and prove an intermediate result (Lemma \ref{VarHilfslemma}) for the proof of Theorem \ref{lemmavar.1}. 
		%
		\begin{definition} \label{defD}
			For $\beta \in \{1,2,4 \}$, $N \in \mathbb N$, $k \in \mathbb{N}$ with  $2k \leq N$ 
			 we set 
			\begin{equation*}
 				D_{N,k}^{(\beta)} (t',t'')\coloneqq B_{N,2k}^{(\beta)} (t',t'')
				-B_{N,k}^{(\beta)} (t') B_{N,k}^{(\beta)} (t''),\quad t',t'' \in \mathbb{R}^{k}.
			\end{equation*}  
	\end{definition}
	%
	\begin{lemma} \label{VarHilfslemma} 
		Under Assumption \ref{main_asssumption} there exists a constant $C\in \mathbb{R}$  such that for
		 $\alpha >0$, $\beta \in \{1,2,4 \}$,  $2k \leq N$ we have 	 
			\begin{align}
				 & \mathbb{V}_{N,\beta}\left( \int_0^{\alpha} d\gamma_N(k,H) \right)
				\leq  \frac{1}{|A_N|} \,\,C^{k}\, \,  k^k \, (\max (1,2
				\alpha))^{2k} 
				\nonumber				
				\\ & \hskip3cm+  \frac{1}{|A_N|^2 (k!)^2} 
				\left| \int_{A_N^{2k}} \chi_{\alpha}(t')\chi_{\alpha}(t'') D_{N,k}^{(\beta)} (t',t'') dt'dt'' 
				\right|.  
				\label{Int_F_k_D_N_k}
			\end{align}
	\end{lemma}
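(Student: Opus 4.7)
The plan is to compute $\mathbb{V}_{N,\beta}\bigl(\int_0^\alpha d\gamma_N(k,H)\bigr)$ directly as second moment minus square of mean, and organise the resulting double sum by $\ell \coloneqq |I \cap J|$, the overlap between the two index sets. Writing $f = \chi_\alpha$ as a symmetric function of $k$ variables, we have $\int_0^\alpha d\gamma_N(k,H) = |A_N|^{-1} \sum_{|I|=k} f(\lambda_I)$, so
$$\mathbb{E}_{N,\beta}\!\left[\left(\int_0^\alpha d\gamma_N(k,H)\right)^{\!2}\right] = \frac{1}{|A_N|^2} \sum_{\ell=0}^{k} \sum_{\substack{|I|=|J|=k\\ |I\cap J|=\ell}} \mathbb{E}_{N,\beta}\bigl[f(\lambda_I) f(\lambda_J)\bigr].$$
For each $\ell$ the inner expectation can be re-expressed via the $(2k-\ell)$-point correlation function: using the invariance from Remark \ref{intout} and rescaling, it becomes $\frac{1}{\ell!((k-\ell)!)^2} \int \chi_\alpha(s,u)\chi_\alpha(s,v) B_{N,2k-\ell}^{(\beta)}(s,u,v)\, ds\, du\, dv$, where $s \in A_N^\ell$ and $u,v \in A_N^{k-\ell}$, and where the combinatorial factor counts the number of ordered distinct tuples in $A_N^{2k-\ell}$ representing a fixed pair $(I,J)$.

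The disjoint contribution ($\ell=0$) thus equals $\frac{1}{|A_N|^2 (k!)^2} \int_{A_N^{2k}} \chi_\alpha(t') \chi_\alpha(t'') B_{N,2k}^{(\beta)}(t',t'')\, dt'\, dt''$. The square of the mean, using the representation $\mathbb{E}_{N,\beta}\!\left[\int_0^\alpha d\gamma_N(k,H)\right] = \frac{1}{|A_N| k!} \int_{A_N^k} \chi_\alpha(t) B_{N,k}^{(\beta)}(t)\, dt$ implicit in (\ref{Darst_E_gamma}), is the same expression with $B_{N,2k}^{(\beta)}(t',t'')$ replaced by $B_{N,k}^{(\beta)}(t') B_{N,k}^{(\beta)}(t'')$. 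Subtracting yields exactly the $D_{N,k}^{(\beta)}$ term on the right-hand side of (\ref{Int_F_k_D_N_k}).

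It then remains to bound the overlapping contributions $1 \leq \ell \leq k$ by the first term of (\ref{Int_F_k_D_N_k}). For this I would combine the uniform bound $|B_{N,2k-\ell}^{(\beta)}| \leq C^{2k-\ell}(2k-\ell)^{(2k-\ell)/2}$ from Lemma \ref{result_corr_func}(iii) with the elementary volume estimate
$$\int \chi_\alpha(s,u)\, \chi_\alpha(s,v)\, ds\, du\, dv \leq |A_N|\, (2\alpha)^{2k-\ell-1},$$
which follows because, after fixing $s_1 \in A_N$, each of the remaining $2k-\ell-1$ coordinates of $(s,u,v)$ must lie within $\alpha$ of $s_1$ by the two constraints $\chi_\alpha(s,u)=1$ and $\chi_\alpha(s,v)=1$. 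The $\ell$-th summand is thereby bounded by $\frac{C^{2k-\ell}(2k-\ell)^{(2k-\ell)/2}(2\alpha)^{2k-\ell-1}}{|A_N|\, \ell!((k-\ell)!)^2}$. Using the crude bound $(2k-\ell)^{(2k-\ell)/2} \leq (2k)^k$ and summing $\ell = 1,\ldots,k$ should produce the claimed form $\frac{1}{|A_N|}C^k k^k (\max(1,2\alpha))^{2k}$ after absorbing universal constants into $C$. The main obstacle is purely bookkeeping: ensuring that the combinatorial factors and the $(2\alpha)^{2k-\ell-1}$ weights collapse uniformly into $C^k k^k \overline{\alpha}^{2k}$, which requires a small case distinction between $\alpha \leq 1$ and $\alpha > 1$ but introduces no conceptually new ingredient.
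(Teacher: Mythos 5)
Your proposal is correct and is essentially the same argument as the paper's proof. You both expand the second moment as a double sum over pairs of index sets, reduce each overlap class to an integral against a correlation function via the exchangeability of Remark~\ref{intout}, identify the disjoint term minus the squared mean with the $D_{N,k}^{(\beta)}$ integral, and bound the overlapping terms by combining the $k^{k/2}$ bound of Lemma~\ref{result_corr_func}(iii) with a volume estimate on the support of $\chi_\alpha\cdot\chi_\alpha$. The only cosmetic difference is that you organise by $\ell=|I\cap J|$ rather than by $l=|I\cup J|$ (the combinatorial factor $1/(\ell!((k-\ell)!)^2)$ coincides with the paper's $\tfrac{1}{l!}\binom{l}{k}\binom{k}{2k-l}$ under $\ell=2k-l$), and that your direct integration of the first coordinate over $A_N$ and the rest over an interval of length $2\alpha$ avoids the paper's sort/unsort step, which produces an extra factor of $l$ and forces them to invoke the Katz--Sarnak inequality $l\le 3(2k-l)!((l-k)!)^2$; your bound is slightly cleaner here and the sum $\sum_{\ell\ge 1}1/(\ell!((k-\ell)!)^2)\le e$ handles the bookkeeping without it.
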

	\begin{proof}
	In order to estimate the variance on the left hand side of (\ref{Int_F_k_D_N_k}), 
		we first evaluate the expected second moment. Using again the symmetry of the
		rescaled correlation functions together with Remark \ref{intout} and some
		combinatorial arguments leads to 
			\begin{align}
			 &\mathbb E_{N,\beta}\left(\left( \int_0^{\alpha} d \gamma_N(k,H) \right)^2\right)=
			  \frac{1}{|A_N|^2 N!} \sum_{l=k}^{2k} \, \, \, \sum_{\substack{T,S \subset \{1,\ldots,N\}\\ |T|=|S|=k 
			  \\   \#(T\cup S)=l}} \int_{\mathbb R^N} \chi_{\alpha}(t_T)\chi_{\alpha}(t_S) B_{N,N}^{(\beta)}(t) dt
			\nonumber 
			\\
			&=\frac{1}{|A_N|^2} \sum_{l=k}^{2k} \frac{1}{l!} \binom{l}{k}\binom{k}{l-k}
			\int_{A_N^l} \chi_{\alpha}(t_1,\ldots, t_k) \chi_{\alpha}(t_1,\ldots, t_{2k-l},t_{k+1},\ldots, t_l) 
				B_{N,l}^{(\beta)}(t)dt. 
				\label{eq_red_step1.1}
			\end{align}
In the last step we used that there are $\binom{N}{l} \binom{l}{k}\binom{k}{2k-l}$ 
		possibilities for  sets $T,S \subset  \{1,\ldots, N\}$, $|T|=|S|=k$ with $\#(T\cup S)=l$ and once we decided on $T$ and $S$ we may by symmetry assume that in (\ref{eq_red_step1.1}) 
		$$T=\{1, \ldots, k \}, \quad  S=\{1,\ldots, 2k-l,k+1,\ldots, l\},\quad T\cap S=\{1,\ldots, 2k-l \}.$$
For $l<2k$  the two factors $\chi_{\alpha}$ in the integral in
(\ref{eq_red_step1.1}) have at least one common argument, i.e.~all appearing
variables have a maximal deviation from each other of $2 \alpha$. Hence, for $l<2k$ we first sort the $l$ variables in (\ref{eq_red_step1.1}), change variables as usual, i.e.~$s=t_1 \in A_N$, $z_i=t_i-s \in [0,2\alpha], i\neq 1$ 
and unsort the $l-1$ z-variables. Using the bound on the correlation functions provided in Lemma \ref{result_corr_func}, we arrive at 
\begin{align}
			 & \left| \int_{A_N^l} \chi_{\alpha}(t_1,\ldots, t_k) \chi_{\alpha}(t_1,\ldots, t_{2k-l},t_{k+1},\ldots, t_l) 
				B_{N,l}^{(\beta)}(t)dt \right| 
				\nonumber \\
			 &\leq \frac{l!}{(l-1)!} \, |A_N| \, (2\alpha)^{l-1} C^l \, l^{l/2}  
			 =  l \, |A_N| \, (2\alpha)^{l-1} C^l \, l^{l/2}. 
			\label{estimate_einz_Summ}
			\end{align}
		For $k \leq l \leq 2k-1$ we use 
		$
		 l \leq 3(2k-l)! ((l-k)!)^2
		$ (see \cite[(5.11.10)]{katzsarnak})
		together with (\ref{estimate_einz_Summ}) to bound each of the 
		 $k$ terms of the sum in (\ref{eq_red_step1.1}) with $k \leq l \leq 2k-1$ by
		$$ |A_N| \max(1,2 \alpha)^{2k}  C^{k} \, \, (2k)^k $$
		for some  $C>0$. 
		Taking into consideration the last term ($l=2k$) of the sum  in (\ref{eq_red_step1.1})  we obtain
			\begin{eqnarray*}
				\mathbb{E}_{N,\beta}\left( \left( \int_0^{\alpha} d \gamma_N(k, H) \right)^2
				\right) \leq  \frac{1}{|A_N|}\,\, C^{k} \,\,  k^k (\max (1,2
				\alpha))^{2k}  
				\\ 
				+ \frac{1}{|A_N|^2 (k!)^2} \int_{A_N^{2k}} \chi_{\alpha}(t')\chi_{\alpha}(t'') 
				B_{N, 2k}^{(\beta)}(t',t'')dt'dt''. 
			\end{eqnarray*}		
		The representation of the expected value of $\gamma_N$ in (\ref{Darst_E_gamma}) and in particular its square completes the proof. 
	\end{proof}
	With the above lemma the proof of Theorem \ref{lemmavar.1} reduces to the proof of the following estimate (under Assumption \ref{main_asssumption}) for $2k \leq N$:
		\begin{equation}
		 \frac{1}{|A_N|^2 ((k!)^2} \left| \int_{A_N^{2k}} \chi_{\alpha}(t')\chi_{\alpha}(t'')  D_{N,k}^{(\beta)} (t',t'') dt'dt'' \right| 
		 \leq  
		C^{k} \overline{\alpha}^{2k}\left(\mathcal{O}\left( \frac{1}{|A_N|}\right)
			+\mathcal{O}\left(\kappa_N \right) \right).
			\label{reduction_var1}
		\end{equation}
\begin{remark}
In the case $\beta=2$ estimate (\ref{reduction_var1}) can  be shown by using the determinantal structure of the limiting correlation functions and the square integrability of the sine kernel and similar, yet less involved, arguments as we will use in the cases $\beta=1$ and $\beta=4$. For the rest of this section we focus on $\beta=1$ and $\beta=4$. 
\end{remark}
Next, we will estimate the error that arises if on the left hand side of (\ref{reduction_var1}) we replace $ D_{N,k}^{(\beta)}$ by its large $N$ limit.
To this end we introduce the following notation.
\begin{definition}\label{Def_Vnk}
For $\beta=1,4, k \in \mathbb N$  and $t',t'' \in \mathbb R^{k}$ we set 
$$V_k^{(\beta)}(t',t'')\coloneqq  W_{2k}^{(\beta)}(t',t'')-W_{k}^{(\beta)}(t')W_{k}^{(\beta)}(t'')=\lim_{N\to \infty}D_{N,k}^{(\beta)}(t',t''). $$ 
\end{definition} 
\begin{remark}\label{Rep_D}
We observe that $D_{N,k}^{(\beta)}$ is given by  the right hand side of (\ref{expansion_corr}), 
where  we sum for each $m \in \{1, \ldots, 2k \}$ over all partitions of $\{1,\ldots,2k\}$ into non-empty subsets $S_1,\ldots, S_m$. For $D_{N,k}^{(\beta)}$ these sets have the additional property that
there exists an $i_0 \in \{ 1, \ldots, m\}$ such that
\begin{equation}\label{mixing_property}
S_{i_0} \cap \{1, \ldots , k \} \neq \emptyset \text{ \quad and  \quad }  S_{i_0} \cap \{k+1,\ldots , 2k \} \neq \emptyset. 
\end{equation}
 Then $V_{k}^{(\beta)}$ is given by the same expression,  where $\hat{K}_{N,\beta}$ is replaced by $K_{\beta}$ and $\tilde{t}$ by $t$. 
\end{remark}

\begin{lemma}\label{var_zwischenerg}
Under Assumption \ref{main_asssumption} there exists $C>0$ such that for $k \in \mathbb N$, $N \in \mathbb N$, $2k \leq N$ and $\beta \in \{1,4 \}$ we have
		\begin{align*}
			&\left|\int_{A_N^{2k}} \chi_{\alpha}(t')\chi_{\alpha}(t'') D_{N,k}^{(\beta)} (t',t'') dt'dt'' -
			\int_{A_N^{2k}} \chi_{\alpha}(t')\chi_{\alpha}(t'') V_{k}^{(\beta)} (t',t'') dt'dt'' \right| 
			\\
			\leq & \, \, (2k)! \,  C^{k} \, \mathcal{O}(\kappa_N) (2\alpha)^{2k-2} \, |A_N|^2.
 		\end{align*}
\end{lemma}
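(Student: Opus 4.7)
The plan is to obtain the stated bound by controlling the integrand pointwise and then bounding the Lebesgue measure of the integration region. By Remark~\ref{Rep_D}, both $D_{N,k}^{(\beta)}(t',t'')$ and $V_k^{(\beta)}(t',t'')$ admit the expansion of (\ref{expansion_corr})/(\ref{expansion_corr_limit}) over the same family of partitions of $\{1,\ldots,2k\}$ satisfying the mixing property (\ref{mixing_property}), with $\hat{K}_{N,\beta}$ and $\tilde t$ replaced by $K_\beta$ and $t$ in the limiting quantity. The difference $D_{N,k}^{(\beta)} - V_k^{(\beta)}$ is therefore, index by index, a difference of products of kernel entries.

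For any fixed partition $(S_1,\ldots,S_m)$, tuple of permutations $(\sigma_i)$, and index vector $d \in \{1,2\}^{2k}$, estimate (\ref{est_F.1}) gives
\[
\left| \prod_{i=1}^m F_{S_i,\sigma_i}^{\hat{K}_{N,\beta}}(d_{S_i},\tilde{t}_{S_i}) - \prod_{i=1}^m F_{S_i,\sigma_i}^{K_\beta}(d_{S_i},t_{S_i}) \right| \leq C^m\, \mathcal{O}(\kappa_N) \leq C^{2k}\, \mathcal{O}(\kappa_N).
\]
The sum over $\sigma_i \in \mathfrak{S}(S_i)$ contributes exactly $\prod_i |S_i|!$, while (\ref{absch_kombi_1}) with $k$ replaced by $2k$ bounds the combined sum over partitions and permutations by $4^{2k}(2k)!$; the sum over $d$'s contributes an additional $2^{2k}$. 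Absorbing $2^{2k}\cdot 4^{2k}\cdot C^{2k}$ into a new constant raised to the $k$-th power, one arrives at the uniform pointwise estimate
\[
\left|D_{N,k}^{(\beta)}(t',t'') - V_k^{(\beta)}(t',t'')\right| \leq (2k)!\, C^k\, \mathcal{O}(\kappa_N), \qquad t',t'' \in A_N^k.
\]

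It remains to bound the measure of the support of $\chi_\alpha(t')\chi_\alpha(t'')$ in $A_N^{2k}$. The factor $\chi_\alpha(t')$ forces $t' \in A_N^k$ together with $\max_i t'_i - \min_i t'_i \leq \alpha$, so fixing any single coordinate of $t'$ in $A_N$ confines the remaining $k-1$ coordinates to an interval of length at most $2\alpha$ around it; hence the support has measure at most $|A_N|(2\alpha)^{k-1}$, and the analogous bound for $t''$ produces the total volume bound $|A_N|^2(2\alpha)^{2k-2}$. Multiplying this by the pointwise estimate above yields exactly the asserted inequality. There is no real obstacle in this step: it is essentially a bookkeeping argument that cleanly factors the kernel-convergence rate $\kappa_N$ out of the integral, so that the substantially harder analysis of (\ref{reduction_var1}) may be carried out at the level of the limiting quantity $V_k^{(\beta)}$, where the integrability properties (\ref{eq_square_int.1})--(\ref{int_R}) become available. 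The only mild care required is to ensure that the combinatorial totals (partitions, permutations, index vectors) and the $2k$ uniformly bounded kernel factors together contribute $(2k)!\,C^k$ rather than a heavier factorial or exponential in $k$.
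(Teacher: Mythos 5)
Your proof is correct and follows the paper's argument exactly: you expand $D_{N,k}^{(\beta)} - V_k^{(\beta)}$ via Remark~\ref{Rep_D}, apply~(\ref{est_F.1}) termwise and~(\ref{absch_kombi_1}) with $k$ replaced by $2k$ to get the uniform pointwise bound $(2k)!\,C^k\,\mathcal{O}(\kappa_N)$ on $A_N^{2k}$, and then bound the support of $\chi_\alpha(t')\chi_\alpha(t'')$ by $|A_N|^2(2\alpha)^{2k-2}$ via the fix-one-coordinate change of variables. The paper compresses these last two steps into ``completing the proof by the usual change of variables,'' but the content is the same.
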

\begin{proof}
We  use the representation of  $D_{N,k}^{(\beta)}$ 
and $V_{k}^{(\beta)}$ given in Remark \ref{Rep_D}.
By (\ref{est_F.1}) and (\ref{absch_kombi_1}) (where $k$ has to be replaced by $2k$)
we obtain for $t',t'' \in A_N^{k} $   
\begin{equation*}
D_{N,k}^{(\beta)} (t',t'') =V_{k}^{(\beta)} (t',t'') + (2k)! \, C^{k}\,  \mathcal{O}(\kappa_N),
\end{equation*} 
 completing the proof by the usual change of variables.
\end{proof}
It is straightforward that the proof of Theorem \ref{lemmavar.1}  reduces to the proof of 
	\begin{equation*}
		\frac{1}{|A_N|^2 ((k!)^2}
			\left| \int_{A_N^{2k}}  \chi_{\alpha}(t')\chi_{\alpha}(t'') V_{k}^{(\beta)} (t',t'') dt'dt'' \right| 
			=  C^{k} \overline{\alpha}^{2k}\mathcal{O}\left( \frac{1}{|A_N|}\right).
	\end{equation*}
Hence, by the expansion of $ V_{k}^{(\beta)} $ as described in  Remark \ref{Rep_D} and (\ref{absch_kombi_1})  
the proof of Theorem \ref{lemmavar.1} finally reduces  to the proof of the following lemma, which is at the heart of the proof. 
%
%
\begin{lemma}[Main Estimate]\label{fundamental_est}
Let $\alpha>0$. Then there exists a constant $C>0$ such that for all $k \in \mathbb N$, $N \in \mathbb N$, $k \leq N$, $m\in \{1,\ldots,2k\}$, $\beta \in  \{1,4 \}$, a partition 
		$S_1,\ldots, S_m$ of $\{1,\ldots, 2k \}$ 
		 and bijections $\sigma_1\in \mathfrak{S}(S_1),\ldots, \sigma_m \in \mathfrak{S}(S_m)$ we have: 
		\begin{equation}
				 \left|\sum_{d_1,\ldots, d_{2k}=1}^2 		 		
				\int_{A_N^{2k}}  \left(  \prod_{i=1}^m F_{S_i,\sigma_i}^{K_{\beta}} (d_{S_i}, (t',t'')_{S_i})\right) \, \, \chi_{\alpha}(t')\chi_{\alpha}(t'') 
				dt'\, dt'' \right| 
				\leq  \, \,  C^{k} \, \overline{\alpha}^{2k-1} \, 
				\mathcal{O}(|A_N|). \label{fund_est_2}
		\end{equation}
	The constant implicit in the 
	$\mathcal{O}$-term can be chosen uniformly in $\alpha$, $k$, $N$, $m$, $S_1,\ldots, S_m$ and in $\sigma_1,\ldots, \sigma_m$. The terms $  F_{S_i,\sigma_i}^{K_{\beta}} $ are defined in (\ref{Def_F_S}).
\end{lemma}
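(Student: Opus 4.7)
The plan is to reduce the integral to a one-dimensional ``gap'' integration along $s := u - v$ (the difference of anchors in the $t'$- and $t''$-blocks) and to control it using the integrability bounds (\ref{eq_square_int.1})--(\ref{int_R}) together with a nilpotent decomposition of $K_4$ to handle the non-square-integrable entry $\mathsf I_4$. First I would sum over $d_1,\dots,d_{2k}$; since the $d$-variables decouple across the partition,
\[
\sum_d\prod_{i=1}^m F_{S_i,\sigma_i}^{K_\beta}(d_{S_i},(t',t'')_{S_i})=\prod_{i=1}^m\frac{1}{2|S_i|}\,\mathrm{tr}\!\left(\prod_{j=1}^{|S_i|}K_\beta(t_{\sigma_i(j)},t_{\sigma_i(j+1)})\right),
\]
converting each $F$-factor into a trace of a cyclic $2\times 2$ matrix product. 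Setting $u=t'_1$, $z_j=t'_j-u$ for $2\le j\le k$ and, symmetrically, $v=t''_1$, $w_j=t''_j-v$, the factor $\chi_\alpha(t')\chi_\alpha(t'')$ confines $(z,w)\in[-\alpha,\alpha]^{2(k-1)}$ and $u,v\in A_N$. The translational invariance of $K_\beta$ (Lemma \ref{result_corr_func}(ii)) makes each matrix factor either an \emph{intra} factor (depending only on $z$'s or only on $w$'s) or a \emph{cross} factor (depending on $s$ shifted by $O(\alpha)$), according to whether its two indices lie in the same half of $\{1,\dots,2k\}$ or not. The mixing property (\ref{mixing_property}) guarantees $q\ge 2$ cross factors in the trace over $S_{i_0}$, while every other trace is purely intra; using the uniform boundedness of the entries of $K_\beta$, the intra traces and the intra arcs of the $S_{i_0}$-trace contribute at most $C^{2k-q}\cdot|A_N|\cdot(2\alpha)^{2(k-1)}$ after integrating over $v$ and $(z,w)$. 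Cyclically rotating the $S_{i_0}$-trace and absorbing each maximal intra arc into its adjacent cross matrix leaves the one-dimensional gap integral
\[
I_s := \int_{\mathbb R}|\mathrm{tr}(C_1(s)\cdots C_{q'}(s))|\,ds,
\]
where each $C_j$ is a $2\times 2$ matrix of uniformly bounded operator norm, $s$-dependent precisely when the underlying cross factor is present.

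\textbf{The case $\beta=1$.} All four entries of $K_1$ lie in $L^2(\mathbb R)$ by (\ref{eq_square_int.1}), so $\|C_j(\cdot)\|_F\in L^2(\mathbb R)$ with norm $\le C$, and combining $|\mathrm{tr}(AB)|\le\|A\|_F\|B\|_F$ with $\|X_1\cdots X_m\|_F\le\|X_m\|_F\prod_{j<m}\|X_j\|_{\mathrm{op}}$ yields $I_s\le C^{q'-2}\|C_1\|_{L^2}\|C_{q'}\|_{L^2}\le C^q$ whenever at least two $C_j$'s are $s$-dependent.

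\textbf{The main obstacle: $\beta=4$.} Now $\mathsf I_4$ is bounded but not square integrable; I would split $K_4 = \widetilde K_4 + \Sigma$ with
\[
\widetilde K_4(x,y)=\begin{pmatrix}\mathsf S_4 & \mathsf D_4\\ \mathcal R_{0,0}(x,y) & \mathsf S_4\end{pmatrix},\qquad \Sigma(x,y)=\begin{pmatrix}0&0\\ \tfrac14\mathrm{sgn}(x-y)&0\end{pmatrix}.
\]
By (\ref{int_R}) every entry of $\widetilde K_4$ is in $L^2(\mathbb R)$ with $L^2$-norm $\le C\overline{\alpha}^{1/2}$, while $\Sigma$ is uniformly bounded by $1/4$ and satisfies the crucial nilpotency $\Sigma\cdot\Sigma=0$. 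Expanding $\prod_j K_4 = \sum_J\prod_j X_j^{(J)}$ with $X_j^{(J)}\in\{\widetilde K_4,\Sigma\}$, only index sets $J$ free of two cyclically consecutive indices survive. In each surviving term I would fold every $\Sigma$ into its right-neighbour $\widetilde K_4$; the resulting shorter cyclic product has compound factors of operator norm $\le C$ and $L^2$-norm $\le C\overline{\alpha}^{1/2}$ (the compound $\Sigma\widetilde K_4$ has entries $\tfrac14\mathrm{sgn}\cdot\mathsf S_4$ and $\tfrac14\mathrm{sgn}\cdot\mathsf D_4$, still in $L^2$). The Frobenius--Cauchy--Schwarz argument from the $\beta=1$ case then yields $I_s\le C^q\overline{\alpha}$ whenever at least two compound factors remain $s$-dependent. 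The sole exceptional subcase is $q=2$ with both cross matrices adjacent and merged into a single compound $C_{\mathrm{cross}}$; I would handle it via the trace identity
\[
\int_{\mathbb R}\mathrm{tr}(C_{\mathrm{cross}}(s)\,D)\,ds = \mathrm{tr}\!\left(\left(\int_{\mathbb R}C_{\mathrm{cross}}(s)\,ds\right)D\right),
\]
with $D$ the constant-in-$s$ intra product, applying the interval bounds (\ref{eq_int.1})--(\ref{int_R}) entrywise: splitting at the jumps of $\mathrm{sgn}$, each entry of $\int C_{\mathrm{cross}}$ reduces to a finite sum of integrals of $\mathsf S_4$, $\mathsf D_4$, or $\mathcal R$ over intervals, bounded by $C\overline{\alpha}$, so that $I_s\le C\overline{\alpha}$ still holds.

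\textbf{Conclusion.} Combining $|A_N|\cdot(2\alpha)^{2(k-1)}\cdot C^{2k-q}\cdot C^q\overline{\alpha}$ and absorbing the $2^{2k}$ summands from the $d$-summation and the combinatorial prefactors $\prod(2|S_i|)^{-1}$ into a single $C^k$ yields the claimed estimate (\ref{fund_est_2}), with a constant uniform in $\alpha$, $k$, $N$, $m$, the partition $S_1,\dots,S_m$ and the bijections $\sigma_1,\dots,\sigma_m$.
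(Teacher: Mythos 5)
There is a genuine gap in the $\beta=4$ case, and it concerns precisely the difficulty that makes this case hard: the term in which \emph{every} cross factor of the cyclic product becomes singular. Your nilpotency observation $\Sigma\cdot\Sigma=0$ correctly eliminates terms in which two $\Sigma$'s are \emph{cyclically adjacent}, but a cross $\Sigma$ followed by an intra $\widetilde K_4$ followed by a cross $\Sigma$ survives, and this configuration is generic (e.g.\ for the cycle $t_1\to t_2\to t_3\to t_4\to t_1$ with $t_1,t_2$ in one half and $t_3,t_4$ in the other, the two cross factors are at positions $2$ and $4$). In that surviving term, every folded compound is of the form $\Sigma^{\text{cross}}\cdot\widetilde K_4^{\text{intra}}$, whose entries are $\tfrac14\mathrm{sgn}(\cdot)\cdot\mathsf S_4(\text{intra})$ and $\tfrac14\mathrm{sgn}(\cdot)\cdot\mathsf D_4(\text{intra})$: here the $\mathrm{sgn}$ is $s$-dependent but the $\mathsf S_4,\mathsf D_4$ arguments are \emph{not}, so the compound is bounded but has infinite $L^2$-in-$s$ norm. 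Your parenthetical ``still in $L^2$'' is false in this configuration, the Frobenius--Cauchy--Schwarz step has no square-integrable factor to run on, and the $s$-integral picks up a factor $|A_N|$ instead of $O(\overline\alpha)$, destroying the bound (\ref{fund_est_2}). This is also not covered by your ``sole exceptional subcase'' ($q=2$ adjacent), which is a different situation.

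The paper disposes of exactly this ``all cross factors singular'' term with a dedicated cancellation argument (Lemma~\ref{lemma_G} together with the last part of the proof of Lemma~\ref{fundamental_est}): the segment of type (i) factors sitting between two cross $(2,1)$-entries is a function $G$ whose integral against a symmetric weight vanishes after summing over the internal $d$-indices, via the involution $(d_1,\ldots,d_{r-2})\mapsto(h(d_{r-2}),\ldots,h(d_1))$ and the (anti)symmetries of $K_4$. Two features of the paper's singular/integrable split are essential for this to work and are not present in your $\Sigma$: the singular part is taken to be $\pm\tfrac14$ with the sign determined by the \emph{fixed anchor comparison} $\mathrm{sgn}(t_{\mu_1}-t_{\nu_1})$ rather than by the actual arguments, so that the product of all singular factors is the constant $(-1)^\theta(1/4)^{2\theta}$, and consequently the remaining integrand $R^{(d)}_{\text{type(i)}}\chi_\alpha\chi_\alpha$ is symmetric in the intra variables of the segment — a symmetry the cancellation lemma requires. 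Your $\Sigma(x,y)=\tfrac14\mathrm{sgn}(x-y)e_{21}$ carries the sign of the actual arguments; the resulting sign factors depend individually on the intra variables of the segment, breaking the symmetry, so even if you tried to invoke the cancellation it would not apply directly. Filling this gap requires importing essentially the anchor-based decomposition (\ref{Def_R}) together with Lemma~\ref{lemma_G}. The $\beta=1$ part and the trace reformulation are fine.
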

Before we can prove the Main Estimate, we need to introduce some more notation. For given  $m, \sigma_1, \ldots, \sigma_m$ and $S_1,\ldots, S_m$ 
we can write with $(t',t'')=(t_1,\ldots,t_{2k})$
\begin{equation}
 \prod_{i=1}^m F_{S_i,\sigma_i}^{K_{\beta}} (d_{S_i}, (t',t'')_{S_i}) = \prod_{i=1}^{2k}  f_i(t_{\mu_i},t_{\nu_i}),
 \quad f_i \in  \{ \mathsf{S}_{\beta},\mathsf{D}_{\beta},\mathsf{I}_{\beta}\}
 \label{def_typ1_typ2}
 \end{equation}
with $\{\mu_1,\ldots,\mu_{2k}\}=\{\nu_{1}, \ldots,\nu_{2k}\}=\{1,\ldots, 2k\}$.
Here, we distinguish two types of factors $f_i(t_{\mu},t_{\nu})$:
\begin{itemize}
\item type (i): 	
$  \displaystyle \mu, \nu \in \{1, \ldots, k \}\quad  \text{ or } \quad \mu,\nu \in \{k+1, \ldots, 2k\} 
$
\item type (ii) is divided into two subtypes:
\begin{itemize} \item type (iia):	$
\displaystyle \quad 																	\mu \in \{ 1, \ldots ,k \}, \quad \nu \in \{ k+1,\ldots, 2k \}
														$
		\item type (iib): 
				$ \displaystyle \quad 
														\nu \in \{1, \ldots ,k \}, \quad \mu \in  \{k+1, \ldots, 2k\}.
														$
								\end{itemize}
	\end{itemize}
	We observe that according to Remark \ref{Rep_D}, and in particular (\ref{mixing_property}), the total amount of factors of type (ii) in (\ref{def_typ1_typ2}) is even since there are as many of type (iia) as there are of type (iib). Moreover, there are at least two factors of type (ii).
		We can now prove the Main Estimate in the simpler case $\beta=1$.

\begin{proof}[Proof of the Main Estimate for $\beta=1$]
As there are at least two factors of type (ii) in (\ref{def_typ1_typ2}), we bound all other factors uniformly by some constant $C$ and we estimate
\begin{align*}
\left| \prod_{i=1}^m F_{S_i,\sigma_i}^{K_{\beta}} (d_{S_i}, (t',t'')_{S_i})\right| 
&\leq C^{2k-2} |f(t_{\mu_1},t_{\nu_1})| \, |g(t_{\mu_2},t_{\nu_2})|
\\
& \leq C^{2k-2}  \frac{1}{2}
\left( f^2(t_{\mu_1},t_{\nu_1}) + g^2(t_{\mu_2},t_{\nu_2}) \right)
\end{align*}
for some $f,g \in \{\mathsf{S}_1,\mathsf{D}_1,\mathsf{I}_1\}$ and $\mu_1 \in \{1,\ldots, k\}$, $ \nu_1 \in \{k+1,\ldots, 2k\}$, $\mu_2 \in \{k+1,\ldots, 2k\}$, $\nu_2 \in \{1,\ldots, k\}$ (i.e.~$f$ denotes some factor of type (iia) and $g$ denotes some factor of type (iib)). Because of the square integrability  of $\mathsf{S}_1,\mathsf{D}_1,\mathsf{I}_1$ given in (\ref{eq_square_int.1}), we obtain with the usual change of variables $x=t_{\mu_1},$ $x_i=t_i-t_{\mu_1}, i\neq \mu_1$ and $y=t_{\nu_1}, y_i=t_{i+k}-t_{\nu_1}, i\neq \nu_1-k$
\begin{equation*}
\left|\int_{A_N^{2k}} f^2(t_{\mu_1},t_{\nu_1}) \, \, \chi_{\alpha}(t')\chi_{\alpha}(t'') 
				dt'\, dt'' \right| = \mathcal{O}(|A_N|) (2 \overline{\alpha})^{2k-2}. 
				\end{equation*}
				A similar estimate for $g$ proves the claim.
\end{proof}

\begin{remark}\label{Remark_beta1}
We note that we will be able to apply the same reasoning as in the proof of the Main Estimate for $\beta=1$, whenever there are two square-integrable factors of type (ii) in (\ref{def_typ1_typ2}).
\end{remark}
The proof of the Main Estimate in the case $\beta=4$ is more involved and requires some preparation. 
We will now prove a lemma that will be used to show that certain terms on the left hand side of (\ref{fund_est_2}) for $\beta=4$ cancel each other. 
\begin{lemma}\label{lemma_G}
Let $r \geq 2$. 
For $r>2$ let  $G:\mathbb R^{r}\times \{1,2\}^{r-2} \to \mathbb R$ be given by
\begin{align*}
G((t_1,\ldots, t_r),& (d_1,\ldots,d_{r-2}))\\
&\coloneqq  (K_4(t_1,t_2))_{1,d_1} (K_4(t_2,t_3))_{d_1,d_2} \ldots (K_4(t_{r-1},t_r))_{d_{r-2},2}
\end{align*}
and for $r=2$ let $G:\mathbb R^{2}  \to \mathbb R$ be given by
\begin{equation*}
G(t_1,t_2)\coloneqq \mathsf{D}_4(t_1,t_2),
\end{equation*}
where $K_4$ is the matrix kernel given in (\ref{notation_S}) to (\ref{notation_I}).
Let $f:\mathbb R^{r} \to \mathbb R$ be a function such that $f(t_1,\ldots,t_r)=f(t_{\sigma(1)}, \ldots,t_{\sigma(r)})$ for any permutation $\sigma$ on $\{1, \ldots, r\}$ and let $A \subset \mathbb R$ be an interval.
Then we have 
\begin{equation*}
 \sum_{d_1,\ldots, d_{r-2}=1}^2\int_{A^r} G((t_1,\ldots, t_r), (d_1,\ldots,d_{r-2})) f(t_1,\ldots,t_r) dt_1\ldots dt_r =0, \quad r>2
\end{equation*}
and  
\begin{equation*}
\int_{A^2} G(t_1,t_2) f(t_1,t_2) dt_1 dt_2 =0, \quad r=2.
\end{equation*}
\end{lemma}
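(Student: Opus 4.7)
The plan is to recognize the sum over $d_1,\dots,d_{r-2}$ as a single matrix entry, then exhibit an antisymmetry under the reversal $(t_1,\dots,t_r)\mapsto(t_r,\dots,t_1)$; combined with the symmetry of $f$ and a change of variables, this forces the integral to equal its own negative. With $e_1=(1,0)^T$ and $e_2=(0,1)^T$, the sum in question is the $(1,2)$-entry of the matrix product
$$M(t_1,\dots,t_r) \coloneqq K_4(t_1,t_2)\,K_4(t_2,t_3)\cdots K_4(t_{r-1},t_r),$$
and the case $r=2$ fits this framework with $M(t_1,t_2)=K_4(t_1,t_2)$, whose $(1,2)$-entry is $\mathsf{D}_4(t_1,t_2)$.

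The key algebraic ingredient is the identity
$$K_4(y,x) \;=\; -\sigma\,K_4(x,y)^T\sigma,$$
with $\sigma$ as in (\ref{Def_J}). It follows from the entry-wise symmetries read off from (\ref{notation_S})--(\ref{notation_I}): $\mathsf{S}_4(y,x)=\mathsf{S}_4(x,y)$ since $K_2$ is symmetric in its arguments, while $\mathsf{D}_4(y,x)=-\mathsf{D}_4(x,y)$ because $K_2(2x,2y)$ depends only on $x-y$, and $\mathsf{I}_4(y,x)=-\mathsf{I}_4(x,y)$ by the evenness of $K_2(\cdot,0)$ together with a change of variables in the defining integral. A direct $2\times 2$ multiplication then verifies the claimed identity.

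Substituting this identity into each of the $r-1$ factors of $M(t_r,\dots,t_1)=K_4(t_r,t_{r-1})\cdots K_4(t_2,t_1)$ and using $\sigma^2=-I$ to collapse the $r-2$ interior junctions $\sigma\sigma$, the explicit minus signs contribute $(-1)^{r-1}$ and the interior junctions contribute $(-1)^{r-2}$, for an overall sign $(-1)^{2r-3}=-1$; the reversal that arises from transposing a product converts the descending order back to ascending, yielding
$$M(t_r,\dots,t_1) \;=\; -\sigma\,M(t_1,\dots,t_r)^T\sigma.$$
Using $\sigma e_2=e_1$, $e_1^T\sigma=e_2^T$, and the scalar identity $e_2^T M^T e_1=e_1^T M e_2$, taking the $(1,2)$-entry gives the antisymmetry
$$M(t_r,\dots,t_1)_{1,2} \;=\; -\,M(t_1,\dots,t_r)_{1,2}.$$

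To conclude, the change of variables $(t_1,\dots,t_r)\mapsto(t_r,\dots,t_1)$ preserves $A^r$ and leaves $f$ unchanged by hypothesis, so the integral equals its own negative and hence vanishes; the case $r=2$ is even more direct, as $\mathsf{D}_4$ itself is antisymmetric and $f$ symmetric. I expect no real obstacle beyond careful bookkeeping of the factors of $\sigma^2=-I$ and of the sign flips produced by each application of the reversal identity.
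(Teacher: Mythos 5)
Your proof is correct, and while the underlying algebra is the same as in the paper, you package it more conceptually. The paper works entry-by-entry: it introduces an involution $d\mapsto d^*$ on the interior index vectors, shows by direct manipulation (observations (i), (ii), (iii) in its proof) that $\int G(\cdot,d)\,f = -\int G(\cdot,d^*)\,f$, and then sums over the resulting pairs. You instead notice that the sum over $d_1,\ldots,d_{r-2}$ is precisely the $(1,2)$-entry of the matrix product $M(t_1,\ldots,t_r)=K_4(t_1,t_2)\cdots K_4(t_{r-1},t_r)$, and that the paper's three observations are all encoded in the single identity $K_4(y,x)=-\sigma K_4(x,y)^T\sigma$. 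From this you obtain $M(t_r,\ldots,t_1)=-\sigma M(t_1,\ldots,t_r)^T\sigma$ and hence the sign flip of the $(1,2)$-entry under reversal of the arguments, after which the change of variables and symmetry of $f$ finish the proof in one step. What the matrix formulation buys is clarity and compactness: the sign bookkeeping ($(-1)^{r-1}$ from the factor identity, $(-1)^{r-2}$ from the interior $\sigma\sigma=-I$ junctions, net $-1$) replaces the paper's parity count of off-diagonal pairs, and the $\sigma e_2=e_1$, $e_1^T\sigma=e_2^T$ computation cleanly extracts the $(1,2)$-entry. What the paper's version buys is a slightly finer statement, namely that the cancellation already occurs in pairs of $d$-configurations, not just after summing over all of them; this is not needed for the application, though. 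Your argument is sound; the only point to write out carefully is the verification of $K_4(y,x)=-\sigma K_4(x,y)^T\sigma$, which, as you note, reduces to the evenness of $\mathsf S_4$ in $x-y$ and the oddness of $\mathsf D_4$ and $\mathsf I_4$, and indeed a direct $2\times2$ multiplication confirms it.
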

\begin{proof}
Let $r>2$. We introduce an equivalence relation on the $(r-2)$-dimensional vectors $(d_1,\ldots,d_{r-2}) \in \{1,2\}^{r-2}$ with equivalence classes of cardinality one or two. We will see that the corresponding  integrals with $d_1,\ldots,d_{r-2}$ in a one-element equivalence class vanish while the others cancel out one another. 
We let $(\cdot)^*: \{1,2\}^{r-2} \to  \{1,2\}^{r-2} $ be given by 
$$(x_1,\ldots,x_{r-2})^* \coloneqq (h(x_{r-2}), \ldots, h(x_1)), \quad \quad h(n) \coloneqq \begin{cases} 1, &n=2 \\ 2, & n=1\end{cases}.$$
This map is obviously an involution defining an equivalence relation with  equivalence classes $\{x, x^*\}$ for $x \in \{1,2\}^{r-2}$.
It suffices to show
\begin{equation*}
\int_{A^r} G(t, (d_1,\ldots,d_{r-2})) f(t) dt = -\int_{A^r} G(t, (d_1,\ldots,d_{r-2})^*) f(t) dt.
\end{equation*}
For $(1,d_1,\ldots, d_{r-2},2) \in \{1,2\}^r$ we observe the following  (see definition of $K_4$ and its entries in (\ref{notation_S}) to (\ref{notation_I})):
\begin{enumerate}
\item[(i)]
Amongst $(1,d_1), (d_1,d_2), \ldots, (d_{r-2},2)$ there is an odd number of pairs that lie  in the set $\{(1,2), (2,1)\}$.
\item[(ii)]
$$ K_4(x,y)_{1,2}=-K_4(y,x)_{1,2}, \quad K_4(x,y)_{2,1}=-K_4(y,x)_{2,1},$$
$$ K_4(x,y)_{1,1}=K_4(y,x)_{1,1}, \quad K_4(x,y)_{2,2}=K_4(y,x)_{2,2}.$$
\item[(iii)]
$\displaystyle K_4(x,y)_{d_id_{i+1}} = K_4(x,y)_{h(d_{i+1}), h(d_i)}$.
\end{enumerate}
Hence, we obtain the following equations:
\begin{align}
&\int_{A^r} G(t, (d_1,\ldots,d_{r-2})) f(t) dt
\nonumber
\\
=& - \int_{A^r} (K_4(t_2,t_1))_{1,d_1} (K_4(t_3,t_2))_{d_1,d_2} \ldots (K_4(t_{r},t_{r-1}))_{d_{r-2},2} f(t) dt\label{cancel_1}
\\
=&- \int_{A^r} (K_4(t_1,t_2))_{d_{r-2},2}\ldots (K_4(t_{r-2},t_{r-1}))_{d_1,d_2} (K_4(t_{r-1},t_r))_{1,d_1}  f(t) dt \label{cancel_2}
\\
=&- \int_{A^r} (K_4(t_1,t_2))_{1,h(d_{r-2})}\ldots (K_4(t_{r-2},t_{r-1}))_{h(d_2),h(d_1)} (K_4(t_{r-1},t_r))_{h(d_1),2}  f(t) dt \label{cancel_3}
\\
=& -\int_{A^r} G(t, (d_1,\ldots,d_{r-2})^*) f(t) dt.
 \nonumber
\end{align}
Indeed, for (\ref{cancel_1}) we used (i) and (ii), for (\ref{cancel_2}) we reversed the order of the factors and relabelled the arguments, which is allowed by the symmetry of $f$ and finally we  used (iii) to obtain (\ref{cancel_3}). 
This completes the proof in the case $r>2$. For $r=2$ the proof is obvious from $G(t_1,t_2)=-G(t_2,t_1)$.
\end{proof}
\begin{remark}\label{bemerkung_lemma_G}
We consider  the expansion of $F_{S,\sigma}^K (d, t_S) $ in (\ref{Def_F_S}) and observe that the product of factors between two $(2,1)$-entries of $K_4$ in  (\ref{Def_F_S}), i.e.~two $\mathsf{I}_4$-factors, can be written in terms of the function $G$ from Lemma  \ref{lemma_G}. We further  observe that two $(2,1)$-entries of $K_4$ may not appear as adjacent factors in (\ref{Def_F_S}). Together with Lemma \ref{lemma_G}, this is the key observation for the following proof for $\beta=4$. 
\end{remark}
We can now finish the proof of the Main Estimate for $\beta=4$. 
 
\begin{proof}[Proof of the Main Estimate for $\beta=4$]
Let $S_1,\ldots, S_m$ and $\sigma_1\in \mathfrak{S}(S_1),\ldots, \sigma_m \in \mathfrak{S}(S_m)$  be fixed.
In the case $\beta=4$  we need to focus on all the factors of type (ii) in (\ref{def_typ1_typ2}) and hence introduce some notation. Recall that with $t'=(t_1,\ldots,t_k), t''=(t_{k+1},\ldots, t_{2k})$ each factor $f_i$ in the expansion 
\begin{equation}\label{exp_TYPI}
 \prod_{i=1}^m F_{S_i,\sigma_i}^{K_{\beta}} (d_{S_i}, (t',t'')_{S_i}) = \prod_{i=1}^{2k}  f_i(t_{\mu_i},t_{\nu_i}),
 \quad f_i \in  \{ \mathsf{S}_{\beta},\mathsf{D}_{\beta},\mathsf{I}_{\beta}\}
 \end{equation}
can be written  (after some appropriate relabelling of the $d_1,\ldots,d_{2k}$) as
\begin{equation}\label{typI}
 f_i(t_{\mu_i},t_{\nu_i})= K_4(t_{\mu_i},t_{\nu_i})_{d_{\mu_i},d_{\nu_i}}, \quad d_{\mu_i},d_{\nu_i} \in \{1,2\}.
\end{equation}
 We observe that the classification of the term in \eqref{typI} into type (i) or type (ii), does not depend on the values of $d_{\mu_i}$ and $d_{\nu_i}$. 
 Suppose that in (\ref{exp_TYPI}) there are $\theta$ factors of type (iia) and consequently $\theta$ factors of type (iib). 
Assume further (after relabelling if necessary) that $f_1(t_{\mu_1},t_{\nu_1}),\ldots, f_{\theta}(t_{\mu_{\theta}},t_{\nu_{\theta}})$ in (\ref{exp_TYPI}) denote the factors of type (iia)  and that $f_{\theta+1}(t_{\varphi_1},t_{\eta_1}),\ldots,$ $f_{2\theta}(t_{\varphi_{\theta}},t_{\eta_{\theta}})$ denote the factors of type (iib)
 for some  $\mu_i,\eta_i \in \{1,\ldots, k\}$, $\nu_i,\varphi_i \in \{k+1,\ldots, 2k\}$, $i=1,\ldots, \theta$.
 We write (suppressing the $d$-dependency in the notation) for $ i=1,\ldots,\theta$
\begin{align}
\mathcal{P}_i(\mu_i,\nu_i,t',t'')  &:=f_i((t',t'')_{\mu_i}, (t',t'')_{\nu_i}),  =K_4(t_{\mu_i},t_{\nu_i})_{d_{\mu_i},d_{\nu_i}}, \quad &&\text{(type (iia))}
\label{intro_pi1}
\\
\mathcal{P}_{\theta+i}(\varphi_i,\eta_i,t',t'')  &:=f_{\theta+i}((t',t'')_{\varphi_i}, 
(t',t'')_{\eta_i}),  =K_4(t_{\varphi_i},t_{\eta_i})_{d_{\varphi_i},d_{\eta_i}}.\quad &&
\text{(type (iib))}
\label{intro_pi2}
\end{align} 
Let $R_{\text{type (i)}}^{(d)}(t',t'')$ denote the product of all factors of type (i) in (\ref{def_typ1_typ2}). Then we can write 
\begin{equation}
 \prod_{i=1}^m F_{S_i,\sigma_i}^{K_{\beta}} (d_{S_i}, (t',t'')_{S_i})
= \prod_{i=1}^\theta \mathcal{P}_i(\mu_i,\nu_i,t',t'')  
			\prod_{i=1}^\theta  \mathcal{P}_{\theta +i}(\varphi_i,\eta_i,t',t'') R_{\text{type (i)}}^{(d)}(t',t'').
			\label{expansion_fund_est}
\end{equation}
Now, we decompose  
\begin{equation}
\label{Def_P_int}
\mathcal{P}_i=\mathcal{P}_i^{\text{(int)}} +\mathcal{P}_i^{\text{(sing)}} 
\end{equation}
and call $\mathcal{P}_i^{\text{(int)}} $ \textit{integrable} as it satisfies some integrability condition (see (\ref{alpha_int_1})). 
 $\mathcal{P}_i^{\text{(sing)}}$  lacks this property and is thus called \textit{singular} and takes values in $\{-\frac{1}{4},0,\frac{1}{4}  \}$ only.
For given $t_{\mu_1}$ and $t_{\nu_1}$ we define this decomposition as follows:
\begin{itemize}
\item
For $1\leq i \leq \theta$ and $j_1 \in \{\mu_1,\ldots,\mu_{\theta}\}$, $j_2 \in \{\nu_1,\ldots, \nu_{\theta}\}$  we set   
\begin{equation*}
\mathcal{P}_i^ {(\text{sing})}(j_1,j_2,t',t'')\coloneqq 
 		\begin{cases}
 			\, \, \, \,0 & \text{ if }\mathcal{P}_i(j_1,j_2,t',t'')
 			\in \{ \mathsf{S}_4(t_{j_1},t_{j_2}),\mathsf{D}_4(t_{j_1},t_{j_2})\}\\
 		   -1/4& \text{ if } \mathcal{P}_i(j_1,j_2,t',t'')=\mathsf{I}_4(t_{j_1},t_{j_2})
 		   \text{ and }  t_{\mu_1}\leq t_{\nu_1} \\
 			\, \, \, \, 1/4&  \text{ if }\mathcal{P}_i(j_1,j_2,t',t'')=\mathsf{I}_4(t_{j_1},t_{j_2})
 		   \text{ and } t_{\mu_1}> t_{\nu_1}.\\
 		\end{cases}
\end{equation*}
\item
For $\theta+1\leq i\leq 2\theta$,  $j_1 \in \{\varphi_1,\ldots,\varphi_{\theta}\}$, $j_2 \in \{\eta_1,\ldots, \eta_{\theta}\}$ we set 
\begin{equation*}
\mathcal{P}_i^ {(\text{sing})}(j_1,j_2,t',t'')\coloneqq 
 		\begin{cases}
 			\, \, \, \,0 & \text{ if }\mathcal{P}_i(j_1,j_2,t',t'')
 			\in \{ \mathsf{S}_4(t_{j_1},t_{j_2}),\mathsf{D}_4(t_{j_1},t_{j_2})\}\\
 		   \, \, \, \,1/4& \text{ if } \mathcal{P}_i(j_1,j_2,t',t'')=\mathsf{I}_4(t_{j_1},t_{j_2})
 		   \text{ and }  t_{\mu_1}< t_{\nu_1} \\
 			 -1/4&  \text{ if } \mathcal{P}_i(j_1,j_2,t',t'')=\mathsf{I}_4(t_{j_1},t_{j_2})
 		   \text{ and } t_{\mu_1} \geq t_{\nu_1}.\\
 		\end{cases}
\end{equation*}
\item 
Moreover, we set 
\begin{equation*}
\mathcal{P}_i^ {(\text{int})}(j_1,j_2,t',t'')\coloneqq \mathcal{P}_i(j_1,j_2,t',t'')-\mathcal{P}_i^ {(\text{sing})}(j_1,j_2,t',t'').
\end{equation*}
\end{itemize}
Obviously, for $t',t'' \in \mathbb{R}^{k}$ the term $\mathcal{P}_i^ {(\text{int})}(j_1,j_2,t',t'')$ is either equal to $\mathcal{P}_i(j_1,j_2,t',t'')$ or differs from it  by $\pm 1/4$.

Let $u 
\in [-\alpha,\alpha]^{k}, u_{\mu_1}=0$,
$v
\in [-\alpha,\alpha]^{k}, v_{\nu_1}=0, \hat{x}=(x,\ldots,x) \in \mathbb R^{k}$, $\hat{y}=(y,\ldots,y) \in \mathbb R^{k}$.
We observe that in the case $\mathcal{P}_i(r,s,t',t'') =\mathsf{I}_4(t_{r},t_{s})$, we have (see (\ref{Def_R})):
\begin{itemize}
\item
 For $1\leq i \leq \theta$ and $r \in \{\mu_1,\ldots,\mu_{\theta}\}$, $s \in \{\nu_1,\ldots, \nu_{\theta}\}$
\begin{equation*}
\mathcal{P}_i^ {(\text{int})}(r,s,\hat{x}+u,\hat{y}+v)=\mathcal{R}_{u_r,v_{s-k}}(x,y).
\end{equation*}
 \item For
  $\theta+1\leq i\leq 2\theta$ and  
$r \in \{\varphi_1,\ldots,\varphi_{\theta}\}$, $s \in \{\eta_1,\ldots, \eta_{\theta} \}$ 
 \begin{equation*}
 \mathcal{P}_i^ {(\text{int})}(r,s,\hat{x}+u,\hat{y}+v)=\mathcal{R}_{v_{r-k},u_s}(y,x).
 \end{equation*}
 \end{itemize}
 Thus, with (\ref{int_R}) we have 
 for some C>0
 \begin{equation}\label{alpha_int_1}
 	\int_\mathbb R  (\mathcal{P}_i^ {(\text{int})}(r,s,			
  			\hat{x}+u,\hat{y}+v))^2  \, 	
  			dx \leq C\overline{\alpha}, 
  			\quad \left|  \int_\mathcal{A} \mathcal{P}_i^ {(\text{int})}	
			(r,s,\hat{x}+u,\hat{y}+v) \, 	
			dx\right|\leq C \, \overline{\alpha}
 \end{equation}
 for all intervals $\mathcal{A}\subset \mathbb R$  and for all $y \in \mathbb R$.

For $\mathcal{P}_i(j_1,j_2,t',t'') =\mathsf{S}_4(t_{j_1},t_{j_2})$ and $\mathcal{P}_i(j_1,j_2,t',t'') =\mathsf{D}_4(t_{j_1},t_{j_2})$ the estimates in (\ref{alpha_int_1})   follow from (\ref{eq_square_int.1}) and (\ref{eq_int.1}).
We will use the estimates in (\ref{alpha_int_1})   after an appropriate change of variables later and proceed with the consideration of (\ref{expansion_fund_est}).

Substituting (\ref{Def_P_int})
into the right hand side of (\ref{expansion_fund_est}) and expanding  the product,  we obtain $4^\theta$ terms for each configuration of $(d_1,\ldots, d_{2k})\in \{1,2\}^{2k}$. Hence, we have for each $(d_1,\ldots, d_{2k}) \in \{1,2\}^{2k}$
\begin{align} 
&\int_{A_N^{2k}}  \left(  \prod_{i=1}^m F_{S_i,\sigma_i}^{K_{\beta}} (d_{S_i}, (t',t'')_{S_i})\right) \, \, \chi_{\alpha}(t')\chi_{\alpha}(t'') 
				dt'\, dt'' 
				\nonumber
\\
&= \sum_{\widetilde{\mathcal{P}}_1 \in \{ \mathcal{P}_1^ {(\text{int})}, \mathcal{P}_1^ {(\text{sing})}  \},\ldots, \atop \widetilde{\mathcal{P}}_{2 \theta} \in \{ \mathcal{P}_{2 \theta}^ {(\text{int})}, \mathcal{P}_{2\theta}^ {(\text{sing})}  \}}			\int_{A_N^{2k}} \prod_{i=1}^\theta \widetilde{\mathcal{P}}_i(\mu_i,\nu_i,t',t'')  
			\prod_{i=1}^\theta  \widetilde{\mathcal{P}}_{\theta +i}(\varphi_i,\eta_i,t',t'')
			R_{\text{type(i)}}^{(d)}(t', t'') 
			\chi_{\alpha}(t')\chi_{\alpha}(t'')dt'dt''.
			\label{main_int_Q_P_3}
\end{align}
We decompose the sum in (\ref{main_int_Q_P_3}) into two partial sums and a single term as follows: 
\begin{itemize}
\item \textbf{partial sum (I):} The sum of those terms in (\ref{main_int_Q_P_3}), where  at least two of the factors $\widetilde{\mathcal{P}}_i, i=1, \ldots, 2\theta$ are integrable.
\item \textbf{partial sum (II):}  The sum of those terms in (\ref{main_int_Q_P_3}), where  exactly one of the factors   $\widetilde{\mathcal{P}}_i, i=1, \ldots, 2\theta$ is integrable. This sum contains $2 \theta$ terms. 
\item \textbf{remainder term:}  There is only a single term left  that is not subsumed into partial sum (I) or partial sum (II): The term, where non of the factors  $\widetilde{\mathcal{P}}_i, i=1, \ldots, 2\theta$ is integrable or in other words all appearing factors are singular.  
\end{itemize}
Then we have for given $d_1,\ldots,d_{2k}$
\begin{align*}
&\int_{A_N^{2k}} \prod_{i=1}^m F_{S_i,\sigma_i}^{K_{\beta}} (d_{S_i}, (t',t'')_{S_i}) 
			\chi_{\alpha}(t')\chi_{\alpha}(t'')dt'dt'' \\
		=&  \text{ partial sum (I)} + \text{partial sum (II)} \\
		  +&\int_{A_N^{2k}} \prod_{i=1}^\theta \mathcal{P}_i^ {(\text{sing})}(\mu_i,\nu_i,t',t'')  
			\prod_{i=1}^\theta  \mathcal{P}_{\theta + i}^ {(\text{sing})}(\varphi_i,\eta_i,t',t'')
			R_{\text{type(i)}}^{(d)}(t', t'') 
			\chi_{\alpha}(t')\chi_{\alpha}(t'')dt'dt''.
\end{align*}
We will show the following two estimates: 
\begin{equation}\label{partialsum1}
\left| \sum_{d_1,\ldots , d_{2k}=1}^2 \text{ partial sum (X)} \right| \leq  C^{k} \, \overline{\alpha}^{2k-1} \, 
				\mathcal{O}(|A_N|), \quad X=I,II
\end{equation}
and  the equation
 \begin{align}
			&\sum_{d_1,\ldots, d_{2k}=1}^2 \int_{A_N^{2k}} \prod_{i=1}^\theta \mathcal{P}_i^{(\text{sing})}(\mu_i,\nu_i,t',t'')  
			\prod_{i=1}^\theta  \mathcal{P}_{\theta +i}^{(\text{sing})}(\varphi_i,\eta_i,t',t'')\nonumber  \\
			&\hskip6cm\times R_{\text{type(i)}}^{(d)}(t', t'') 
			\chi_{\alpha}(t')\chi_{\alpha}(t'')dt'dt'' 
			=0.
			\label{partialsum3}
	\end{align}
Hence, (\ref{partialsum1}) and (\ref{partialsum3}) imply the Main Estimate for $\beta=4$.

 The inequality in (\ref{partialsum1}) 
 will be obtained by estimating each single term in partial sum (I)  and   in partial sum (II), while the proof of equation (\ref{partialsum3}) relies on the fact that some terms for different values of $(d_1,\ldots,d_{2k})$ cancel out (see Lemma \ref{lemma_G}).  
\\
\\
\underline{Proof of (\ref{partialsum1}) for partial sum (I)}: The proof of (\ref{partialsum1}) can be carried out in the same fashion as in the case $\beta=1$ (see Remark \ref{Remark_beta1}), as there are two square-integrable factors (in the sense of the first inequality in (\ref{alpha_int_1})). 
\\
\\
\underline{Proof of (\ref{partialsum1}) for partial sum (II)}:
We consider a term in the sum in (\ref{main_int_Q_P_3}) and let $\widetilde{\mathcal{P}}_v$ be integrable for some $v\in \{1, \ldots, \theta \}$ and 
let $\widetilde{\mathcal{P}}_i$ be singular for $i \neq v$. 
It suffices to consider the case that all appearing singular factors are stemming from $\mathsf{I}_4$ because otherwise the integrals vanish. 
Then this term equals 
		\begin{equation}
			(-1)^{\theta} (1/4)^{2\theta-1}\int_{A_N^{2k}}  \text{sgn}(t_{\mu_1}-t_{\nu_1})\mathcal{P}_v^{(\text{int})}(\mu_v,\nu_v,t',t'')  
			R_{\text{type(i)}}^{(d)}(t', t'') 
			\chi_{\alpha}(t')\chi_{\alpha}(t'')dt'dt''. 
			\label{genau_ein_int_Faktor_1}
		\end{equation}
We change variables: 
\begin{align*}
 x_{\mu_1}&=t_{\mu_1} \in A_N,&  x_i=t_{i}-t_{\mu_1} \in [-\alpha,\alpha] \text{ for }& i \in \{1, \ldots, k \}\setminus \{ \mu_1\}, 
\\
	 y_{\nu_1-k}&=t_{\nu_1} \in A_N,& y_i=t_{k+ i}-t_{\nu_1} \in [-\alpha,\alpha]  \text{ for }& i \in  \{ 1, \ldots, k  \} \setminus \{ \nu_1-k\}
\end{align*} 
and observe that after changing variables $t'$ and $t''$ are replaced by   ($\nu_0\coloneqq \nu_1-k$)
\begin{align*}
		X& \coloneqq(x_1+x_{\mu_1}, \, x_2+x_{\mu_1},\ldots, \, x_{\mu_1-1}+x_{\mu_1}, \, x_{\mu_1},\, x_{\mu_1+1}+x_{\mu_1},\ldots, x_{k}+x_{\mu_1})  \in A_N^k,
		\\
		Y& \coloneqq (y_1+y_{\nu_0}, \, y_2+y_{\nu_0},\ldots, \, y_{\nu_0-1}+y_{\nu_0}, \, y_{\nu_0},\, y_{\nu_0+1}+y_{\nu_0},\ldots, y_{k}+y_{\nu_0}) \in A_N^k.
	\end{align*} 	
We recall that $R_{\text{type(i)}}^{(d)}(t', t'')$ is the product of factors of type (i) each evaluated at the difference of two components of $t'$ or $t''$ and hence $R_{\text{type(i)}}^{(d)}(X,Y)$ does  not depend on 
 $x_{\mu_1}$ and $y_{\nu_0}$ 
and may be taken outside the $x_{\mu_1}$- and $y_{\nu_0}$-integration. 
Moreover,  the term $\chi_{\alpha}(X)=\chi_{\alpha}(x_1,x_2,\ldots, x_{\mu_1-1},0,x_{\mu_1+1},\ldots,x_k)$ does also not depend on $x_{\mu_1}$.  
We recall that for given $y_{\nu_0}$ and given $x_i, i\neq \mu_1$ the following inequality is immediate from the the second estimate in (\ref{alpha_int_1}): 
	\begin{equation}
		\left|\int_{\substack{x_{\mu_1} \in A_N \\ x_{\mu_1} \geq y_{\nu_0}}}  
				\mathcal{P}_v^{\text{(int)}}(\mu_v,\nu_v,X,Y)  \left( \prod_{i=1}^{k} \chi_{A_N}(X_i) \right)
			dx_{\mu_1} \right| \leq C \overline{\alpha}.
			\label{alpha_int_I}
	\end{equation}

We continue with the consideration of (\ref{genau_ein_int_Faktor_1}) and first integrate with respect to $x_{\mu_1}$  in the region $x_{\mu_1} \geq y_{\nu_0}$ . 
	With (\ref{alpha_int_I}) and $|R_{\text{type(i)}}^{(d)}|_{\infty} \leq C^{2k-2\theta}$ we obtain  
\begin{align*}
&\Big| \int_{[-\alpha,\alpha]^{2k-2}}\Big(\int_{A_N}\Big(\int_{\substack{x_{\mu_1} \in A_N \\ x_{\mu_1} \geq y_{\nu_0}}}  \text{sgn}(x_{\mu_1}-y_{\nu_0})\widetilde{\mathcal{P}}_v(\mu_v,\nu_v,X,Y)  \left(\prod_{i=1}^{k} \chi_{A_N}(X_i) \right)
			 dx_{\mu_1} \Big) 
			\nonumber\\
		 &\hskip2cm \times \left(\prod_{i=1}^{k} \chi_{A_N}(Y_i) \right)dy_{\nu_0}\Big) \chi_{\alpha}(X) \chi_{\alpha}(Y) R_{\text{type(i)}}^{(d)}(X, Y) \prod_{i\neq \mu_1}dx_i
		\prod_{i \neq \nu_0} d y_i \Big| 
		\nonumber
		 \\
	 \leq &C^{k} \overline{\alpha}^{\, 2k-1} |A_N|. 
	\end{align*}	
The same estimate can be obtained for  $x_{\mu_1} < y_{\nu_0}$. 
	This leads to 
	\begin{align*}
	& \left| \int_{A_N^{2k}}  \text{sgn}(t_{\mu_1}-t_{\nu_1})\mathcal{P}_v^{\text{(int)}}(\mu_v,\nu_v,t',t'')  
			R_{\text{type(i)}}^{(d)}(t', t'') 
			\chi_{\alpha}(t')\chi_{\alpha}(t'')dt'dt''  \right| \\ \nonumber  
			\leq &  \quad C^{k} \overline{\alpha}^{\, 2k-1} |A_N|
	\end{align*}
	in the case that  $\widetilde{\mathcal{P}}_v$ is integrable for some $v\in \{1, \ldots, \theta \}$ and the functions
 $\widetilde{\mathcal{P}}_i$ with $i \neq v$ are singular. 
 
 The  case that the only integrable factor is $\widetilde{\mathcal{P}}_v$
for some $v\in \{\theta +1, \ldots, 2\theta \}$ (i.e.~this factor stems from a factor of type (iib) before expanding) can be treated in a similar fashion, which proves (\ref{partialsum1}) for partial sum (II). 
\\ 
\\
\underline{Proof of (\ref{partialsum3})}:
We will show that the integrals themselves on the left hand side of  (\ref{partialsum3}) vanish for certain values of $d_1,\ldots,d_{2k}$ and pairs of the remaining terms cancel out. 
It suffices to consider such configurations of $(d_1,\ldots,d_{2k})$ in (\ref{partialsum3}) that imply 
that
   $\mathcal{P}_i$  stems from  $\mathsf{I}_4$  for all $i=1,\ldots,2\theta$, because otherwise at least one of the $\mathcal{P}_i^{(\text{sing})}$   equals zero and the claim is trivially true. Hence, we have 
  \begin{equation}
\label{Bed_di}  
  (d_{\mu_i},d_{\nu_i})=(d_{\varphi_i},d_{\eta_i})=(2,1)\quad  \text{for } i=1,\ldots,\theta
\end{equation}  
   in (\ref{intro_pi1}) and (\ref{intro_pi2}). 
  
  We observe that amongst the $2 \theta$ singular factors, $\theta$ factors are equal to $1/4$ (e.g.~for  $t_{\mu_1} > t_{\nu_1}$  those factors of type (iia)) and $\theta$ factors are equal to $-1/4$ (e.g.~for  $t_{\mu_1} > t_{\nu_1}$  those factors of type (iib)).
  However,  we obtain
	\begin{align*}
			&\int_{A_N^{2k}} \prod_{i=1}^\theta \widetilde{\mathcal{P}}_i(\mu_i,\nu_i,t',t'')  
			\prod_{i=1}^\theta  \widetilde{\mathcal{P}}_{\theta +i}(\varphi_i,\eta_i,t',t'')
			R_{\text{type(i)}}^{(d)}(t', t'') 
			\chi_{\alpha}(t')\chi_{\alpha}(t'')dt'dt''
			\\
			=& (-1)^\theta \left( \frac{1}{4}\right)^{2\theta}	\int_{A_N^{2k}} 
			R_{\text{type(i)}}^{(d)}(t', t'') 
			\chi_{\alpha}(t')\chi_{\alpha}(t'')dt'dt''. \nonumber 
	\end{align*}
	We need to show 
	\begin{equation}
	\label{vanish_R}
	\sum_{d_1,\ldots,d_{2k}=1 }^2\int_{A_N^{2k}} 
			R_{\text{type(i)}}^{(d)}(t', t'') 
			\chi_{\alpha}(t')\chi_{\alpha}(t'')dt'dt''=0,
	\end{equation}
	where some of the $d_i$ are already given by  (\ref{Bed_di}).
We recall (see Remark \ref{Rep_D}) that there is a set $S_j$ amongst $S_1,\ldots,S_m$ such that 
\begin{equation}
\label{schnitt_sj}
S_{j} \cap \{1, \ldots , k \} \neq \emptyset \text{ \quad and  \quad }  S_{j} \cap \{k+1,\ldots , 2k \} \neq \emptyset.
\end{equation}
We want to study the contribution of $F_{S_j,\sigma_j}^{K_4} (d_{S_j}, (t',t'')_{S_j})$ to $R_{\text{type(i)}}^{(d)}(t', t'')$, i.e.~the factors of type (i) in $F_{S_j,\sigma_j}^{K_4} (d_{S_j}, (t',t'')_{S_j})$ . 
We suppose that
\begin{equation*}
|S_j|=M, \quad S_j=\{i_1,\ldots, i_M\}, \quad i_1 < \ldots < i_M
\end{equation*} 
 and we set 
 \begin{equation*}
 s_{r}:= \sigma_j(i_{r}),\quad d^{(r)}:= d_{i_{r}}, \quad \quad r=1,\ldots,M.
 \end{equation*}
Then we have (see (\ref{Def_F_S}))
	\begin{align}
&F_{S_j,\sigma_j}^{K_4} (d_{S_j}, (t',t'')_{S_j})\nonumber \\
&= \frac{1}{2M} (K_4(t_{s_1},t_{s_2}))_{d^{(1)} d^{(2)}} (K_4(t_{s_2},t_{s_3}))_{d^{(2)} d^{(3)}}
 \ldots (K_4(t_{s_M},t_{s_1}))_{d^{(M)}d^{(1)}}.
\label{exp_F}
\end{align}
Suppose that 
\begin{align*}
a := \min  \{r \in \{1,\ldots,M\} &:   s_r \leq k <s_{r+1}\,  \text{ or } \, s_{r+1} \leq k <s_r\},
\\
 b:= \min  \{r \in \{a+1,\ldots,M\}  &:s_r \leq k <s_{r+1}\,  \text{ or } \, s_{r+1} \leq k <s_r  \},
\end{align*}
i.e., reading from left to right in (\ref{exp_F}), 
$(K_4(t_{s_{a}},t_{s_{a+1}}))_{d^{(a)} d^{(a+1)}}$
denotes the first  factor of type (ii) in (\ref{exp_F}) and 
$(K_4(t_{s_{b}},t_{s_{b+1}}))_{d^{(b)} d^{(b+1)}}$
is the second factor of type (ii). Notice that these exist because of (\ref{schnitt_sj}).
Moreover, we have
\begin{equation*}
(K_4(t_{s_{a}},t_{s_{a+1}}))_{d^{(a)} d^{(a+1)}}, (K_4(t_{s_{b}},t_{s_{b+1}}))_{d^{(b)} d^{(b+1)}}\in \{\pm 1/4\},
\end{equation*}
\begin{equation*}
(d^{(a)} d^{(a+1)})=(d^{(b)} d^{(b+1)})=(2,1).
\end{equation*}
Hence, the  contribution of $F_{S_j,\sigma_j}^{K_4} (d_{S_j}, (t',t'')_{S_j})$ to $R_{\text{type(i)}}^{(d)}(t', t'')$, i.e.~the factors of type (i) in $F_{S_j,\sigma_j}^{K_4} (d_{S_j}, (t',t'')_{S_j})$, 
includes a sequence
\begin{equation}
\label{contri_R_type1}
(K_4(t_{s_{a+1}},t_{s_{a+2}}))_{1 d^{(a+2)}}(K_4(t_{s_{a+2}},t_{s_{a+3}}))_{d^{(a+2)} d^{(a+3)}} 
  \ldots (K_4(t_{s_{b-1}},t_{s_{b}}))_{d^{(b-1)} 2}. 
\end{equation}
Considering (\ref{contri_R_type1}), we observe the following:
\begin{enumerate}
\item[(i)]
There is at least one factor in (\ref{contri_R_type1}) because  products of form  $\mathsf{I}_4(x,y)\mathsf{I}_4(y,z)$ may not appear in the expansion of $F_{S_j,\sigma_j}^{K_4} (d_{S_j}, (t',t'')_{S_j})$.
\item[(ii)]
For any factor $(K_4(t_{\mu},t_{\nu}))_{d_{\mu'} d_{\nu'}}$ of   $R_{\text{type(i)}}^{(d)}(t', t'')$ that is not listed in (\ref{contri_R_type1})
we have 
\begin{equation*}
 \mu,\nu \notin \{ s_{a+1},s_{a+2}, \ldots, s_{b} \}, \quad  \quad 
  \mu',\nu' \notin \{ d_{i_{a+2}}, d_{i_{a+3}}, \ldots, d_{i_{b-1}}\}.
\end{equation*}
\item[(iii)] We have 
\begin{equation*}
\{ s_{a+1},s_{a+2}, \ldots, s_{b} \} \subset \{1,\ldots, k\} \quad \text{or} \quad \{ s_{a+1},s_{a+2}, \ldots, s_{b} \} \subset \{k+1,\ldots, 2k\}.
\end{equation*}
\item[(iv)]
Using the notation of Lemma \ref{lemma_G} the term in (\ref{contri_R_type1}) equals
\begin{equation*}
G((t_{s_{a+1}},\ldots, t_{ s_{b}}),(d_{i_{a+2}},  \ldots, d_{i_{b-1}})
). 
\end{equation*}
\end{enumerate}
These observations allow us, in order to prove (\ref{vanish_R}), to first integrate the term in (\ref{contri_R_type1}) with respect to $t_{s_{a+1}},\ldots, t_{ s_{b}}$. By Lemma \ref{lemma_G} we have 
\begin{equation*}
\sum_{d_{i_{a+2}},\ldots, d_{i_{b-1}}=1}^2 \int_{A_N^{b-a}} G((t_{s_{a+1}},\ldots, t_{ s_{b}}),(d_{i_{a+2}},\ldots, d_{i_{b-1}})) 
 \chi_{\alpha}(t')\chi_{\alpha}(t'')dt_{s_{a+1}}\ldots dt_{ s_{b}}=0.
\end{equation*}
This shows (\ref{vanish_R}) and completes the proof.
\end{proof}

\section{Completing the Proof of the Main Theorem}\label{sec:completing}In this section we show, again following the ideas of \cite{katzsarnak}, how the estimates of Section \ref{Sec:pointwise} and Section \ref{Main_est} can be combined to prove Theorem \ref{MAINTHEO}.
		For $s >0$  we set
	\begin{equation*} 
		\Delta_N(s,H)\coloneqq \int_0^s \, d\sigma_N(H)-\int_0^s \, d\mu_{\beta}.
	\end{equation*} 
	In order to prove the Main Theorem,   
	we have to provide an upper bound on the expectation 
	$
	 \mathbb{E}_{N,\beta}\left( \sup_{s \in\mathbb{R}}\left|\Delta(s,H)\right|\right)
$ and so far we derived  an upper bound on 
	$  
	 \left|\Delta(s,H)\right|  
$
	for  $s>0$ (Corollary \ref{koro_ew}). In order to deal with the supremum, we follow the suggestions in \cite{katzsarnak} and introduce  nodes as follows.

	For a given matrix size $N\times N$ let $M=M(N) \in \mathbb{N}$ with $\lim_{N \to \infty} M(N)=\infty$ denote the number of considered nodes 
  $0=s_{0}<s_{1}<\ldots < s_{M(N)-1}<s_{M(N)}=\infty $
  such that
	\begin{equation}\label{def_si_nodes}
	    \int_{0}^{s_i}\, d\mu_{\beta}=\frac{i}{M(N)}, \quad i=0,\ldots,M(N).
	\end{equation}
Such nodes $s_i$ exist because of the continuity of $\int_0^s d \mu_{\beta}$ as a function of $s$   and the fact that $\mu_{\beta}$ is a probability measure (see Lemma \ref{lemma_limiting_spa}).
	Observe that the nodes $s_i, i=1,\ldots,M(N)-1$ depend on both $\beta$ and $N$ and we may eventually   
	 stress the $\beta$-dependency of $s_i$ by writing $s_{i,\beta}$ instead. 
	For later use we further introduce the notation
	\begin{equation*}
	\delta_N\coloneqq\max_{\beta=1,2,4}\{1,s_{M(N)-1,\beta}\}. 
	\end{equation*} 
Following further  the ideas of \cite{katzsarnak}, we obtain a relation between $\delta_N$ and $M(N)$ using the tail estimate for $\mu_{\beta}$  as follows.  For $A_{\beta}\geq 1$ and $B_{\beta} \geq 0$ 
	as in  Lemma \ref{lemma_limiting_spa} (ii) and  $C_{\beta}\coloneqq\sqrt{\frac{1+\ln(A_{\beta})}{B_{\beta}}}$ we have for $N$ sufficiently large and hence   $\ln(M(N))>1$
	\begin{equation*}
				   \int_{C_{\beta} \sqrt{\ln(M(N))}}^{\infty} \,  d\mu_{\beta} \leq A_{\beta} e^{-B_{\beta}C_{\beta}^2 \ln(M(N))}
				   = \frac{1}{M(N)}\left( \frac{1}{A_{\beta}}\right)^{\ln(M(N))-1}\leq \frac{1}{M(N)}.			
				\end{equation*}
				By the definition of the nodes $s_i$, this implies $				
				s_{M(N)-1,\beta} \leq  C_{\beta} \sqrt{\ln(M(N))},
$ and hence for some $C>0$
\begin{equation}\label{A_delta}
	   \delta_N\leq C \sqrt{\ln(M(N))}.
	\end{equation}
The following lemma  shows why it is useful to evaluate $|\Delta(s,H)|$ at the nodes $s_1,\ldots, s_{M(N)-1}$ in order to provide an 
upper bound on $|\Delta(s,H)|$ that does not depend on $s$. 
%

%
%
%
\begin{lemma}[Section 3.2 in \cite{katzsarnak}]\label{est_delta_s_H}
   Let $M(N) \in \mathbb{N}$ and $s_{0,\beta},\ldots,s_{M(N),\beta}$ with 
    $\beta \in \{1,2,4 \}$ be as in 
     (\ref{def_si_nodes}). 
    For 
    \begin{equation*}
   \Delta_{M}(H)\coloneqq\max_{i=1,\ldots, M(N)-1}|\Delta(s_{i,\beta},H)|
\end{equation*}
    and $s \geq 0$ we have
      \begin{equation*}
         |\Delta(s,H)|\leq \frac{1}{M(N)} +  \Delta_M(H)+\left| \int_{\mathbb{R}} \, d\sigma_N(H) -1 \right|.
      \end{equation*}
\end{lemma}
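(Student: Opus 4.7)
The plan is to partition $[0,\infty]$ into the $M(N)$ sub-intervals determined by the nodes $s_{i,\beta}$, localize $s$ into one of them, and exploit the fact that both $s \mapsto \int_0^s d\sigma_N(H)$ and $s \mapsto \int_0^s d\mu_\beta$ are monotonically non-decreasing (being cumulative functions of positive measures on $[0,\infty)$) to sandwich $\Delta(s,H)$ between the values at the two adjacent nodes, up to a discretisation error of $1/M(N)$.

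Concretely, for a given $s \geq 0$ I would pick $i \in \{1,\ldots,M(N)\}$ with $s_{i-1,\beta} \leq s \leq s_{i,\beta}$, using the convention $s_{0,\beta} = 0$ and $s_{M(N),\beta} = \infty$. Monotonicity together with the defining property $\int_0^{s_{j,\beta}} d\mu_\beta = j/M(N)$ from (\ref{def_si_nodes}) yields, for $1 \leq i \leq M(N)-1$,
\begin{equation*}
\Delta(s_{i-1,\beta},H) - \frac{1}{M(N)} \;\leq\; \Delta(s,H) \;\leq\; \Delta(s_{i,\beta},H) + \frac{1}{M(N)},
\end{equation*}
where the boundary case $i=1$ uses $\Delta(0,H) = 0$. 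Both $\Delta(s_{i-1,\beta},H)$ and $\Delta(s_{i,\beta},H)$ are bounded in absolute value by $\Delta_M(H)$, handling the interior intervals.

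For the last interval $i = M(N)$ (i.e.~$s \geq s_{M(N)-1,\beta}$) I would use that $\mu_\beta$ is a probability measure on $[0,\infty)$ by Lemma \ref{lemma_limiting_spa}, so $\int_0^\infty d\mu_\beta = 1$, and that $\sigma_N(H)$ is supported on $[0,\infty)$, hence $\int_0^\infty d\sigma_N(H) = \int_\mathbb{R} d\sigma_N(H)$. The same sandwich argument then gives
\begin{equation*}
\Delta(s_{M(N)-1,\beta},H) - \frac{1}{M(N)} \;\leq\; \Delta(s,H) \;\leq\; \left(\int_\mathbb{R} d\sigma_N(H) - 1\right) + \frac{1}{M(N)},
\end{equation*}
which is precisely where the extra term $|\int_\mathbb{R} d\sigma_N(H) - 1|$ enters the final bound. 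Taking absolute values and maximising the node contribution over $i$ then combines all cases into the asserted inequality.

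There is no serious obstacle beyond careful bookkeeping; the only point worth emphasising is that $\sigma_N$ is not a probability measure — by (\ref{total_mass}) its expected total mass is $1 - 1/|A_N| + \mathcal{O}(\kappa_N)$ — which is exactly the reason the third summand on the right hand side cannot be absorbed into $\Delta_M(H)$ and must appear as a separate term.
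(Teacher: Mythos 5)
Your argument is correct and is exactly the Glivenko--Cantelli-type discretisation argument that the paper refers to by citing Katz--Sarnak: sandwich $\Delta(s,H)$ between its values at the two adjacent nodes using monotonicity of both distribution functions, with the equidistribution $\int_0^{s_{i,\beta}}d\mu_\beta=i/M(N)$ contributing the $1/M(N)$ discretisation error. The last interval $[s_{M(N)-1,\beta},\infty)$ is the only delicate case, and you handle it correctly by observing $\Delta(\infty,H)=\int_{\mathbb{R}}d\sigma_N(H)-1$, which is precisely why the third summand must appear separately rather than being absorbed into $\Delta_M(H)$.
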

As the right hand side of the estimate in Lemma \ref{est_delta_s_H}
does not depend on $s$, we can further estimate $\mathbb{E}_{N,\beta}\left(\sup_{s\in\mathbb{R}}|\Delta(s,H)|\right)$.
\begin{lemma}\label{est_sup}
For  $M=M(N)$ with $M(N) \to \infty$ for $N \to \infty$ and  $s_{0,\beta},\ldots,s_{M(N),\beta}$  as in 
     (\ref{def_si_nodes}) we have 
   \begin{equation*}
      \mathbb{E}_{N,\beta}\left(\sup_{s\in\mathbb{R}}|\Delta(s,H)|\right)\leq        
      \mathbb{E}_{N,\beta}\left(\Delta_M(H)\right)+\frac{1}{M(N)} +  \mathcal{O}(\sqrt{\kappa_N}) + 
      \mathcal{O}\left(\frac{1}{\sqrt{|A_N|}}\right), \quad  \beta \in \{1,2,4 \}
   \end{equation*}
   \end{lemma}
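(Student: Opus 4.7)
The plan is to feed the pointwise bound from Lemma~\ref{est_delta_s_H} into the expectation and then reduce the remaining correction to a variance estimate on the total mass of $\sigma_N$. Since the right-hand side of Lemma~\ref{est_delta_s_H} does not depend on $s$, taking first the supremum over $s$ and then $\mathbb{E}_{N,\beta}$ yields
\[
\mathbb{E}_{N,\beta}\Bigl(\sup_{s\ge 0}|\Delta(s,H)|\Bigr) \le \frac{1}{M(N)} + \mathbb{E}_{N,\beta}(\Delta_M(H)) + \mathbb{E}_{N,\beta}\Bigl|\int_{\mathbb{R}} d\sigma_N(H) - 1\Bigr|,
\]
so the task is to show that the last term is $\mathcal{O}(\sqrt{\kappa_N}) + \mathcal{O}(1/\sqrt{|A_N|})$.

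I would split this via $\mathbb{E}|X-1| \le |\mathbb{E}X-1| + \sqrt{\mathbb{V}_{N,\beta}(X)}$ (the latter by Cauchy--Schwarz applied to $X-\mathbb{E}X$) with $X = \int_{\mathbb{R}} d\sigma_N(H)$. The mean piece is controlled immediately by \eqref{total_mass}, giving $|\mathbb{E}X - 1| = 1/|A_N| + \mathcal{O}(\kappa_N)$, already of the required order. Everything thus reduces to proving $\mathbb{V}_{N,\beta}(X) = \mathcal{O}(\kappa_N) + \mathcal{O}(1/|A_N|)$.

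For this I would switch to the eigenvalue-counting statistic $\eta_N(H) := \#\{j\colon \widetilde{\lambda}_j \in I_N\}$; because the eigenvalues of $H$ lying in $I_N$ are consecutive in the ordering, $|A_N|\int_{\mathbb{R}} d\sigma_N(H) = \eta_N - 1 + \chi_{\{\eta_N=0\}}$. The $L^2$-triangle inequality, combined with Chebyshev on the indicator (for which $\mathbb{V}(\chi_{\{\eta_N=0\}}) \le \mathbb{V}(\eta_N)/(\mathbb{E}\eta_N)^2$), leaves the task of bounding $\mathbb{V}_{N,\beta}(\eta_N)/|A_N|^2$. Writing $\eta_N = \sum_j \chi_{I_N}(\widetilde{\lambda}_j)$ and invoking the permutation symmetry of the correlation functions (Remark~\ref{intout}) yields the standard identity
\[
\mathbb{V}_{N,\beta}(\eta_N) = \int_{A_N} B_{N,1}^{(\beta)}(t)\, dt + \int_{A_N^{2}} D_{N,1}^{(\beta)}(t_1,t_2)\, dt_1 dt_2,
\]
with $D_{N,1}^{(\beta)}$ as in Definition~\ref{defD}. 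By Lemma~\ref{result_corr_func}(i) the first integral equals $|A_N|(1+\mathcal{O}(\kappa_N))$ and the integrand of the second differs from $V_1^{(\beta)}$ by $\mathcal{O}(\kappa_N)$, so everything boils down to showing $\int_{A_N^2} V_1^{(\beta)}(t_1,t_2)\, dt_1 dt_2 = \mathcal{O}(|A_N|)$.

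The main obstacle is this final step for $\beta=4$, which is the usual non-square-integrability of $\mathsf{I}_4$ that pervades Section~\ref{Main_est}. However, at this level ($k=1$), the expansion \eqref{expansion_corr_limit} gives $V_1^{(\beta)}(t_1,t_2) = -\text{tr}(K_\beta(t_1,t_2)K_\beta(t_2,t_1))$, i.e.\ a sum of pairs of mixing factors. For $\beta\in\{1,2\}$ every entry of $K_\beta$ is square-integrable in one variable by \eqref{eq_square_int.1}, and a single Cauchy--Schwarz delivers $\mathcal{O}(|A_N|)$; for $\beta=4$ the only troublesome contribution is $\mathsf{D}_4(t_1,t_2)\,\mathsf{I}_4(t_2,t_1)$ (and its symmetric twin), which is dispatched by one integration by parts, using that $K_2(2\,\cdot\,,2\,\cdot\,)$ is an antiderivative of $\mathsf{D}_4$ bounded by \eqref{eq_int.1}. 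Crucially, the full decomposition and cancellation machinery of Lemma~\ref{fundamental_est} is not needed at $k=1$; it is reserved for the higher-$k$ variance estimates of Theorem~\ref{lemmavar.1}. Assembling these pieces gives $\mathbb{V}_{N,\beta}(\eta_N) \le C(|A_N| + |A_N|^2\kappa_N)$ and hence the claim.
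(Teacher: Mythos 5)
Your proposal is correct and follows essentially the same route as the paper: after applying Lemma~\ref{est_delta_s_H}, both reduce the problem to bounding $\mathbb{E}_{N,\beta}|\int_{\mathbb R}d\sigma_N-1|$ via the first and second moments of the eigenvalue count $S(I_N,H)$, which are computed from $B_{N,1}^{(\beta)}$ and $B_{N,2}^{(\beta)}$ and controlled by the estimate $\int_{A_N^2}W_2^{(\beta)}=|A_N|^2+\mathcal O(|A_N|)$; your split $\mathbb{E}|X-1|\le|\mathbb{E}X-1|+\sqrt{\mathbb{V}(X)}$ is equivalent (up to constants) to the paper's Jensen step, and your integration by parts on $\mathsf{D}_4\cdot\mathsf{I}_4$ simply supplies the detail that the paper asserts without proof.
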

   \begin{proof}
   By Lemma \ref{est_delta_s_H} it remains to show that 
   $$\mathbb E_{N,\beta} \left(\left| \int_{\mathbb{R}} \, d\sigma_N(H) -1 \right| \right) =\mathcal{O}(\sqrt{\kappa_N}) + 
      \mathcal{O}\left(\frac{1}{\sqrt{|A_N|}}\right) .$$ 
      By Jensen's inequality we have
\begin{align}\label{jensen}
\left(\mathbb E_{N,\beta} \left| \int_{\mathbb R} d\sigma_N(H) -1\right| \right)^2 &\leq \mathbb E_{N,\beta}  \left( \left(\int_{\mathbb R} d\sigma_N(H) \right)^2  \right) - 2\mathbb E_{N,\beta} \left( \int_{\mathbb R} d\sigma_N(H)   \right)+1.
\end{align}
To calculate the moments of $ \int_{\mathbb R} d\sigma_N(H)$, we set
$$S(I_N,H) := \# \{i: \tilde{\lambda}_i(H) \in I_N\} $$
and hence by definition $\int_{\mathbb R} d \sigma_N(H) =\frac{1}{|A_N|} (S(I_N,H)-1)$. 
We calculate (using the symmetry of $B_{N,N}^{(\beta)}$)
\begin{align*}
\mathbb E_{N,\beta}(S(I_N,H)) &=\frac{1}{N!} \int_{t_1, \ldots, t_N} \left( \sum_{i=1}^N \chi_{A_N} (t_i) \right) B_{N,N}^{(\beta)} (t_1,\ldots,t_N) dt_1 \ldots dt_N 
\\
&= \int_{A_N}  B_{N,1}^{(\beta)} (t) \, dt = |A_N| (1+\mathcal{O}\left(\kappa_N\right))
\end{align*}
and 
\begin{align}\label{est_sigma_1}
\mathbb E_{N,\beta}\left( \int_{\mathbb R} d \sigma_N(H)\right)=1+\mathcal{O}\left( \kappa_N\right)+ \mathcal{O}\left( \frac{1}{|A_N|}\right).
\end{align}  
In a similar fashion we obtain 
 \begin{align*}
&\mathbb E_{N,\beta}(S(I_N,H)^2) 
= \frac{1}{N!}\int_{t_1,\ldots, \leq t_N} \left(\sum_{i,j=1}^N \chi_{A_N} (t_i)\chi_{A_N} (t_j)\right)  B_{N,N}^{(\beta)} (t_1,\ldots,t_N) dt_1 \ldots dt_N \\
&=|A_N| (1+ \mathcal{O}\left(\kappa_N\right)) +  \frac{N(N-1)}{N!} \int_{t_1,  \ldots,  t_N}  \chi_{A_N} (t_1)\chi_{A_N} (t_2)
 B_{N,N}^{(\beta)} (t_1,\ldots,t_N) dt_1 \ldots dt_N\\
&= |A_N| (1+ \mathcal{O}\left( \kappa_N\right)) +  \int_{A_N^2}  W_2^{(\beta)} (t_1,t_2) \, dt_1 dt_2 + |A_N|^2\mathcal{O}\left( \kappa_N\right).
\end{align*}
In order to show
\begin{equation}\label{est_int_W}
 \int_{A_N^2}  W_2^{(\beta)} (t_1,t_2) \, dt_1 dt_2 = |A_N|^2 + \mathcal{O}(|A_N|),
\end{equation}
we recall that for  $\beta=2$ we have $W_2^{(2)}(t_1,t_2)=1-K_2(t_1,t_2)^2$ and for $\beta=1,4$ we can expand $W_2^{(\beta)}$ via (\ref{expansion_corr_limit}). The square-integrability of the sine-kernel together with 
$$\int_{A_N^2} \mathsf{I}_{\beta}(t_1,t_2) \mathsf{D}_{\beta}(t_2,t_1) dt_1\, dt_2 = \mathcal{O}(|A_N|), \quad \beta=1,4 $$
then implies (\ref{est_int_W}). 
Hence, we have
\begin{equation*}
\frac{1}{|A_N|^2}\mathbb E_{N,\beta}(S(I_N,H)^2) = 1+ \mathcal{O}(|A_N|^{-1}) + \mathcal{O}(\kappa_N)  
\end{equation*}
and 
\begin{align}\label{est_sigma_2}
\mathbb E_{N,\beta}\left(\left( \int_{\mathbb R} d \sigma_N(H)\right)^2 \right)
&= \frac{1}{|A_N|^2}\left(\mathbb E_{N,\beta}(S(I_N,H)^2)  - 2\mathbb E_{N,\beta} S(I_N,H) + 1 \right) 
\nonumber
\\
&=1+  \mathcal{O}\left( |A_N|^{-1}\right) + \mathcal{O}(\kappa_N).
\end{align}
Inserting (\ref{est_sigma_1}) and  (\ref{est_sigma_2}) into (\ref{jensen}) completes the proof. 
     \end{proof}
   As $\kappa_N \to 0, |A_N| \to \infty$ for $N \to \infty$ with Lemma \ref{est_sup} the Main Theorem is a consequence of the following lemma.
   \begin{lemma}
   There exists a sequence $(M(N))_N$ with $M(N)\to \infty$ as $N \to \infty$ such that 
   $$\lim_{N\to\infty}\mathbb{E}_{N,\beta}\left(\Delta_{M(N)}(H)\right) \to 0,\quad \beta=1,2,4.$$
   \end{lemma}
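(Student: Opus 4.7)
The plan is to bound $\mathbb{E}_{N,\beta}(\Delta_{M(N)}(H))$ by controlling $|\Delta_N(s_{i,\beta}, H)|$ simultaneously for all nodes via Corollary \ref{koro_ew}, replacing the random quantities $\int_0^{s_{i,\beta}} d\gamma_N(k,H)$ by their expectations through the Cauchy-Schwarz step suggested in Remark \ref{hinweis_var}, estimating the resulting variances by the corollary following Theorem \ref{lemmavar.1}, and finally balancing the auxiliary parameters $L = L(N)$ and $M = M(N)$.

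First I would write
\begin{equation*}
\mathbb{E}_{N,\beta}(\Delta_{M(N)}(H)) \leq \sum_{i=1}^{M(N)-1} \mathbb{E}_{N,\beta}|\Delta_N(s_{i,\beta}, H)|,
\end{equation*}
and then, for a parameter $L = L(N) \geq 2$ to be chosen, apply Corollary \ref{koro_ew} with $\alpha = s_{i,\beta}$ and take expectations. The resulting expectations $\mathbb{E}_{N,\beta}|\int_0^{s_{i,\beta}} d\gamma_N(k,H) - \mathbb{E}_{N,\beta}\!\int_0^{s_{i,\beta}} d\gamma_N(k,H)|$ are bounded by $\sqrt{\mathbb{V}_{N,\beta}(\int_0^{s_{i,\beta}} d\gamma_N(k,H))}$ via Cauchy-Schwarz, and the sum over $k = 2,\ldots,L$ is then handled by the corollary of Theorem \ref{lemmavar.1}. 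Combining with the explicit remainder \eqref{coro_eq2} of Corollary \ref{koro_ew} yields a bound of the form
\begin{equation*}
\mathbb{E}_{N,\beta}|\Delta_N(s_{i,\beta}, H)| \leq E(N,L)\,(C\overline{s_{i,\beta}})^{L+1},
\end{equation*}
with
\begin{equation*}
E(N,L) := |A_N|^{-1} + \mathcal{O}(\kappa_N) + \mathcal{O}(|A_N|^{-1/2})\,L^{L/2} + \mathcal{O}(\sqrt{\kappa_N}) + (L-1)^{-(L-1)/2}.
\end{equation*}
Inserting $\overline{s_{i,\beta}} \leq \max(1, \delta_N) \leq C\max(1, \sqrt{\ln M(N)})$ from \eqref{A_delta} and summing the $M(N)-1$ terms produces the master estimate
\begin{equation*}
\mathbb{E}_{N,\beta}(\Delta_{M(N)}(H)) \leq M(N)\,E(N, L(N))\,\bigl(C\max(1,\sqrt{\ln M(N)})\bigr)^{L(N)+1}.
\end{equation*}

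It then remains to exhibit sequences $L(N), M(N) \to \infty$ driving the right-hand side to zero. The most sensitive contribution is $M(N)(L(N)-1)^{-(L(N)-1)/2}(\ln M(N))^{(L(N)+1)/2}$; choosing $L(N) = \lceil c\ln M(N)\rceil$ makes it comparable to $M(N)^{1-(c\ln c)/2}$ up to lower-order factors, which vanishes as soon as $c\ln c > 2$ (for instance $c = 3$). With such a choice the remaining two smallness requirements,
\begin{equation*}
M(N)\,L(N)^{L(N)/2}(\ln M(N))^{(L(N)+1)/2} = o(\sqrt{|A_N|}), \qquad M(N)(\ln M(N))^{(L(N)+1)/2} = o(\kappa_N^{-1/2}),
\end{equation*}
can be fulfilled simply by letting $M(N)$ grow sufficiently slowly compared to $|A_N|$ and $\kappa_N^{-1}$; an explicit choice like $M(N) = \lceil(\ln \min(|A_N|, \kappa_N^{-1}))^{\varepsilon}\rceil$ for small $\varepsilon > 0$, or a diagonal extraction, suffices.

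The main obstacle is precisely this parameter balancing, because two super-exponential terms in $L$ compete inside $M(N)E(N,L(N))(C\sqrt{\ln M(N)})^{L(N)+1}$: the factor $L^{L/2}$ coming from the $\beta=4$ variance bound of Theorem \ref{lemmavar.1} (ultimately traceable to the combinatorial explosion $(2k)^k$ when expanding the Pfaffian kernel), and the factor $(\sqrt{\ln M(N)})^{L+1}$ coming from the Gaussian tail of $\mu_\beta$ via \eqref{A_delta}. Controlling both simultaneously is possible only because Lemma \ref{est_sup} imposes no lower bound on the growth of $M(N)$, leaving us free to choose it arbitrarily slowly.
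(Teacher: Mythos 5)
Your plan matches the paper's proof in its essential ingredients: sum over the nodes, invoke Corollary \ref{koro_ew}, apply Cauchy-Schwarz together with the variance bound of Theorem \ref{lemmavar.1} and its corollary, control $\delta_N$ by the Gaussian tail via \eqref{A_delta}, and then balance $L(N)$ against $M(N)$. One caution: in the final calibration you drop the factor $C^{L+1}$ from the master estimate, but with $L=c\ln M$ that factor equals $C\cdot M^{c\ln C}$ and is \emph{not} lower-order, so the correct requirement is $c(\tfrac12\ln c-\ln C)>1$ (i.e.\ $c$ large relative to $C$) rather than $c\ln c>2$ with $c=3$; the paper sidesteps choosing $c$ explicitly by fixing $L$ through \eqref{def_L} and then capping $M$ through \eqref{Def_M}, and also adds the remainder \eqref{coro_eq2} only once (it is deterministic and monotone in $\alpha$) instead of $M$ times as your initial sum-over-nodes bound does — your version is slightly more wasteful but still works once the constant is tracked.
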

   \begin{proof}
   We consider the estimate  from Corollary \ref{koro_ew} for $|\Delta(s_j,H)|$. We obtain an estimate for  $\Delta_M$ by adding up the terms in (\ref{coro_eq1}), now depending on   $s_j$,  for $j=1,\ldots, M-1$ and finally adding  (\ref{coro_eq2}).
    Taking expectations and using the Cauchy-Schwarz inequality and  the estimate on the variance (Theorem \ref{lemmavar.1}) leads to the following inequalities 
   \begin{align}
\mathbb{E}_{N,\beta}\left(\Delta_{M(N)}(H)\right) &\leq    \sum_{j=1}^{M-1} \sum_{k=2}^L \sqrt{\mathbb{V}_{N,\beta}\left( \int_0^{s_j} d\gamma_N(k,H) \right)} 
\nonumber \\
&+ \left( \frac{1}{|A_N|}+ \mathcal{O}(\kappa_N)+  \left( \frac{1}{\sqrt{L-1}} \right)^{L-1} \right) (C \delta_N)^{L+1} 
\nonumber \\
& \leq M(N) 
\left( \frac{1}{\sqrt{|A_N|}} L^{L/2}	+  
\sqrt{\kappa_N} \right) 
 (C_0\delta_N)^{L+1} \label{final.1}
\\
&+ \left( \frac{1}{|A_N|}+  \kappa_N +  \left( \frac{1}{\sqrt{L-1}} \right)^{L-1} \right) (C_0 \delta_N)^{L+1} \label{final.2}
   \end{align} 
   for some $C_0>1$ and  arbitrary $L$.
 Next, we will  provide sufficient choices for the parameters $L(N)$ and $M(N)$, which have so far been arbitrary.
   
    For given $N$  
   we consider the equations
   \begin{equation} \label{def_L}
   x^{x/2}=|A_N|^{1/10}, \quad \quad C_0^{x+1} (x+1)^{(x+1)/4}= \min(|A_N|^{1/10}, \kappa_N^{-1/5}).
   \end{equation}
   For sufficiently large $N$ the solutions are positive and we let $L_N^{(1)}$ denote a solution of the first equation and $L_N^{(2)}$ a solution of the second equation in (\ref{def_L}). Moreover,  let $L(N)$ denote the largest integer such that 
   $$ L(N) \leq \min(L_N^{(1)},L_N^{(2)}). $$
   Once $L(N)$ is fixed, we let $M(N)$ denote the largest integer such that 
   \begin{equation} \label{Def_M}
   M(N) \leq \min \{ |A_N|^{1/5}, \kappa_N^{-1/5}, \exp (\sqrt{L+1}/C_1^2) \},
   \end{equation}
   where $C_1$ denotes a constant such that (\ref{A_delta}) is valid. Then $M(N) \to\infty, L(N) \to \infty$ as $N \to \infty$.
   For the sake of readability we omit the $N$-dependency in the notation of $M(N)$ and $L(N)$ in the following estimates and obtain by straightforward calculations:
   
   \begin{itemize}
   \item $ \displaystyle \delta_N \leq C_1 \sqrt{\ln (M)} \leq (L+1)^{1/4}$ by (\ref{A_delta}) and (\ref{Def_M})
   \\
   \item $\displaystyle (C_0 \delta_N)^{L+1} \leq |A_N|^{1/10}, \quad (C_0 \delta_N)^{L+1} \leq \kappa_N^{-1/5}$ \quad by definition of $L$
\\   
   \item $\displaystyle M L^{L/2} \leq |A_N|^{1/5} |A_N|^{1/10}$  by (\ref{Def_M})  and (\ref{def_L})
\\  
   \item  $\displaystyle M \sqrt{\kappa_N} \leq \kappa_N^{3/10}$ by (\ref{Def_M})
\\  
   \item $ \displaystyle
   \frac{C_0^{L+1} \delta_N^{L+1}}{(L-1)^{\frac{L-1}{2}}}  \leq \frac{ C_0^{L+1} (L+1)^{\frac{L+1}{4}} }{(L-1)^{\frac{L-1}{2}}}
   \leq C^L \frac{ (L+1)^{\frac{L+1}{4}}}{(L+1)^{\frac{L-1}{2}}}
   = C^L (L+1)^{-\frac{L}{4} + \frac{3}{4}} 
   $.
   \end{itemize}
   With the above inequalities we obtain that each term in (\ref{final.1}) and (\ref{final.2}) vanishes as $N \to \infty$. This completes the proof of the Main Theorem (Theorem \ref{MAINTHEO}).
	\end{proof}

\section*{Acknowledgements}
The author would like to thank the unknown referees and Martin Venker for helpful comments and suggestions.

\bibliographystyle{plain}
\bibliography{bibliography}
\end{document}